\g@addto@macro{\UrlBreaks}{\UrlOrds}
\numberwithin{equation}{section}
\providecommand{\noopsort}[1]{}
\def\qed{\unskip\quad \hbox{\vrule\vbox
to 6pt {\hrule width 4pt\vfill\hrule}\vrule} }
\newcommand{\bez}{\nopagebreak\hspace*{\fill}
 \nolinebreak$\qed$\vspace{5mm}\par}
\newtheorem{Th}{Theorem}[section]
\newtheorem{Prop}[Th]{Proposition}
\newtheorem{Lemma}[Th]{Lemma}
\newtheorem{Conj}{Conjecture}
\theoremstyle{definition}
\newtheorem{Remark}[Th]{Remark}
\newtheorem{Cor}[Th]{Corollary}
\newcommand{\beq}{\begin{equation}}
\newcommand{\eeq}{\end{equation}}
\def\scalar(#1,#2){(#1\mid#2)}
\renewcommand{\hat}{\widehat}
\newcommand{\raz}{\mathbbm{1}}
\newcommand{\Leb}{{\mathrm{Leb}}}
\newcommand{\ot}{\otimes}
\newcommand{\ov}{\overline}
\newcommand{\bs}{\mathbb{S}}
\newcommand{\A}{\mathbb{A}}
\newcommand{\R}{{\mathbb{R}}}
\newcommand{\T}{{\mathbb{T}}}
\newcommand{\C}{{\mathbb{C}}}
\newcommand{\Z}{{\mathbb{Z}}}
\newcommand{\N}{{\mathbb{N}}}
\newcommand{\E}{{\mathbb{E}}}
\newcommand{\EE}{{\mathbb{E}}}
\newcommand{\PP}{{\mathbb{P}}}
\newcommand{\vep}{\varepsilon}
\newcommand{\M}{{\cal M}}
\newcommand{\lio}{\boldsymbol{\lambda}}
\let\oldpmod\pmod
\renewcommand{\pmod}[1]{\hspace{-0.15cm}\oldpmod {#1}}
\begin{document}
\vspace*{-3em}
\title{On the local Fourier uniformity problem for small sets}
\author[Kanigowski]{Adam Kanigowski}
\address{Department of Mathematics, University of Maryland, College Park, MD USA}
\email{akanigow@umd.edu}

\author[Lema\'nczyk]{Mariusz Lema\'nczyk}
\address{Faculty of Mathematics and Computer Science, Nicolaus Copernicus University, Chopin street 12/18,
87-100 Toru\'n, Poland}
\email{mlem@mat.umk.pl}

\author[Richter]{Florian K. Richter}
\address{\'Ecole Polytechnique F\'ed\'erale de Lausanne (EPFL), Lausanne, Vaud, Switzerland}
\email{f.richter@epfl.ch}

\author[Ter\"av\"ainen]{Joni Ter\"av\"ainen}

\address{Department of Mathematics and Statistics, University of Turku, 20014 Turku, Finland}
\email{joni.p.teravainen@gmail.com}

\begin{abstract}
We consider vanishing properties of exponential sums of the Liouville function $\lio$ of the form
$$
\lim_{H\to\infty}\limsup_{X\to\infty}\frac{1}{\log X}\sum_{m\leq X}\frac{1}{m}\sup_{\alpha\in C}\bigg|\frac{1}{H}\sum_{h\leq H}\lio(m+h)e^{2\pi ih\alpha}\bigg|=0,
$$
where $C\subset\T$.
The case $C=\T$ corresponds to the local $1$-Fourier uniformity conjecture of Tao, a central open problem in the study of multiplicative functions
with far-reaching number-theoretic applications.
We show that the above holds for any closed set $C\subset\T$ of zero Lebesgue measure. Moreover, we prove that extending this to any set $C$ with non-empty interior is equivalent to the $C=\T$ case, which shows that our results are essentially optimal without resolving the full conjecture.

We also consider higher-order variants. We prove that if the linear phase $e^{2\pi ih\alpha}$ is replaced by a polynomial phase $e^{2\pi ih^t\alpha}$ for $t\geq 2$ then the statement remains true for any set $C$ of upper box-counting dimension $< 1/t$.
The statement also remains true if the supremum over linear phases is replaced with a supremum over all nilsequences coming form a compact countable ergodic subsets of any $t$-step nilpotent Lie group.
Furthermore, we discuss the unweighted version of the local $1$-Fourier uniformity problem, showing its validity for a class of ``rigid''  sets (of full Hausdorff dimension) and proving a density result for all closed subsets of zero Lebesgue measure.
\end{abstract}

\maketitle

\section{Introduction} The aim of this paper is to establish new results concerning the local $t$-Fourier uniformity conjecture over sets of measure zero resulting from recent progress in our understanding of the  Chowla and Sarnak conjectures. Throughout, let $\lio(n)=(-1)^{\Omega(n)}$ denote the Liouville function, where $\Omega(n)$ is the number of prime divisors of $n$ (counted with multiplicities).

\subsection{\texorpdfstring{Local $t$-Fourier uniformity}{Local t-Fourier uniformity}}
A $t$-step nilmanifold is a quotient space $G/\Gamma$, where $G$ is a $t$-step  nilpotent Lie group and $\Gamma$ is a discrete cocompact subgroup of $G$.
For technical reasons, we assume throughout this work that every nilpotent Lie group under consideration is either connected or spanned by the connected component of the identity element and finitely many other group elements.\footnote{This, or similar, restrictions on $G$ are a standard convention when studying nilsystems in ergodic theory and encompasses most relevant examples, see \cite[Subsection 2.1]{Le} or \cite[p.~155]{Ho-Kr} together with Appendix~\ref{a:nowy}.}
For $f\colon \N\to\C$, we write
\[
\mathbb{E}_{m\leq M}f(m)=\frac{1}{M}\sum_{m\leq M}f(m)
\quad\text{and}\quad
\mathbb{E}^{\rm log}_{m\leq M}f(m)=\frac{1}{\log M}\sum_{m\leq M}\frac{f(m)}{m}
\]
for the Ces\`aro and logarithmic averages of $f$, respectively.
The local $t$-Fourier uniformity conjecture of Tao states the following.

\begin{Conj}[{\cite[Conjecture 1.7]{Ta1}}]\label{conj:fourier} Let $t\in \mathbb{N}$. For each $t$-step nilmanifold $G/\Gamma$ and any $f\in C(G/\Gamma)$, we have\footnote{We remark that Tao's original formulation of the conjecture assumes $G$ to be connected and simply connected and $f$ to be Lipschitz continuous.
The restriction on $f$ can be relaxed since the space of Lipschitz functions on $G/\Gamma$ is dense in the space of continuous functions by the Stone--Weierstrass theorem.
Concerning the restriction on $G$, it follows from Proposition~\ref{p:embeddingnilLie} that Conjecture~\ref{conj:fourier} for connected and simply connected $G$ is equivalent to our formulation.
}
\beq\label{unif1}
\lim_{H\to\infty}\limsup_{M\to \infty}\EE_{m\leq M}\sup_{g\in G}\Big|\EE_{h\leq H}\lio(m+h)f(g^h\Gamma)\Big|=0\eeq
or, for  the logarithmic averages,
\beq\label{uniflog1}
\lim_{H\to\infty}\limsup_{M\to \infty}\EE^{\rm log}_{m\leq M}\sup_{g\in G}\Big|\EE_{h\leq H}\lio(m+h)f(g^h\Gamma)\Big|=0.\eeq
\end{Conj}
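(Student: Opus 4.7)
The plan is to attack this statement via the Matomäki--Radziwiłł theorem on short-interval averages of multiplicative functions, combined with an entropy decrement argument in the spirit of Tao's proof of the logarithmic Chowla conjecture in two point correlations. Observe that for \emph{fixed} $g\in G$, the sequence $h\mapsto f(g^h\Gamma)$ is a nilsequence, and for every such fixed nilsequence the $m$-averaged inner quantity $\EE_{h\leq H}\lio(m+h) f(g^h\Gamma)$ tends to zero (on logarithmic average, unconditionally; on Cesàro average in many cases) by Matomäki--Radziwiłł--Tao. The whole difficulty in Conjecture~\ref{conj:fourier} is the presence of the supremum over $g$, which, unlike the fixed-$g$ situation, forces one to control continuum-many nilsequences \emph{simultaneously} for most $m$.

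First, I would attempt to reduce the supremum to a finite problem. Using continuity of $f$ together with Green--Tao--type quantitative equidistribution on the nilmanifold $G/\Gamma$, the supremum over $g\in G$ can be replaced by a supremum over an $\varepsilon$-net $\mathcal{N}_\varepsilon\subset G$ whose cardinality is polynomial in $\varepsilon^{-1}$ (depending on $H$). A crude union bound then reduces matters to showing that for each fixed $g\in \mathcal{N}_\varepsilon$, the set of bad $m$ on which $|\EE_{h\leq H}\lio(m+h)f(g^h\Gamma)|>\delta$ has upper (logarithmic) density $o(1/|\mathcal{N}_\varepsilon|)$ as $H\to\infty$.

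The engine for such concentration would be Tao's entropy decrement method combined with the $U^{t+1}$-inverse theorem of Green--Tao--Ziegler. The multiplicativity of $\lio$, exploited through the relation $\lio(pn)=-\lio(n)$, lets one compare averages over $m$ with averages over $pm$ for primes $p$, and an entropy-pigeonhole argument then forces the short sums of $\lio$ against any $t$-step nilsequence to behave additively and be controlled by Gowers norms. Iterating this with a Furstenberg correspondence principle, one would hope to extract a structured obstruction to uniformity and rule it out via known cases of the Chowla conjecture along logarithmic averages.

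The hard part, and the reason Conjecture~\ref{conj:fourier} is open, is the quantitative gap: Matomäki--Radziwiłł yields only a sub-logarithmic power saving for individual $g$, while the net $\mathcal{N}_\varepsilon$ needed to discretize the supremum grows polynomially in $H$, so a naive union bound fails. One needs either a genuinely uniform short-interval estimate for $\lio$ twisted by \emph{arbitrary} nilsequences---something that goes beyond current technology---or a conceptual replacement of the union bound by a structural dichotomy showing that the worst $g$ must come from a low-complexity subset of $G$. The present paper realises the second strategy in a restricted form by constraining the supremum to lie in a small set (closed of zero Lebesgue measure in the linear case, or of upper box-counting dimension $<1/t$ for polynomial phases of degree $t$), and I would expect the full conjecture to require extending this dichotomy to arbitrary $g\in G$, which at present seems to require a new input either from additive combinatorics or from the ergodic theory of nilsystems.
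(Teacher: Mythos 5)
The statement you were asked to prove is Conjecture~\ref{conj:fourier}, which is an \emph{open conjecture} that the paper does not (and does not claim to) resolve. There is therefore no proof in the paper for your proposal to be compared against. What you have written is not a proof but a survey of the known obstructions, and you are honest about this, so there is nothing to grade as a gap in a purported argument.

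On its merits, your diagnosis is accurate and aligns well with how the paper positions its partial results. The discretize-and-union-bound scheme you sketch --- cover the parameter space by an $\varepsilon$-net, then control the density of bad $m$ for each fixed center via a second-moment argument resting on two-point Chowla --- is essentially the engine behind Theorems~\ref{p:jt1} and~\ref{thm:packingdimension} of the paper, except there the union is taken only over a set $C$ small enough that it admits a cover by $o(H)$ (resp.\ $o(H^{1/t})$) intervals of the relevant scale, which is exactly the regime where the $O(\varepsilon^{-2}/H)$ bound of Lemma~\ref{le:chebyshev} survives the union bound. The quantitative mismatch you identify --- that the full torus needs $\Theta(H)$ centers while the second moment yields only $O(1/H)$ per center --- is precisely why the method stops short of the full conjecture, and Theorem~\ref{th:implication} shows this is no accident: proving the conjecture for any $C$ with nonempty interior is already equivalent to the full $C=\T$ case.

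Two small corrections on attributions. First, for a fixed $g$ and $t\geq 2$, the fact that $\limsup_{M}\mathbb{E}^{\log}_{m\leq M}|\mathbb{E}_{h\leq H}\lio(m+h)f(g^h\Gamma)|\to 0$ as $H\to\infty$ is not a direct consequence of Matom\"aki--Radziwi\l{}\l{} (which handles $t=1$), but rather of the Frantzikinakis--Host logarithmic Sarnak theorem (via~\cite{Fr-Ho}, used here in Lemma~\ref{l:momocount}), or, for finite $C$, of~\cite{Ab-Le-Ru}. Second, the entropy decrement and $U^{t+1}$-inverse theorem you invoke are indeed the tools underlying the inputs the paper relies on (two-point Chowla at almost all scales~\cite{Ta-Te}, the Frantzikinakis--Host theorem), but no one has succeeded in making them yield an estimate for $\sup_{g\in G}$ uniformly in $g$ that would close the polynomial-in-$H$ gap; this remains the central open difficulty, as you correctly state.
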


As shown by Tao~\cite[Theorem 1.8]{Ta1}, the validity of the logarithmic local $t$-Fourier uniformity conjecture~\eqref{uniflog1} for all $t\geq1$ is equivalent to two important conjectures in multiplicative number theory, namely the logarithmic Chowla conjecture  on autocorrelations of the Liouville function and the logarithmically averaged version of Sarnak's M\"obius orthogonality conjecture. We recall that the logarithmically averaged Chowla conjecture is the statement that for any $k\in \mathbb{N}$ and any natural numbers $h_1<\ldots<h_k$, we have
\begin{align}\label{eq:logChowla}
 \lim_{M\to \infty}\mathbb{E}_{m\leq M}^{\log}\lio(m+h_1)\cdots \lio(m+h_k)=0.
\end{align}
The logarithmically averaged Sarnak conjecture in turn is the statement that for any deterministic sequence $a\colon\mathbb{N}\to \mathbb{C}$, we have
\begin{align*}
\lim_{M\to \infty}\mathbb{E}_{m\leq M}^{\log}\lio(m)a(m)=0.
\end{align*}
See for example the survey~\cite{Fe-Ku-Le} for a discussion of these conjectures and for some of the progress made towards them.

\subsection{Local 1-Fourier uniformity for small sets}
The local $t$-Fourier uniformity problem is still open, and even the case $t=1$  seems to be out of reach using present techniques. By Fourier expansion, the local $1$-Fourier uniformity problem is equivalent to
\beq\label{unif5}\lim_{H\to\infty}\limsup_{M\to \infty}\EE_{m\leq M}\sup_{\alpha\in \T}\big|\EE_{h\leq H}\lio(m+h)e(h\alpha)\big|=0,\eeq
where we use the standard notation $e(t)=e^{2\pi it}$ for $t\in\R$.
This was proved in the regime $H\geq M^{\varepsilon}$ for any fixed $\varepsilon>0$ in~\cite{Ma-Ra-Ta}, and improved to $H\geq \exp((\log M)^{\theta})$ for any fixed $\theta>5/8$ in~\cite{Ma-Ra-Ta-Te-Zi}, and very recently further to $H\geq \exp(C(\log M)^{1/2}(\log \log M)^{1/2})$ for some $C>0$ in~\cite{Walsh}.

Until a few years ago,~\eqref{unif5} was known to hold only in the case when the supremum in $\alpha$ is taken over a finite set, which follows from the work of Matom\"aki--Radziwi\l{}\l{}--Tao~\cite{MRT}.  McNamara~\cite{McN} was the first to improve on this result in the logarithmic case by showing that for all (closed\footnote{By continuity, the problem of taking $\sup_{C}$ is the same as taking $\sup_{\overline{C}}$, so all the results in the paper are about closed subsets.}) sets $C\subset\T$ of box-counting dimension $<1$, we have
\beq\label{unif5b}\lim_{H\to\infty}\limsup_{M\to \infty}\EE^{\rm log}_{m\leq M}\sup_{\alpha\in C}\big|\EE_{h\leq H}\lio(m+h)e(h\alpha)\big|=0.\eeq
McNamara also gave an example of $C$ satisfying~\eqref{unif5b} and of full Hausdorff dimension.  A larger class of $C$ satisfying~\eqref{unif5b} was provided by Huang--Xu--Ye~\cite{Hu-Xu-Ye} by considering the class of closed subsets whose packing dimension is $<1$. Additionally, they  provided
the first example of an infinite, closed, and uncountable subset $C$ of $\T$ for which
the non-logarithmic version of~\eqref{unif5b} holds. More precisely, they showed
\beq\label{unif6}\lim_{H\to\infty}\limsup_{M\to \infty}\EE_{m\leq M}\sup_{\alpha\in C}\big|\EE_{h\leq H}\lio(m+h)e(h\alpha)\big|=0,\eeq
for all sets $C$ of packing dimension~0.
We note that all sets considered in~\cite{Hu-Xu-Ye} and~\cite{McN} are closed with zero Lebesgue measure.

Our first result is the following.
\begin{Cor}\label{p:uniflog1} For each closed $C\subset\T$ with ${\rm Leb}(C)=0$, the logarithmic local 1-Fourier uniformity~\eqref{unif5b} holds.\end{Cor}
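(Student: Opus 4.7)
The plan is to combine a Lipschitz-smoothing argument with the two-point case of the logarithmic Chowla conjecture, $\lim_{M\to\infty}\EE^{\rm log}_{m\le M}\lio(m)\lio(m+k)=0$ for every $k\ne 0$, which is a theorem of Tao. Writing $P_{m,H}(\alpha):=\EE_{h\le H}\lio(m+h)e(h\alpha)$, I would first exploit that $P_{m,H}(\cdot)$ is a trigonometric polynomial of degree $\le H$ with $\|P_{m,H}\|_\infty\le 1$, hence $2\pi H$-Lipschitz on $\T$. For $\delta>0$ I would set $\phi_\delta:=\tfrac{1}{\delta}\raz_{[-\delta/2,\delta/2]}$ and $C^\delta:=\{\beta\in\T:\mathrm{dist}(\beta,C)\le\delta/2\}$. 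The Lipschitz bound gives $|P_{m,H}(\alpha)|\le\pi H\delta + |(P_{m,H}*\phi_\delta)(\alpha)|$, and Jensen applied to the convolution yields $|(P_{m,H}*\phi_\delta)(\alpha)|^2 \le \tfrac{1}{\delta}\int_{\alpha-\delta/2}^{\alpha+\delta/2}|P_{m,H}(\beta)|^2\,d\beta$. Taking $\sup_{\alpha\in C}$ enlarges the range of integration to $C^\delta$, and a further Cauchy--Schwarz to move $\EE^{\rm log}_{m\le M}$ inside the square root yields
\[
\EE^{\rm log}_{m\le M}\sup_{\alpha\in C}|P_{m,H}(\alpha)| \;\le\; \pi H\delta + \left(\tfrac{1}{\delta}\int_{C^\delta}\EE^{\rm log}_{m\le M}|P_{m,H}(\beta)|^2\,d\beta\right)^{1/2}.
\]

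Next I would bound the inner $L^2$ average uniformly in $\beta$. Expanding the square,
\[
\EE^{\rm log}_{m\le M}|P_{m,H}(\beta)|^2 = \tfrac{1}{H^2}\sum_{h_1,h_2\le H}e((h_1-h_2)\beta)\,\EE^{\rm log}_{m\le M}\lio(m+h_1)\lio(m+h_2);
\]
the diagonal $h_1=h_2$ contributes exactly $1/H$, while the off-diagonal part is, for each fixed $H$, a finite sum with unimodular weights whose coefficients all tend to~$0$ as $M\to\infty$ by two-point logarithmic Chowla. Thus $\limsup_{M\to\infty}\EE^{\rm log}_{m\le M}|P_{m,H}(\beta)|^2 = 1/H$ \emph{uniformly} in $\beta\in\T$, and integrating this over $C^\delta$ yields
\[
\limsup_{M\to\infty}\EE^{\rm log}_{m\le M}\sup_{\alpha\in C}|P_{m,H}(\alpha)| \;\le\; \pi H\delta + \sqrt{\tfrac{|C^\delta|}{H\delta}}.
\]

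The last step is to exploit $\Leb(C)=0$ via a judicious choice of $\delta=\delta(H)$. Since $C$ is closed, $|C^\delta|\downarrow 0$ as $\delta\to 0^+$, so choosing $\delta=c/H$ for any fixed $c>0$ makes the first term equal to $\pi c$ and the second term $\sqrt{|C^{c/H}|/c}\to 0$ as $H\to\infty$. Hence $\lim_{H\to\infty}\limsup_{M\to\infty}\EE^{\rm log}_{m\le M}\sup_{\alpha\in C}|P_{m,H}(\alpha)|\le\pi c$ for every $c>0$, which gives~\eqref{unif5b}. In my view the main subtlety is choosing the right geometric invariant of $C$: the earlier works~\cite{McN,Hu-Xu-Ye} bound the supremum via a discrete $\delta$-net, which requires the covering number of $C$ to be $o(H)$ at scale $\delta\sim 1/H$ and so forces the upper box-counting or packing dimension of $C$ to be strictly less than~$1$. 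Replacing the $\delta$-net by an $L^2$-smoothing on $C^\delta$ substitutes the covering number by the Lebesgue measure $|C^\delta|$, and the merely qualitative fact that $|C^\delta|\to 0$, which $\Leb(C)=0$ provides, is precisely what is needed to treat all closed null sets.
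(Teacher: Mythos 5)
Your proof is correct, and it takes a genuinely different route from the paper's argument, even though the underlying ingredients (two-point logarithmic Chowla and the second moment in~$m$) are the same. The paper proves Corollary~\ref{p:uniflog1} by first establishing the Ces\`aro variant Theorem~\ref{p:jt1} at almost all scales (using the Tao--Ter\"av\"ainen density two-point Chowla, Lemma~\ref{le:two-point}), and then deducing the logarithmic statement by partial summation. Within that proof, the mechanism for handling the supremum over $C$ is an explicit $\delta$-net: $C$ is covered by $J\ll\Leb(C)^{3/4}H$ intervals of length $\Leb(C)^{1/4}/H$ (Lemma~\ref{le:closed} turns $\Leb(C)=0$ into the covering bound), then $\sup_{\alpha\in C}$ is replaced by a $\max$ over the $J$ net points up to an $O(\Leb(C)^{1/4})$ error, and a Chebyshev bound (Lemma~\ref{le:chebyshev}) is summed over the net points via the union bound. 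You instead mollify $P_{m,H}$ by convolving with $\phi_\delta$, use the Bernstein-type Lipschitz bound to absorb the mollification error, and then pass via Jensen to the $L^2$ integral over the $\delta$-neighbourhood $C^\delta$; after exchanging the $m$-average with the $\beta$-integral and applying two-point logarithmic Chowla (Tao's original theorem, uniformly in $\beta$ for fixed $H$), the $L^2$ mass of $P_{m,H}$ contributes $1/H$ on the diagonal, and the role of the covering number $J$ is played directly by $\Leb(C^\delta)$, which tends to $0$ by continuity of Lebesgue measure from above. This replaces the union bound by an integral and avoids picking net points entirely; it also works natively with the logarithmic average, sidestepping the almost-all-scales reduction and partial summation. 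What the paper's route buys is the stronger Ces\`aro statement at almost all scales (Theorem~\ref{p:jt1}) and, via the quantitative covering (Lemma~\ref{le:closed}), the explicit $\ll\Leb(C)^{1/4}$ bound of Theorem~\ref{p:jt1b}, which is then reused in the proof of Theorem~\ref{th:assum}; your argument gives the qualitative corollary cleanly but does not immediately produce these. (The paper also gives a second, dynamical proof of the corollary in Section~5 via singular spectral type and the logarithmic strong LOMO property, which is unrelated to either of these two number-theoretic arguments.)
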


We also show (in Section~\ref{sec:implication}) that the restriction to sets of measure zero in Corollary~\ref{p:uniflog1} is crucial, as relaxing this condition somewhat would lead to a 
resolution of the full logarithmic local $1$-Fourier uniformity conjecture. 

\begin{Th}\label{th:implication} Suppose that there exists a set $C\subset \mathbb{T}$ with non-empty interior such that the logarithmic local $1$-Fourier uniformity~\eqref{unif5b} holds for $C$. Then the same holds for $C=\mathbb{T}$.
\end{Th}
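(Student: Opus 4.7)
The plan is to argue by contrapositive. Suppose the logarithmic local $1$-Fourier uniformity fails for $\T$: there exist $\delta>0$ and sequences $H_n,M_n\to\infty$ with
$$
\EE^{\log}_{m\le M_n}\sup_{\alpha\in\T}|S_m(\alpha)|\ge\delta,\qquad S_m(\alpha):=\EE_{h\le H_n}\lio(m+h)e(h\alpha).
$$
I would aim to deduce the analogous failure on $C$, contradicting the hypothesis of the theorem.

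Since $C$ has non-empty interior, fix an interval $I\subset C$ of length $\eta>0$ and an integer $q\ge 1/\eta$ so that the $q$ translates $j/q+[0,1/q]$ for $j=0,\dots,q-1$ cover $\T$, each contained in a suitable shift of $I$. Writing $\alpha\in\T$ as $\alpha=j/q+\gamma$ with $\gamma\in[0,1/q]$ and splitting the $h$-sum in $S_m(\alpha)$ by residues modulo $q$ yields
$$
S_m(\alpha)=\frac{1}{q}\sum_{r=0}^{q-1}e\!\left(\tfrac{rj}{q}\right)e(r\gamma)\,U_r(m,q\gamma)+O(q/H_n),
$$
where $U_r(m,\beta):=\EE_{a\le H_n/q}\lio(m+r+aq)e(a\beta)$ is the local Fourier sum of $\lio$ along the arithmetic progression of residue $r$ and modulus $q$. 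Taking the sup over $\alpha\in\T$ corresponds to taking sup over $\gamma\in[0,1/q]$ together with the additive characters $j\mapsto e(rj/q)$ of $\Z/q\Z$, and a standard character-sum estimate then gives
$$
\sup_{\alpha\in\T}|S_m(\alpha)|\;\lesssim_{q}\;\max_{r}\sup_{\beta\in\T}|U_r(m,\beta)|.
$$

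To close the reduction, each $U_r(m,\cdot)$ must be controlled by the hypothesized uniformity on $C$. Using the complete multiplicativity of $\lio$ together with a Dirichlet-character expansion of the indicator of a residue class modulo $q$, one rewrites $U_r(m,\beta)$ as a finite linear combination of values $S_m(\beta'+j'/q)$ at shifted frequencies $\beta'+j'/q$ for various $j'$; by carefully selecting $I\subset C$ and the modulus $q$ (shrinking $I$ slightly if necessary inside $C$), each shifted frequency can be arranged to lie inside $C$, so that the assumed uniformity on $C$ applies and yields the desired contradiction $\EE^{\log}_m\sup_{\alpha\in\T}|S_m(\alpha)|\to 0$. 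The main obstacle is precisely this absorption of the rational shifts into $C$: since $C$ contains an open interval, small shifts by multiples of $1/q$ of an interior sub-interval remain inside $C$ after a slight shrinking, whereas for a set of positive measure but empty interior no such absorption is possible. This is exactly where the non-empty interior hypothesis is essential and where the argument would break down for the analogous implication conjectured for merely positive-measure sets.
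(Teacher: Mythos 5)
Your approach has a genuine gap that the paper itself anticipates and explicitly disclaims as a workable route. The core move you propose — covering $\mathbb{T}$ by rational translates $I+b/q$ of the interior interval $I\subset C$, so that $\sup_{\mathbb{T}}|S_m|\le\max_j\sup_{\alpha\in I}|S_m(\alpha+b_j/q)|$ — requires you to control $\mathbb{E}_m^{\log}\sup_{\alpha\in C}|S_m(\alpha+b/q)|$ from the hypothesis $\mathbb{E}_m^{\log}\sup_{\alpha\in C}|S_m(\alpha)|\to0$. Expanding $e(hb/q)$ as a $q$-periodic sequence and writing $\raz_{h\equiv a\pmod q}$ via the complete multiplicativity of $\lio$ forces you to restrict $m$ to a residue class and then to expand $\raz_{m+h\equiv 1\pmod q}$ into Dirichlet characters evaluated at $m+h$. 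At that point you are left needing the hypothesis for \emph{twisted} sums $\mathbb{E}_h\lio(m+h)\chi(m+h)e(h\alpha)$, not just for $\chi=1$. This is exactly the content of Theorem~\ref{th:assum} (which needs the character-twisted hypothesis) and the paper explicitly states that "we cannot prove a rigorous implication from the case $\chi=1$ to the general case." Your remark that this is "where the non-empty interior hypothesis is essential" misdiagnoses the difficulty: the obstacle is the character twist, and shrinking $I$ or choosing $q$ cannot remove it. Note also that your decomposition of $S_m(\alpha)$ into $U_r(m,q\gamma)$ and back into $S_m(\beta/q+j/q)$ is circular: as $\beta$ varies over $\mathbb{T}$, the frequencies $\beta/q+j/q$ cover $[j/q,(j+1)/q]$ for each $j$, hence collectively cover all of $\mathbb{T}$, so $\sup_\beta|U_r(m,\beta)|$ is controlled by $\sup_{\mathbb{T}}|S_m|$ and nothing is gained.

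The paper's actual argument avoids additive shifts entirely and instead exploits the multiplicative structure via dilations. It fixes $[a,b]\subset C$ and uses Elliott's inequality to show that, for all but a sparse set of primes $p\le P$, $\mathbb{E}_{x<n\le x+H}\lio(n)e(\alpha(x)n)$ is close to $-\mathbb{E}_{x/p<m\le(x+H)/p}\lio(m)e(p\alpha(x)m)$; then the dilation invariance of logarithmic averages together with the hypothesis on $[a,b]\subset C$ kills the dilated sum whenever $p\alpha(x)\in[a,b]\pmod 1$. The rest is a dichotomy: either the weighted count of such good primes $p$ is not too small (the set $\mathcal{X}_1$, handled directly by the restricted Elliott bound), or it is small (the set $\mathcal{X}_2$), in which case Lemma~\ref{le:discrepancy} shows $\alpha(x)$ is close to a rational with bounded denominator, and the Matom\"aki--Radziwi\l{}\l{}--Tao short-sum estimate finishes. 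The crucial distinction from your approach is that replacing $\alpha$ by $p\alpha$ is a multiplicative shift, which interacts with $\lio$ through complete multiplicativity without generating character twists — whereas replacing $\alpha$ by $\alpha+b/q$ is an additive shift, which does generate them.
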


We will also show in Theorem~\ref{th:assum} below that a slight extension of Corollary~\ref{p:uniflog1} (allowing Dirichlet character twists, which we can handle with the same argument) cannot be extended to \emph{any} positive measure set without settling the logarithmic local $1$-Fourier uniformity conjecture in full. 

Corollary~\ref{p:uniflog1} is a special case of Theorem~\ref{p:jt1} below which deals with Ces\`aro averages instead of logarithmic averages. To state this theorem, we introduce the following notation.
For a set $\mathcal{M}=\{M_1,M_2,M_3,\ldots\}\subset \mathbb{N}$ with $M_1<M_2<M_3<\ldots$ and a function $f\colon \mathcal{M}\to \mathbb{C}$,  write
\begin{align*}
\limsup_{\substack{M\in \mathcal{M}\\M\to \infty}}f(M)=\limsup_{i\to \infty}f(M_i)
\quad\text{and}\quad
  \lim_{\substack{M\in \mathcal{M}\\M\to \infty}}f(M)=\lim_{i\to \infty}f(M_i),
\end{align*}
where the latter is only defined when the limit on the right-hand side exists.

\begin{Th}\label{p:jt1} There exists a set $\mathcal{M}\subset \mathbb{N}$ of logarithmic density\footnote{The logarithmic density $\delta(\mathcal{M})$ of a set $\mathcal{M}\subset \mathbb{N}$ is $\lim_{M\to \infty}\mathbb{E}_{m\leq M}^{\log}\raz_{\mathcal{M}}(m)$ (when this limit exists).} $1$ such that the following holds.
Let $C\subset\T$ be any closed set with $\Leb(C)=0$. Then we have
\begin{align}\label{eq:cesaro}
\lim_{H\to \infty}\limsup_{\substack{M\in \mathcal{M}\\M\to \infty}}\mathbb{E}_{m\leq M}\sup_{\alpha\in C}\Big|\EE_{m\leq h< m+H}\lio(h)e(h\alpha)\Big|=0.
\end{align}
\end{Th}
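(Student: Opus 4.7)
The plan is to derive Theorem~\ref{p:jt1} from Tao's logarithmically averaged two-point Chowla theorem combined with a Bernstein-type maximal inequality that reduces $\sup_{\alpha\in C}$ to an $L^2$-integral of $|f_m|^2$, where $f_m(\alpha):=\EE_{m\leq h<m+H}\lio(h)e(h\alpha)$. The crucial feature of this route is that the set $\cm$ can be built purely from Liouville pair correlations, so a single $\cm$ works uniformly for every closed $C$ with $\Leb(C)=0$; a naive log-to-Ces\`aro transfer applied directly to Corollary~\ref{p:uniflog1} would in contrast produce a $\cm$ depending on $C$.

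To construct $\cm$: Tao's theorem gives $\EE^{\rm log}_{m\leq M}\lio(m)\lio(m+k)\to 0$ for every $k\geq 1$, and a standard extraction followed by diagonalisation over $k\in\N$ yields a set $\cm\subset\N$ of logarithmic density $1$ such that $\EE_{m\leq M}\lio(m)\lio(m+k)\to 0$ as $M\to\infty$ in $\cm$ for every fixed $k$; this $\cm$ is fixed independently of $C$. Now let $C\subset\T$ be closed with $\Leb(C)=0$ and fix $\delta>0$: by outer regularity, cover $C$ by finitely many open intervals of total length $\leq\delta/2$, and for $H$ sufficiently large let $C^*$ be the $(1/(8H))$-thickening of this cover, so that $\Leb(C^*)\leq\delta$. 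Since $|f_m|^2$ is a trigonometric polynomial of degree $\leq 2H$, Bernstein's inequality yields, with an absolute constant $A>0$,
$$\sup_{\alpha\in C}|f_m(\alpha)|^2\;\leq\;AH\int_{C^*}|f_m(\alpha)|^2\,d\alpha.$$

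It remains to control the right-hand side. Expanding
$$|f_m(\alpha)|^2\;=\;\sum_{|k|<H}b_k(m)\,e(k\alpha),\qquad b_k(m)=\frac{1}{H^2}\sum_{\substack{m\leq h_1,h_2<m+H\\ h_1-h_2=k}}\lio(h_1)\lio(h_2),$$
and integrating against $\raz_{C^*}$, the $k=0$ term contributes exactly $\Leb(C^*)/H\leq\delta/H$, while each $k\neq 0$ term equals $\widehat{\raz_{C^*}}(-k)\,\EE_{m\leq M}b_k(m)$ with $|\widehat{\raz_{C^*}}(-k)|\leq\delta$. For each fixed $k\neq 0$, exchanging the order of summation shows that $\EE_{m\leq M}b_k(m)$ equals, up to an $O(1/M)$ boundary error, a factor in $(0,1/H]$ times $\EE_{h\leq M}\lio(h)\lio(h+k)$, which tends to $0$ as $M\to\infty$ in $\cm$ by the defining property of $\cm$. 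Since the sum is over only $O(H)$ nonzero values of $k$, for each fixed $H$ the whole contribution vanishes along $\cm$, and therefore
$$\limsup_{\substack{M\in\cm\\ M\to\infty}}\EE_{m\leq M}\sup_{\alpha\in C}|f_m(\alpha)|^2\;\leq\;AH\cdot\frac{\delta}{H}=A\delta.$$
Cauchy--Schwarz then replaces $|f_m|^2$ by $|f_m|$, and letting first $H\to\infty$ and then $\delta\to 0$ completes the proof.

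The main obstacle, besides a careful verification of the Bernstein reduction, is uniformity of $\cm$ across all closed $C$ of measure zero. This is circumvented by the Fourier expansion: $C$ enters the estimate only through the finitely many bounded coefficients $\widehat{\raz_{C^*}}(-k)$, while the arithmetic input is carried by the single fixed countable family of Liouville pair correlations used to define $\cm$.
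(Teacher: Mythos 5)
Your approach is genuinely different from the paper's. The paper covers $C$ by $J\ll\Leb(C)^{3/4}H$ intervals of length $\Leb(C)^{1/4}/H$, uses $e(\beta)=1+O(|\beta|)$ to discretize, and applies Chebyshev plus a union bound over the $J$ points (Lemmas~\ref{le:chebyshev} and~\ref{le:closed}). You instead perform a direct Fourier-analytic reduction: dominate $\sup_C$ by an integral over a thickened set $C^*$ and expand $|f_m|^2$, letting two-point Chowla kill all nonzero frequencies. This is an elegant variation, and both routes construct $\mathcal{M}$ from the same input (the Tao--Ter\"av\"ainen almost-all-scales two-point estimate, which is precisely Lemma~\ref{le:two-point}), so the uniformity of $\mathcal{M}$ in $C$ works the same way in both proofs.

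However, the step you label ``Bernstein's inequality'' is not Bernstein's inequality and, as stated, its validity is unclear. Classical Bernstein bounds $\|P'\|_\infty\ll H\|P\|_\infty$; applying it to $f_m$ (with $\|f_m\|_\infty\le 1$) one obtains $|f_m(\alpha)|\ge |f_m(\alpha_0)|/2$ on an interval of length $\asymp |f_m(\alpha_0)|/H$ around $\alpha_0$, hence only
\[
\sup_{\alpha\in C}|f_m(\alpha)|^2\ \ll\ \Bigl(H\int_{C^*}|f_m|^2\Bigr)^{2/3},
\]
a $2/3$-power version rather than the linear one. The linear local $L^\infty$--$L^1$ Nikolskii bound you assert does not follow from Bernstein by the usual argument, because the width of the interval on which $|f_m|$ remains $\gtrsim |f_m(\alpha_0)|$ shrinks proportionally with $|f_m(\alpha_0)|$, not at a fixed scale $1/H$; the ratio $P(\alpha_0)/\bigl(H\int_{I_0}P\bigr)$ picked up a factor $|f_m(\alpha_0)|^{-1}$ in this argument, which is uncontrolled. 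This is a genuine gap in the write-up as given. Fortunately, the weaker $2/3$-power bound suffices: by Jensen's inequality,
\[
\limsup_{\substack{M\in\mathcal{M}\\ M\to\infty}}\ \EE_{m\le M}\sup_{\alpha\in C}|f_m(\alpha)|^2\ \ll\ \Bigl(H\limsup_{\substack{M\in\mathcal{M}\\ M\to\infty}}\EE_{m\le M}\int_{C^*}|f_m|^2\Bigr)^{2/3}\ \ll\ \delta^{2/3},
\]
and the rest of your argument (Cauchy--Schwarz, then $H\to\infty$, then $\delta\to0$) goes through unchanged with a worse but still vanishing power of $\delta$. So the strategy is sound, but you should either replace the claimed inequality by its correct $2/3$-power form (or by a ``linear plus $O(\varepsilon)$'' form proved by convolving with a Fej\'er kernel of degree $\sim H/\varepsilon$), or supply a proof of the linear local Nikolskii estimate.
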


Corollary~\ref{p:uniflog1} follows from Theorem~\ref{p:jt1} by partial summation\footnote{Indeed, by partial summation, for any bounded sequence $a\colon \N\to \R$, we have $$\limsup_{M\to \infty}\frac{1}{\log M}\sum_{m\leq M}\frac{a(m)}{m}=\limsup_{M\to \infty}\frac{1}{\log M}\sum_{k\leq M}k^{-2}\sum_{m\leq k}a(m).$$ The claim follows by applying this with $a(m)$ being the sequence inside the averaging operator in~\eqref{eq:cesaro}}. To obtain the  Ces\`aro statement~\eqref{unif6}, we need to put some further restrictions on $C$.

\begin{Th}\label{p:unif1} Assume that $C\subset\T$ is a closed set for which there is a sequence $(q_n)$ of natural numbers such that\footnote{Given $t\in \mathbb{R}$, $\|t\|$ stands for the distance of $t$ to the nearest integer(s).}
\beq\label{sztywnosc}\lim_{n\to \infty}\|q_n\alpha\|=0\text{ for  each }\alpha\in C\eeq
and
\beq\label{bpv}(q_n)\text{ has bounded prime volume, i.e. }\sup_n\sum_{\substack{p\in \PP\\ p\mid q_n}}\frac1p<+\infty.\eeq
Then~\eqref{unif6} holds. \end{Th}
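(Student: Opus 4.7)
\emph{Strategy.} The plan is to exploit the two hypotheses in complementary ways: the rigidity condition~\eqref{sztywnosc} will provide, via a compactness argument, a finite family of rational approximants for the set $C$, and the bounded prime volume hypothesis~\eqref{bpv} will then allow a Matom\"aki--Radziwi{\l}{\l}-type estimate for $\lio$ on arithmetic progressions to be applied uniformly across the relevant moduli.

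\emph{Reduction to finitely many rational phases.} Since $C$ is compact and each map $\alpha\mapsto\|q_n\alpha\|$ is continuous, rigidity implies that for every $\epsilon>0$ the open sets $U_n(\epsilon):=\{\alpha\in\T:\|q_n\alpha\|<\epsilon\}$ cover $C$ once $n$ is sufficiently large, so by compactness there exist finitely many indices $n_1,\ldots,n_k$ with $C\subseteq U_{n_1}(\epsilon)\cup\cdots\cup U_{n_k}(\epsilon)$. For $\alpha\in U_{n_j}(\epsilon)$ one writes $\alpha=a/q_{n_j}+\theta$ with $|\theta|\leq\epsilon/q_{n_j}$, and the triangle inequality gives
\begin{equation*}
\Big|\EE_{h\leq H}\lio(m+h)e(h\alpha)-\EE_{h\leq H}\lio(m+h)e(ha/q_{n_j})\Big|\leq 2\pi H\epsilon/q_{n_j}.
\end{equation*}
With $\epsilon=\epsilon(H)$ chosen suitably small, it then suffices to bound the averaged maximum
\begin{equation*}
\EE_{m\leq M}\max_{1\leq j\leq k}\max_{0\leq a<q_{n_j}}\Big|\EE_{h\leq H}\lio(m+h)e(ha/q_{n_j})\Big|.
\end{equation*}

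\emph{Rational phases and the main obstacle.} For each rational $a/q$ the inner sum splits along residue classes modulo $q$, and Plancherel yields
\begin{equation*}
\max_{0\leq a<q}\Big|\EE_{h\leq H}\lio(m+h)e(ha/q)\Big|^2\leq\frac{q}{H^2}\sum_{r=0}^{q-1}\Big|\sum_{\substack{h\leq H\\h\equiv r\,(q)}}\lio(m+h)\Big|^2,
\end{equation*}
whose expectation in $m$ is $o(1)$ as $H/q\to\infty$ by a short-interval Matom\"aki--Radziwi{\l}{\l}-type theorem for $\lio$ on arithmetic progressions; the bounded prime volume hypothesis~\eqref{bpv} is precisely what guarantees this MR input holds quantitatively uniformly as the modulus $q=q_{n_j}$ varies. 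The hard part is reconciling the two opposing size demands on the moduli: the approximation step prefers $q_{n_j}$ large so that $H\epsilon/q_{n_j}$ is small, while the MR input requires $q_{n_j}\ll H$. I therefore plan to choose the $q_{n_j}$ in an intermediate range such as $q_{n_j}\in[H^{\delta},H^{1/2}]$, refining the compactness cover to draw its elements from this window and exploiting that every sub-sequence of $(q_n)$ still satisfies both~\eqref{sztywnosc} and~\eqref{bpv}. Without~\eqref{bpv} the MR-on-APs estimate would degrade as $q$ grows and swamp the reduction; without~\eqref{sztywnosc} the initial reduction to a finite family of rationals would fail entirely.
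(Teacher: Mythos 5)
Your proposed reduction has a genuine gap at the approximation step, and I do not see how to repair it; the paper itself takes an entirely different, dynamical, route.

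\emph{Where the reduction breaks.} Fix $\epsilon>0$ and apply compactness to get $C\subseteq U_{n_1}(\epsilon)\cup\cdots\cup U_{n_k}(\epsilon)$. The moduli $q_{n_1},\ldots,q_{n_k}$ are then \emph{fixed} once $\epsilon$ is fixed, and your error term $2\pi H\epsilon/q_{n_j}$ must tend to $0$ in the eventual limit $H\to\infty$ — but for fixed $\epsilon$ and fixed moduli it tends to infinity. So you must let $\epsilon=\epsilon(H)\to 0$, which forces a fresh cover for each $H$ whose moduli are completely uncontrolled. Hypothesis \eqref{sztywnosc} only gives pointwise convergence $\|q_n\alpha\|\to 0$ with no rate; the rate can be arbitrarily slow and can degenerate as $\alpha$ ranges over $C$ (Dini's theorem does not apply — there is no monotonicity). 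In particular your plan to ``refine the compactness cover to draw its elements from the window $[H^\delta,H^{1/2}]$'' has no justification: the sequence $(q_n)$ is given, may be lacunary with arbitrary gap ratios, and the indices $n$ for which $\|q_n\alpha\|<\epsilon$ holds across all of $C$ may correspond to moduli $q_n$ far larger than any power of $H$. Nothing in the hypotheses forces even a single $q_n$ into the range you need. In addition, the ingredient you invoke — a Matom\"aki–Radziwi{\l}{\l}-type estimate for $\lio$ in arithmetic progressions, with Ces\`aro averaging over all scales $M$, uniform in $q$ as $q$ grows with $H$, and robust to a union bound over $\Theta(q)$ characters — is not an available black box, and your use of \eqref{bpv} to supply that uniformity is speculative.

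\emph{What the paper actually does.} The paper's proof is dynamical, not Fourier-analytic. Lemma~\ref{l:ri2} shows the rigidity hypothesis \eqref{sztywnosc} makes every $T$-invariant measure on $C\times\T$ (with $T(x,y)=(x,x+y)$) a $(q_n)$-rigid measure-theoretic system; Lemma~\ref{l:ri3} then invokes the Kanigowski--Lema\'nczyk--Radziwi{\l}{\l} theorem~\cite{Ka-Le-Ra} — and this is exactly where the bounded prime volume hypothesis \eqref{bpv} is used — to conclude that $(C\times\T,T)$ has the strong LOMO property; Lemma~\ref{l:ciagi} converts LOMO back to the averaged supremum form \eqref{unif6}. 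Passing to invariant measures is precisely what dissolves the rate problem you run into: rigidity of a measure is an $L^2$ statement and needs neither a rate nor uniformity over points of $C$, whereas your argument needs quantitative control it cannot extract from \eqref{sztywnosc}.
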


We will show in Appendix~\ref{App:B} that there exist sets $C\subset \T$ of full Hausdorff dimension satisfying~\eqref{sztywnosc} and~\eqref{bpv}.

\subsection{\texorpdfstring{Local polynomial $t$-Fourier uniformity for small sets}{Local polynomial t-Fourier uniformity for small sets}}

We now consider the $t\geq 2$ case of Conjecture~\ref{conj:fourier}, with the supremum over $g$ being taken over a sparse set. An important special case is that when $G/\Gamma$ is isomorphic to a torus $\T^d$, $d\in\N$; then, by Fourier expansion, the claim is equivalent to
\beq\label{unif5c}\lim_{H\to\infty}\limsup_{M\to \infty}\EE_{m\leq M}\sup_{\deg(P)\leq t}\big|\EE_{h\leq H}\lio(m+h)e(P(h))\big|=0,\eeq
where the supremum is over polynomials $P(X)\in \mathbb{R}[X]$ of degree at most $t$.
See~\cite{Ma-Ra-Ta-Te-Zi} for a result establishing this in the regime $H\geq \exp((\log M)^{\theta})$, for any fixed $\theta>5/8$. Less is known in the case $t\geq 2$ compared to the $t=1$ case about statements of the form
\beq\label{unif5d}\lim_{H\to\infty}\limsup_{M\to \infty}\EE^{\rm log}_{m\leq M}\sup_{\alpha \in C}\big|\EE_{h\leq H}\lio(m+h)e(\alpha h^t)\big|=0.\eeq
To our knowledge, the only previous result here is the case where $C$ is finite; this follows from~\cite[Theorem 5]{Ab-Le-Ru}. We can improve on this by showing that any closed set $C$ of box-counting dimension $<1/t$ has this property. Recall that the upper box-counting dimension of a set $C\subset \T$ is defined as the infimum over all $s\geq 0$ such that
\begin{align*}
 \limsup_{J\to \infty}\frac{\min\{k\geq 1\colon \,\, C\,\textnormal{ can be covered by }\, k\, \textnormal{ intervals of length}\leq 1/J\}}{J^{s}}=0.
\end{align*}
The lower box-counting dimension of $C$ is defined similarly with $\liminf$ in place of $\limsup$, but we will not make use of this notion.

\begin{Th}\label{thm:packingdimension} Let $t\geq 2$. There exists a set $\mathcal{M}\subset \mathbb{N}$ of logarithmic density $1$ such that for any closed $C\subset \T$ of upper box-counting dimension~$<1/t$,
we have
\beq
\label{eqn_packdim_1}\lim_{H\to\infty}\limsup_{\substack{M\in \mathcal{M}\\M\to \infty}}~\EE_{m\leq M}\sup_{\alpha\in C}\big|\EE_{h\leq H}\lio(m+h)e(\alpha h^t)\big|=0.\eeq
\end{Th}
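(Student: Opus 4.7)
The plan is to combine a simple discretization of $C$ with a uniform-in-$\alpha$ $L^2$ bound for the short Liouville sum $\mathbb{E}_{h\leq H}\lio(m+h)e(\alpha h^t)$, averaged over $m$. The hypothesis on the upper box-counting dimension is exactly what makes this scheme work: it lets me replace $C$ by a polynomial-size net at a scale so fine that the phase $e(\alpha h^t)$ is essentially constant in each cell, yet the net size is still a power saving against~$H$.

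Concretely, fix $s'$ strictly between the upper box-counting dimension of $C$ and $1/t$, and choose $\eta>0$ so small that $\delta:=1-(t+\eta)s'>0$. For each sufficiently large $H$, cover $C$ by $K_H\leq H^{(t+\eta)s'}=H^{1-\delta}$ intervals of length $H^{-(t+\eta)}$, and pick one point from each cell to form a finite set $S_H\subset C$. Since
$$\bigl|e(\alpha h^t)-e(\alpha' h^t)\bigr|\leq 2\pi H^t\cdot H^{-(t+\eta)}=2\pi H^{-\eta}$$
whenever $|\alpha-\alpha'|\leq H^{-(t+\eta)}$ and $h\leq H$, it suffices to prove \eqref{eqn_packdim_1} with $C$ replaced by~$S_H$.

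For the analytic input, I take $\mathcal{M}$ to be the same logarithmic-density-$1$ subsequence used in Theorem~\ref{p:jt1}, along which Tao's logarithmic two-point Chowla theorem yields the Ces\`aro statement
$$\lim_{\substack{M\in\mathcal{M}\\M\to\infty}}\mathbb{E}_{m\leq M}\lio(m)\lio(m+\ell)=0\quad\text{for every fixed }\ell\neq 0.$$
Expanding the square of the Liouville sum, reindexing by $\ell=h_2-h_1$, and applying the triangle inequality in the $\alpha$-direction gives
$$\mathbb{E}_{m\leq M}\bigl|\mathbb{E}_{h\leq H}\lio(m+h)e(\alpha h^t)\bigr|^2\leq\frac{1}{H}+\max_{0<|\ell|<H}\bigl|\mathbb{E}_{m\leq M}\lio(m)\lio(m+\ell)\bigr|+O\Big(\frac{H}{M}\Big),$$
crucially \emph{uniform in $\alpha$}, since the phase $e(\alpha(h_1^t-h_2^t))$ is killed by the triangle inequality. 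For each fixed~$H$ the middle term tends to~$0$ along~$\mathcal{M}$, being a maximum over finitely many shifts.

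A union bound over $S_H$ converts this into
$$\limsup_{\substack{M\in\mathcal{M}\\M\to\infty}}\mathbb{E}_{m\leq M}\sup_{\alpha\in S_H}\bigl|\mathbb{E}_{h\leq H}\lio(m+h)e(\alpha h^t)\bigr|^2\leq\frac{K_H}{H}\leq H^{-\delta},$$
and Cauchy--Schwarz upgrades this to an $L^1$ bound of $O(H^{-\delta/2})$; combined with the $O(H^{-\eta})$ discretization error this tends to~$0$ as $H\to\infty$. The only potential obstacle would have been needing a two-point Chowla estimate uniform in the shift~$\ell$ over a range growing with~$H$, but this is sidestepped by the quantifier order (first fix $H$, then let $M\to\infty$), which reduces the required input to the known fixed-shift Ces\`aro two-point Chowla along~$\mathcal{M}$. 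The restriction to upper box-counting dimension $<1/t$ is precisely the threshold at which $K_H=o(H)$ becomes achievable, so this $L^2$ approach is sharp in its own framework.
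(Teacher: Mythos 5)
Your proof is correct and follows essentially the same route as the paper: discretize $C$ to $o(H)$ points at scale $\asymp H^{-t}$ using the box-counting hypothesis and the bound $e(\beta)=1+O(|\beta|)$, then control the finite supremum via a second-moment argument driven by the Ces\`aro two-point Chowla estimate along the density-one set $\mathcal{M}$ (packaged in the paper as the Chebyshev-type Lemma~\ref{le:chebyshev}). The only cosmetic difference is that you pass from $L^2$ to $L^1$ by Cauchy--Schwarz, while the paper instead bounds the density of an exceptional set and applies a union bound.
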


Again by partial summation, we can also obtain a version of~\eqref{eqn_packdim_1}, where we use $\limsup_{M\to \infty}$ and a logarithmic average $\mathbb{E}_{m\leq M}^{\log}$.

\subsection{\texorpdfstring{A stronger local $t$-Fourier uniformity problem}{A stronger local t-Fourier uniformity problem}}

We now turn to the case of general nilpotent Lie groups $G$.
Together with~\eqref{unif1} and~\eqref{uniflog1}, we could consider their stronger and more symmetric versions:
\begin{align}\label{unif1b}\begin{split}
\lim_{H\to\infty}\limsup_{M\to \infty}\EE_{m\leq M}\sup_{g,g'\in G}\big|\EE_{h\leq H}\lio(m+h)f(g^{m+h}g'\Gamma)\big|=0\end{split}\end{align}
 and
\beq\label{uniflog1b}
\lim_{H\to\infty}\limsup_{M\to \infty}\EE^{\rm log}_{m\leq M}\sup_{g,g'\in G}\big|\EE_{h\leq H}\lio(m+h)f( g^{m+h}g'\Gamma)\big|=0\eeq
for all $f\in C(G/\Gamma)$. See~\cite{Ma-Ra-Ta-Te-Zi} for a result proving this\footnote{By \cite[Theorem~4.3]{Ma-Ra-Ta-Te-Zi} and the non-pretentiousness of the Liouville function (\cite[(1.12)]{MRT}), for any $\varepsilon>0$ one has
$\mathbb{E}_{m\leq M}\sup_g|\mathbb{E}_{h\leq H}\lio(m+h)f(g(m+h)\Gamma)|=o_{M\to \infty}(1)$ in the regime $M\geq H^{\varepsilon}$,
where the supremum is over all polynomial sequences $g\colon \mathbb{Z
}\to G$. Specializing to polynomial sequences of the form $n\mapsto g^ng'$ with $g,g'\in G$,  we get a similar supremum as in~\eqref{unif1b}.} in the regime $M\geq H^{\varepsilon}$ with $\varepsilon>0$ fixed.
These symmetric versions have rather neat dynamical reformulations (see the strong LOMO property below), and in the case $t=1$ they are equivalent to~\eqref{unif1} and~\eqref{uniflog1}, respectively,  as it is enough to consider $f$ being a character of $G/\Gamma$. Moreover, by Tao's work~\cite{Ta1}, the statement~\eqref{uniflog1} implies the logarithmic Chowla conjecture~\eqref{eq:logChowla}, and also~\eqref{eq:logChowla} implies~\eqref{uniflog1b}, so~\eqref{uniflog1} and~\eqref{uniflog1b} turn out to be equivalent.\footnote{The implication from~\eqref{uniflog1} to~\eqref{eq:logChowla} follows from~\cite[Theorem 1.8 and Remark 1.9]{Ta1}. For the implication from~\eqref{eq:logChowla} to~\eqref{uniflog1b}, note that the proof in~\cite{Ta1} that the logarithmic Chowla conjecture~\eqref{eq:logChowla} implies~\eqref{uniflog1} works equally well to show that~\eqref{eq:logChowla} implies~\eqref{uniflog1b} (or even a more general version in which $g^{m+h}g'$ is replaced with $g(m+h)$, where $g(\cdot)$ is any polynomial sequence from $\mathbb{Z}$ to $G$).}

Despite the equivalence of~\eqref{uniflog1} and~\eqref{uniflog1b}, for $t\geq 2$ partial progress on~\eqref{uniflog1b} with the supremum over a small set is harder to obtain than for~\eqref{uniflog1}. This is already seen in the case of abelian $G$, where we are now interested in sets $C\subset \mathbb{T}$ for which we can show
\beq\label{unif7a}
\lim_{H\to\infty}\limsup_{M\to \infty}\EE^{\rm log}_{m\leq M}\sup_{\substack{P(n)=\alpha n^{t}+Q(n)\\
\alpha\in C,\,{\rm deg}(Q)\leq t-1}}\big|\EE_{m\leq h<m+H}\lio(h)e(P(h))\big|=0.\eeq
For this problem, one can show (see Subsection~\ref{ss:zeronalezy}) that if~\eqref{unif7a} holds for some infinite, closed $C$ containing a rational number, then~\eqref{unif7a} holds with $t-1$ in place of $t$ for the full set $C=\T$. Hence, we cannot hope to be able to show~\eqref{unif7a} for very ``large'' infinite sets (in particular those that contain at least one rational number). On the other hand, whenever $C$ is countably infinite and contains no rational numbers, we are able to prove~\eqref{unif7a}.
In fact~\eqref{unif7a} for such $C$ is a straightforward consequence of the following theorem:

\begin{Th}\label{p:uniflog2A} Let $t\in \N$.  Let $G/\Gamma$ be a $t$-step nilmanifold and let $f\in C(G/\Gamma)$. For each countable compact subset $C\subset G$  for which for all $g\in C$ the nil-rotation $g'\Gamma\mapsto gg'\Gamma$ is ergodic,\footnote{We recall Leibman's condition (Thm.\ 2.17 in~\cite{Le}): If $G$ is generated by  its connected component $G^{\circ}$ and $g$, then the translation by $g$ on $G/\Gamma$ is ergodic if and only if it is ergodic on the torus $G/([G,G]\Gamma)$.}
\beq
\lim_{H\to\infty}\limsup_{M\to \infty}\EE^{\rm log}_{m\leq M}\sup_{g\in C,g'\in G}\Big|\EE_{h\leq H}\lio(m+h)f(g^{m+h}g'\Gamma)\Big|=0.
\eeq
In particular, the local $t$-Fourier uniformity holds on $C$.
\end{Th}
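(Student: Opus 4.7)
The plan is to proceed by induction on the nilpotency step $t$ of $G$, with Corollary~\ref{p:uniflog1} serving as the base case $t=1$. First I would perform the change of variables $y=g^mg'\Gamma$: since for fixed $g$ and $m$ the map $g'\Gamma\mapsto g^mg'\Gamma$ is a bijection of $G/\Gamma$, the identity $f(g^{m+h}g'\Gamma)=f(g^hy)$ converts the supremum over $g'\in G$ into a supremum over $y\in G/\Gamma$, reducing the claim to
\[
\lim_{H\to\infty}\limsup_{M\to\infty}\EE^{\log}_{m\leq M}\sup_{g\in C,\,y\in G/\Gamma}\bigl|\EE_{h\leq H}\lio(m+h)f(g^hy)\bigr|=0.
\]
A key advantage of this reformulation is that the nilsequence $h\mapsto f(g^hy)$ depends only on $(g,y)$ and not on the long parameter $m$. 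By Stone--Weierstrass I may further restrict $f$ to a dense class, namely finite linear combinations of functions with a prescribed vertical character along the last nontrivial central subgroup $G_t$ of the lower central series of $G$.

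For the base case $t=1$, I would expand $f$ as a (finite) Fourier series on the torus $G/\Gamma\cong\T^d$, namely $f(g^hy)=\sum_\xi\widehat{f}(\xi)e(\xi\cdot y)e(h\xi\cdot g)$, and move the supremum over $g$ inside the $\xi$-sum to bound the quantity of interest by a finite linear combination of terms $\EE^{\log}_m\sup_{g\in C}|\EE_h\lio(m+h)e(h\xi\cdot g)|$. For each $\xi$, the set $\xi\cdot C\subset\T$ is countable, hence of zero Lebesgue measure, and Corollary~\ref{p:uniflog1} kills each such term. For the inductive step $t\geq 2$, I would decompose $f$ according to its vertical characters $\chi$ of $G_t/(G_t\cap\Gamma)$. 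Trivial-$\chi$ components factor through the $(t-1)$-step quotient $G/G_t\Gamma$, and ergodicity is preserved under this projection (both ergodicities being equivalent, by Leibman's criterion, to ergodicity on the common horizontal torus $G/[G,G]\Gamma$), so the inductive hypothesis applies. For nontrivial $\chi$ I would invoke the Baker--Campbell--Hausdorff formula to write $f(g^hy)$ as a product of a degree-$t$ polynomial phase in $h$ (with leading coefficient determined by the horizontal projection $\pi(g)\in G/[G,G]\Gamma$) and a lower-step nilsequence, then apply a Cauchy--Schwarz / van der Corput differencing that separates the top-degree phase (controlled by Corollary~\ref{p:uniflog1} applied to the countable set $\pi(C)$) from the residual lower-step factor (controlled by the inductive hypothesis).

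The main obstacle will be the nontrivial vertical character case in the inductive step: the extraction of the leading degree-$t$ polynomial phase of $f(g^hy)$ must be done uniformly in $(g,y)\in C\times G/\Gamma$, and the van der Corput reduction must be carried out without destroying the supremum structure over $g\in C$. The auxiliary parameters introduced by differencing must be bookkept carefully, and one must verify that the various polynomial images of $C$ that emerge (leading, subleading, etc.) each remain countable and thus Lebesgue-null, so that Corollary~\ref{p:uniflog1} remains applicable at every stage.
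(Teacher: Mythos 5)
Your proposal is a genuinely different route from the paper's, but it has a concrete gap in the inductive step, and I do not see how to repair it with the tools you invoke.

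The paper's proof is entirely soft and dynamical: after the standard change of variables $y=g^{m}g'\Gamma$, Lemma~\ref{l:ciagi} converts the statement into the logarithmic strong LOMO property for the skew product $T\colon C\times G/\Gamma\to C\times G/\Gamma$, $T(g,y)=(g,gy)$. Because each nil-rotation $L_g$ for $g\in C$ is ergodic and hence uniquely ergodic (Green, Leibman), each fiber $\{g\}\times G/\Gamma$ carries exactly one $T$-invariant probability measure, so $(C\times G/\Gamma,T)$ has countably many ergodic measures. The Frantzikinakis--Host theorem~\cite{Fr-Ho}, in the strong-LOMO form of~\cite[Cor.~1.2]{Go-Le-Ru}, then gives the result in one stroke. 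No induction on $t$, no vertical character decomposition, no differencing.

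Your base case $t=1$ is fine: the change of variables, the Stone--Weierstrass reduction to trigonometric polynomials, and the observation that $\xi\cdot C$ is compact and countable (hence Lebesgue-null) so that Corollary~\ref{p:uniflog1} applies frequency by frequency -- all of this is correct and is essentially an elementary special case of the paper's argument. The problem is the inductive step for nontrivial vertical characters. After Cauchy--Schwarz / van der Corput, the quantity $\EE_h\lio(m+h)e(\alpha h^t)F(h)$ does not separate into ``a top-degree phase controlled by Corollary~\ref{p:uniflog1}'' and ``a residual factor controlled by the inductive hypothesis''; it produces
\[
\frac{1}{H'}\sum_{|h'|\leq H'}\Big|\EE_{h\leq H}\lio(m+h)\,\overline{\lio(m+h+h')}\,e\bigl(\alpha(h^t-(h+h')^t)\bigr)F(h)\overline{F(h+h')}\Big|,
\]
which carries the two-point correlation $\lio(m+h)\overline{\lio(m+h+h')}$ rather than a single Liouville weight. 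Neither Corollary~\ref{p:uniflog1} nor your inductive hypothesis says anything about such Chowla-type products twisted by nilsequences, and there is no available ``local Fourier uniformity for shifted products of Liouville'' at this generality to plug in. In addition, after extracting the top-degree $G_t$-phase the residual factor $F$ is still a $t$-step object (it lives on $G/\Gamma$ with the nontrivial character removed only along the central fiber), not a $(t-1)$-step nilsequence, so even formally the inductive hypothesis does not apply to it. The difficulty is not merely bookkeeping: as the paper's Subsection~\ref{ss:zeronalezy} makes explicit, the symmetric degree-$t$ statement for even a single rational frequency already implies the full degree-$(t-1)$ conjecture on $\T$, so any honest inductive scheme must make essential use of the irrationality / ergodicity hypothesis at the level of joint dynamics rather than frequency-by-frequency Fourier analysis. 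This is exactly what the strong-LOMO plus Frantzikinakis--Host route exploits and what the van der Corput plan loses.
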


As a corollary, we get that~\eqref{unif7a} holds for sets $C$ as in the above theorem.

\begin{Cor}\label{p:uniflog2} Let $t\geq2$. For each countable, closed subset $C\subset\T$ of  irrational numbers, we have
\beq\label{unif7c}
\lim_{H\to\infty}\limsup_{M\to \infty}\EE^{\rm log}_{m\leq M}\sup_{\substack{P(n)=\alpha n^{t}+Q(n)\\
\alpha\in C,\,{\rm deg}(Q)\leq t-1}}\Big|\EE_{ h\leq H}\lio(m+h)e(P(m+h))\Big|=0.
\eeq
\end{Cor}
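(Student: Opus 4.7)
The plan is to derive Corollary~\ref{p:uniflog2} from Theorem~\ref{p:uniflog2A} by realizing the polynomial phases $e(P(m+h))$ with $P(n)=\alpha n^t+Q(n)$, $\alpha\in C$, $\deg Q\le t-1$, as nilsequences on a single $t$-step nilmanifold and matching the two suprema.

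First, I would set up the nilmanifold and parametrize $C$ inside it. Take $G$ to be a $t$-step nilpotent Lie group capable of supporting polynomial phases of degree $\le t$: for $t=2$ the Heisenberg group $G=H_3$ with $\Gamma=H_3(\Z)$, and in general the analogous $(t+1)\times(t+1)$ upper-triangular unipotent group with $\Gamma=G(\Z)$. Fix a real number $\theta$ transcendental over the countable field $\Q(C)$ (which exists since $\Q(C)$ is countable), and define a continuous map $\iota\colon C\to G$, $\alpha\mapsto g_\alpha$, whose image encodes the leading coefficient $\alpha$; for $t=2$ one can explicitly take $g_\alpha=(\theta,\,2\alpha/\theta,\,0)$. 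The transcendence of $\theta$ over $\Q(\alpha)$ together with irrationality of $\alpha$ yields $\Q$-linear independence of the horizontal-torus components of $g_\alpha$, so by Leibman's criterion each $g_\alpha$ induces an ergodic nil-rotation on $G/\Gamma$. Moreover, $\iota(C)\subset G$ is countable and compact as the continuous image of the compact set $C\subset\T$, matching the hypotheses of Theorem~\ref{p:uniflog2A}.

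Second, I would realize each polynomial phase as a nilsequence translate on $G/\Gamma$. Choose a continuous function $f\in C(G/\Gamma)$ of top-degree vertical Fourier frequency; in the Heisenberg case one takes a theta-section $f(x,y,z)=e(z)\sum_{k\in\Z}\phi(y+k)e(kx)$ for a nonzero Schwartz $\phi$. A direct computation yields
\[
f\bigl(g_\alpha^n g'\Gamma\bigr)\;=\;e\bigl(\alpha n^t+Q(n)\bigr)\cdot\Psi\bigl(n;\,g_\alpha,g'\bigr),
\]
where $Q$ of degree $\le t-1$ is parametrized by the components of $g'\in G$, and $\Psi$ is a bounded continuous theta-correction. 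Fourier-expanding $\Psi$ in its arguments produces an absolutely convergent decomposition
\[
e\bigl(P(n)\bigr)\;=\;\sum_k c_k\,f\bigl(g_\alpha^n g'_k\Gamma\bigr),\qquad \sum_k|c_k|\le C_0,
\]
with $(g'_k)\subset G$ and a coefficient bound $C_0$ uniform over the whole polynomial family; alternatively, one may work with a uniform $L^\infty$-approximation to within $\varepsilon$ and take $\varepsilon\to 0$ at the end. Combining the triangle inequality with this uniform bound gives
\[
\sup_{P}\Bigl|\EE_{h\le H}\lio(m+h)e\bigl(P(m+h)\bigr)\Bigr|\;\le\;C_0\cdot\sup_{g\in\iota(C),\,g'\in G}\Bigl|\EE_{h\le H}\lio(m+h)f\bigl(g^{m+h}g'\Gamma\bigr)\Bigr|,
\]
and applying Theorem~\ref{p:uniflog2A} to the triple $(G/\Gamma,f,\iota(C))$ forces the right-hand supremum to vanish in the iterated limit $\lim_{H\to\infty}\limsup_{M\to\infty}\EE^{\rm log}_{m\le M}$, concluding Corollary~\ref{p:uniflog2}.

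The hardest part is the uniform Fourier decomposition in the second step. The naive choice $f=e(\text{top-degree coordinate})$ is not $\Gamma$-invariant on the compact nilmanifold, so one must work with theta-section type continuous functions and invert the resulting correction factor with coefficient bounds uniform in the leading coefficient $\alpha\in C$ and in the lower-order polynomial $Q$. This is by now a standard computation on Heisenberg-type nilmanifolds, but the uniformity of the summability across the family $\{g_\alpha\colon\alpha\in C\}$ must be verified with care, crucially using the transcendence of $\theta$ over $\Q(C)$ to prevent resonances in the Fourier expansion.
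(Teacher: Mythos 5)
Your first step (embedding $C$ into $G$ as a countable compact set of ergodic rotations) is sound, but the second step --- the uniform Fourier decomposition $e(P(n))=\sum_k c_k f(g_\alpha^n g'_k\Gamma)$ with $\sum_k|c_k|\le C_0$ --- is not just ``the hardest part,'' it is a genuine obstruction that cannot be repaired within your framework. On the standard Heisenberg nilmanifold $H_3/H_3(\Z)$, the vertical character $e(z)$ is not $\Gamma$-invariant, so it is not a continuous function on $G/\Gamma$. Any theta-section $\psi$ satisfies the functional equation $\psi(x,y+1)=e(-x)\psi(x,y)$, and so does every absolutely convergent combination of such sections. Consequently there is no way to arrange $\sum_k c_k\,\Psi_k(n)=1$ (with $\Psi_k$ the correction factors coming from translates $g'_k$ and choices of $\phi$): any such sum again obeys the theta functional equation, and a function on $\T^2$ obeying that equation can never be identically~$1$. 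The ``$L^\infty$-approximation to within $\varepsilon$'' alternative fails for the same reason --- $e(z)$ has positive distance from the space of $\Gamma$-invariant continuous functions. The same obstruction persists for all $t\ge2$ on the unipotent groups $G(\Z)\backslash G(\R)$.

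The paper sidesteps this issue entirely by a different choice of nilmanifold. Rather than the standard lattice, it takes $G$ to be upper-triangular matrices with \emph{integer} entries above the diagonal except in the last column (which is real), and $\Gamma$ the integer lattice. This makes $G/\Gamma$ \emph{diffeomorphic to the torus} $\T^d$, so the function $f(x_1,\ldots,x_d)=e(x_d)$ is a legitimate continuous function on the nilmanifold with no theta-correction. The nil-rotation by $g_k$ then becomes the affine skew-shift $T_k(x_1,\ldots,x_d)=(x_1+\alpha_k,\,x_2+x_1,\ldots,x_d+x_{d-1})$, whose $n$-th iterate has last coordinate $\binom{n}{d}\alpha_k+\sum_{i}\binom{n}{d-i}x_i$; varying $x$ over $\T^d$ realizes \emph{every} polynomial of degree $d$ with leading coefficient (proportional to) $\alpha_k$ exactly, with no decomposition or coefficient bounds needed. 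Theorem~\ref{p:uniflog2A} then applies directly. You should also note a small point in your setup that the paper handles implicitly: one must rescale $C$ by $d!$ to pass between leading coefficient $\alpha$ and the $\binom{n}{d}\alpha$ appearing from the skew-shift, which preserves countability, closedness, and irrationality.
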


\subsection{A dynamical interpretation and the strong LOMO property}

In order to see the relationship of the stronger
local $t$-Fourier uniformity statements~\eqref{unif1b},~\eqref{uniflog1b}
with dynamics, more precisely with Sarnak's conjecture~\cite{Sa}, recall first the concept of strong LOMO\footnote{Acronym of ``Liouville orthogonality of moving orbits''.} (or logarithmic strong LOMO) introduced and studied in~\cite{Ab-Le-Ru},~\cite{Ab-Ku-Le-Ru},~\cite{Go-Le-Ru},~\cite{Ka-Ku-Le-Ru}. Given a homeomorphism $T$ of a compact metric space $X$, we say that it satisfies the strong LOMO property if for all increasing sequences $(b_k)\subset\N$ with density\footnote{The density of $A\subset\N$ is defined as $d(A)=\lim_{M\to\infty}\EE_{m\leq M}\raz_A(m)$ (when this exists).} $d(\{b_k\colon\, k\geq1\})=0$, all sequences $(x_k)\subset X$ and all $f\in C(X)$, we have
\beq\label{smomo1}\lim_{K\to\infty}\frac1{b_K}\sum_{k<K}\Big|\sum_{b_k\leq n< b_{k+1}}f(T^{n}x_k)\lio(n)\Big|=0\eeq
or in its logarithmic form (here we assume that $\delta(\{b_k\colon \,k\geq1\})=0$)
\beq\label{smomolog1}\lim_{K\to\infty}\frac1{\log b_K}\sum_{k<K}\Big|\sum_{b_k\leq n< b_{k+1}}\frac{1}{n}f(T^{n}x_k)\lio(n)\Big|=0.\eeq
Even though the strong LOMO property looks much stronger than the Liouville orthogonality, i.e.\ $\lim_{N\to\infty}\EE_{n\leq N}f(T^nx)\lio(n)=0$ for all $f\in C(X)$ and $x\in X$, Sarnak's conjecture (predicting the Liouville orthogonality of all zero topological entropy dynamical systems $(X,T)$) is equivalent to the strong LOMO property for the class of zero topological  entropy systems~\cite{Ab-Ku-Le-Ru}.

In order to see the relationship between~\eqref{unif1b} and the strong LOMO property~\eqref{smomo1}, we consider the homeomorphism $T\colon  G\times G/\Gamma\to G\times G/\Gamma$ given by
\beq\label{ours}
T\colon  (g,g'\Gamma)\mapsto(g,gg'\Gamma).\eeq
Then~\eqref{unif1b} can be read as   (remembering that $f\in C(G/\Gamma)$)
\beq\label{unif2}
\lim_{H\to\infty}\limsup_{M\to \infty}\EE_{m\leq M}\sup_{g,g'\in G}\Big|\EE_{h\leq H}\lio(m+h)f\circ T^{m+h}(g,g'\Gamma)\Big|=0\eeq
while~\eqref{smomo1} can be read as
\beq\label{smomo2}
\lim_{K\to\infty}\frac1{b_K}\sum_{k<K}\sup_{g,g'\in G}\Big|\sum_{b_k\leq n<b_{k+1}}\lio(n)f\circ T^n(g,g'\Gamma)\Big|=0\eeq
(see e.g.~\cite{Go-Le-Ru}). Now, the equivalence of~\eqref{unif2} and~\eqref{smomo2} follows from the following simple lemma.

\begin{Lemma}\label{l:ciagi}
Let $B$ be a normed space, and let $(z_n)\subset B$ be bounded. Then
\begin{align}\label{eq:normed1}
\lim_{H\to\infty}\limsup_{M\to\infty}\EE_{m\leq M}\Big\|\EE_{h\leq H}z_{m+h}\Big\|=0
\end{align}
if and only if for each increasing sequence $(b_k)\subset\N$ with $d(\{b_k\colon \,k\geq1\})=0$, we have
\begin{align}\label{eq:normed2}
\lim_{K\to\infty}\frac1{b_K}\sum_{k<K}\Big\|\sum_{b_k\leq n<b_{k+1}} z_n\Big\|=0.
\end{align}
An analogous result holds for logarithmic averages (with the logarithmic density $\delta$ in place of the density $d$).\end{Lemma}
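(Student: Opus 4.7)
Plan: prove the biconditional by establishing each direction separately, both from a single ``switching identity'' that turns a long block sum into an average of overlapping short-window sums. The identity is $\sum_{n \in I} z_n = \frac{1}{H_0} \sum_m \sum_{h=1}^{H_0} z_{m+h} \raz[m+h \in I]$ for any interval $I \subset \N$ and any $H_0 \geq 1$, valid since each $n \in I$ is counted by exactly the $H_0$ pairs $(m,h)=(n-h,h)$ with $1 \leq h \leq H_0$. Taking norms, separating the ``interior'' $m$ (with $[m+1,m+H_0]\subseteq I$) from the at most $2H_0$ ``boundary'' $m$, and using the boundedness $C:=\sup_n\|z_n\|<\infty$, one obtains
\[
\Bigl\|\sum_{n \in I} z_n\Bigr\| \;\leq\; \sum_{m \in I^\circ} \bigl\|\EE_{h \leq H_0} z_{m+h}\bigr\| + 2CH_0,
\]
where $I^\circ\subseteq I$ is a sub-interval of length $|I|-O(H_0)$ (and when $|I|<H_0$ the sum is vacuous and the trivial bound $\|\sum_I z_n\|\leq C|I|\leq CH_0$ is used).

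For the direction $(\ref{eq:normed1})\Rightarrow(\ref{eq:normed2})$, apply the identity to the blocks $I_k=[b_k,b_{k+1})$ and sum over $k<K$. Disjointness of the $I_k^\circ$ and containment in $[0,b_K]$ yield
\[
\sum_{k<K}\Bigl\|\sum_{b_k\leq n<b_{k+1}} z_n\Bigr\| \;\leq\; b_K\,\EE_{m \leq b_K} \bigl\|\EE_{h \leq H_0} z_{m+h}\bigr\| + 2CH_0 K.
\]
Divide by $b_K$: the second term equals $2CH_0\cdot K/b_K$, which vanishes as $K\to\infty$ because $d(\{b_k\})=0$ forces $K/b_K\to 0$; the first term is bounded by $\limsup_M\EE_{m\leq M}\|\EE_{h\leq H_0}z_{m+h}\|$, made arbitrarily small by (\ref{eq:normed1}) upon letting $H_0\to\infty$. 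This gives (\ref{eq:normed2}).

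For $(\ref{eq:normed2})\Rightarrow(\ref{eq:normed1})$, I argue by contrapositive: if (\ref{eq:normed1}) fails, choose $\eps>0$, $H_j\to\infty$, and $M_j\to\infty$ recursively with $M_j\gg(M_{j-1}+H_j)/\eps$ and $\EE_{m\leq M_j}\|\EE_{h\leq H_j}z_{m+h}\|>\eps$. The rapid growth forces $\EE_{m\in[M_{j-1},M_j)}\|\EE_{h\leq H_j}z_{m+h}\|\geq\eps/2$, and pigeonholing over the $H_j$ residue classes modulo $H_j$ produces a shift $s_j\in\{0,\ldots,H_j-1\}$ for which the average of $\|\EE_{h\leq H_j}z_{b+h}\|$ along the arithmetic progression $b=M_{j-1}+s_j+iH_j\in[M_{j-1},M_j)$ is still at least $\eps/2$. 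Define $(b_k)$ by concatenating these progressions over $j$; the count $|\{b_k\}\cap[1,M_j]|\leq M_j/H_j$ forces $d(\{b_k\})=0$, while the phase-$j$ contribution to the sum in (\ref{eq:normed2}) at scale $K=K_j$ (last index in phase $j$) is at least $H_j\cdot(\eps/2)(M_j-M_{j-1})/H_j=(\eps/2)(M_j-M_{j-1})$, so $\frac{1}{b_{K_j}}\sum_{k<K_j}\|\sum_{b_k\leq n<b_{k+1}}z_n\|\geq(\eps/2)(1-o(1))$, contradicting (\ref{eq:normed2}).

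The logarithmic case is proved along identical lines after weighting the switching identity by $1/(m+h)$ and using $1/(m+h)=(1+O(h/m))/m$ within each $H_0$-window; the resulting inequality has the form $\|\sum_{n\in I}z_n/n\|\leq\sum_{m\in I^\circ}(1/m)\|\EE_{h\leq H_0}z_{m+h}\|+O(H_0/\min I)$, and summing gives an error $O(H_0)\sum_{k}1/b_k$ that is $o(\log b_K)$ precisely because $\delta(\{b_k\})=0$ means $\sum_{k:b_k\leq M}1/b_k=o(\log M)$. The main obstacle I anticipate is in the converse direction, namely coordinating the growth of $(M_j,H_j)$ so that the pigeonhole step keeps a uniform $\eps/2$ lower bound, the earlier phases contribute negligibly at scale $M_j$, and the density of $\{b_k\}$ vanishes at every scale (not only at the phase endpoints); all three are handled by requiring $M_j/M_{j-1}\to\infty$ fast enough, which is a routine but fiddly parameter chase.
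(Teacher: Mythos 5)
Your proposal is correct and follows essentially the same approach as the paper's proof. The forward direction is the same overlapping-window averaging together with $K/b_K\to 0$; the converse is the same contrapositive-plus-pigeonhole construction of a sparse $(b_k)$ concentrating the bad short-window sums, with only a cosmetic difference in bookkeeping (you concatenate phase-by-phase arithmetic progressions with gap $H_j$, whereas the paper normalizes the pigeonholed shift $a(i)/H(M_i)\to a_0$ along a subsequence and writes a single closed-form $b_{2k},b_{2k+1}$).
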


It follows that the stronger local $t$-Fourier uniformity problem~\eqref{unif1b} is equivalent to the strong LOMO property of the homeomorphisms of the form~\eqref{ours}\footnote{The space $G\times G/\Gamma$ is locally compact  but need not be  compact. However, the problem of Liouville orthogonality (or of LOMO) of $T$ can still be studied here since we have a lot of {\em probability} $T$-invariant measures:  each such measure can be disintegrated over its projection on the first coordinate and the conditional (probability) measures are invariant under fiber nil-rotations. Since the latter are of zero entropy, in particular, it is  natural to consider $T$ as a zero entropy homeomorphism of $G\times G/\Gamma$. In particular, the Liouville orthogonality (or LOMO) can be studied for special continuous observables, for example, for the continuous functions depending only on the second coordinate.
Of course, the problem of non-compactness disappears if we take $C\subset G$ a {\em compact} subset and consider the relevant restriction of $T$. The problem is also irrelevant in abelian case, as we can simply consider $T$ as defined on $G/\Gamma\times G/\Gamma$: $T(g\Gamma,g'\Gamma)=(g\Gamma,gg'\Gamma)$.} and similarly with logarithmic averages (cf.~\cite{Ka-Ku-Le-Ru}). Hence, Sarnak's conjecture implies the local $t$-Fourier uniformity for each $t\geq1$.
Although Lemma~\ref{l:ciagi} is practically proved in~\cite{Ab-Le-Ru},~\cite{Ka-Ku-Le-Ru},~\cite{Ka-Le-Ul}, for the sake of completeness we will provide a proof in Appendix~\ref{App:A}.

\subsection{Strategy of the proofs}

\subsubsection{Proof of Theorems~\ref{p:jt1} and~\ref{thm:packingdimension}}

The proofs of Theorems~\ref{p:jt1} and~\ref{thm:packingdimension} are number-theoretic in nature. For the proof of Theorem~\ref{p:jt1}, we use the union bound, the second moment method and the density version of the two-point Chowla conjecture, proved in~\cite{Ta-Te}. When combined with the fact that $e((\alpha-\beta)h)=1+O(h|\alpha-\beta|)$, we prove the result for all sets $C$ that can be covered by $o(J)$ intervals of length $1/J$ as $J\to \infty$. A relatively short measure-theoretic argument shows that this property holds for all closed $C$ of Lebesgue measure $0$. For the proof of Theorem~\ref{thm:packingdimension}, we use largely the same strategy. The main difference is that for $t\geq 2$ we have $e((\alpha-\beta)h^t)=1+O(h^t|\alpha-\beta|)$, so we can only obtain the desired conclusion for sets $C$ that can be covered by $o(J^{1/t})$ intervals of length $1/J$ as $J\to \infty$.

\subsubsection{Proofs of Theorems~\ref{p:unif1} and~\ref{p:uniflog2A}}

The proofs of Theorems~\ref{p:unif1} and~\ref{p:uniflog2A} are dynamical; we prove these results by showing that the strong LOMO or logarithmic strong LOMO holds for certain systems, see~\cite{Fr-Ho},~\cite{Hu-Xu-Ye},~\cite{Ka-Le-Ra} and~\cite{Ta} for some other systems for which this property has been shown (see also Corollary~3.25 in~\cite{Fe-Ku-Le}). The scheme of proofs of our results is as follows:
\begin{itemize}
\item Take a class of topological systems for which we know that the (logarithmic) Liouville orthogonality holds.
\item Prove that in fact all the systems of this class satisfy the (logarithmic) strong LOMO property.
\item For Theorem~\ref{p:uniflog2A}, we use the class $\mathscr{A}_1$ of those zero entropy topological systems for which the set of ergodic measures is countable, relying on a celebrated theorem of Frantzikinakis and Host~\cite{Fr-Ho}. For Theorem~\ref{p:unif1}, we use the class $\mathscr{A}_2$ of systems whose all invariant measures yield  measure-theoretic systems which are $(q_n)$-rigid, relying on a result from~\cite{Ka-Le-Ra}. Lastly, we give an alternative proof of Corollary~\ref{p:uniflog1} by considering the class $\mathscr{A}_3$ of systems whose invariant measures yield measure-theoretic systems with singular spectra, relying on a dynamical interpretation of Tao's two-point logarithmic Chowla result~\cite{Ta}, given in Corollary 3.25 in the survey~\cite{Fe-Ku-Le}.
\end{itemize}

In Appendix~\ref{App:A}, we give the proof of Lemma~\ref{l:ciagi},
in Appendix~\ref{a:nowy} we show that results of a certain type can be extended from connected, simply connected Lie groups to more general Lie groups (which is needed for the proof of Theorem~\ref{p:uniflog2A}),
and in Appendix~\ref{App:B}, we show a general construction of subsets $C\subset\T$ of full Hausdorff dimension for which~\eqref{unif6} is satisfied.

\section{Proof of Theorem~\ref{p:jt1}}

\subsection{Lemmas for Theorem~\ref{p:jt1}}

We start with two-point Chowla, with Ces\`aro  averages at almost all scales.

\begin{Lemma}\label{le:two-point} There exists a set $\mathcal{M}\subset \mathbb{N}$ with $\delta(\mathcal{M})=1$ such that the following holds. For any integer $h\neq 0$,
\begin{align*}
\lim_{\substack{M\in \mathcal{M}\\M\to \infty}} \mathbb{E}_{m\leq M}\lio(m)\lio(m+h)=0.
\end{align*}
\end{Lemma}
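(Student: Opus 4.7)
\textbf{Proof plan for Lemma~\ref{le:two-point}.}

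The substantive input is the density version of the two-point Chowla conjecture of Tao--Ter\"av\"ainen~\cite{Ta-Te}: for each fixed $h\neq 0$, the Ces\`aro correlations $a_h(M):=\mathbb{E}_{m\leq M}\lio(m)\lio(m+h)$ converge to $0$ in logarithmic density. Since $|a_h(M)|\leq 1$, this is equivalent to
\[
\mathbb{E}^{\log}_{M\leq X}|a_h(M)|\xrightarrow[X\to\infty]{}0 \qquad\text{for each fixed }h\neq 0.
\]
The task is to upgrade these per-$h$ statements to a single set $\mathcal{M}$ of logarithmic density $1$ along which $a_h(M)\to 0$ simultaneously for every $h\neq 0$. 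The hard analytic work is done by~\cite{Ta-Te}; what remains is a soft diagonalization.

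The plan is to fold all $h$ into a single nonnegative test function and then use Markov. Define
\[
F(M):=\sum_{h\in\mathbb{Z}\setminus\{0\}} 2^{-|h|}\,|a_h(M)|,
\]
which is bounded by $2$. Since $2^{-|h|}|a_h(M)|\leq 2^{-|h|}$ with $\sum_{h\neq 0} 2^{-|h|}<\infty$, dominated convergence in the counting measure on $\mathbb{Z}\setminus\{0\}$ applied to
\[
\mathbb{E}^{\log}_{M\leq X}F(M)=\sum_{h\in\mathbb{Z}\setminus\{0\}} 2^{-|h|}\,\mathbb{E}^{\log}_{M\leq X}|a_h(M)|
\]
yields $\mathbb{E}^{\log}_{M\leq X}F(M)\to 0$ as $X\to\infty$.

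Next I would pass from this averaged vanishing to a density-$1$ pointwise statement. By Markov's inequality, for each $k\in\mathbb{N}$ the set $E_k:=\{M\in\mathbb{N}:F(M)>1/k\}$ has logarithmic density $0$. Choose increasing $N_k\to\infty$ such that $\mathbb{E}^{\log}_{M\leq X}\raz_{E_k}(M)<2^{-k}$ for all $X\geq N_k$, and set
\[
\mathcal{M}:=\mathbb{N}\setminus\bigcup_{k\geq 1}\bigl(E_k\cap[N_k,N_{k+1})\bigr).
\]
A routine block-by-block estimate (using $\log X\geq \log N_K\geq \log N_{k+1}$ for $k<K$ when $X\in[N_K,N_{K+1})$, splitting the sum $\sum_{k<K}2^{-k}(\log N_{k+1})/(\log X)$ at some $K_0$ and letting $X\to\infty$, then $K_0\to\infty$) verifies $\delta(\mathcal{M}^c)=0$, hence $\delta(\mathcal{M})=1$. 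By construction $F(M)\leq 1/k$ on $\mathcal{M}\cap[N_k,\infty)$, so $F(M)\to 0$ along $\mathcal{M}$. Finally, the trivial lower bound $F(M)\geq 2^{-|h|}|a_h(M)|$ gives $a_h(M)\to 0$ along $\mathcal{M}$ for every $h\neq 0$.

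The single delicate point is producing $\mathcal{M}$ independent of $h$, which is exactly what the exponential weighting in the definition of $F$ buys us; without this step one would only get one set $\mathcal{M}_h$ for each $h$ separately, which is already~\cite{Ta-Te}. No new number-theoretic input beyond Tao--Ter\"av\"ainen is needed.
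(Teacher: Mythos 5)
Your argument is correct, but it does more work than the paper. The paper proves this lemma with a one-line citation to Tao--Ter\"av\"ainen \cite[Corollary 1.13(ii)]{Ta-Te}, which already delivers a single logarithmic-density-one set of scales along which the two-point correlation vanishes for \emph{every} shift $h\neq 0$ simultaneously; no further diagonalization is needed. You instead start from the weaker per-$h$ statement (that $\mathbb{E}_{m\leq M}\lio(m)\lio(m+h)\to 0$ in logarithmic density for each fixed $h$) and then fold the shifts into the weighted sum $F(M)=\sum_{h\neq 0}2^{-|h|}|a_h(M)|$, apply dominated convergence and Markov, and excise the bad scales block by block. This is a sound and standard diagonalization; in particular, for $X\in[N_K,N_{K+1})$ one bounds the logarithmic density of the excised set by $\frac{1}{\log X}\sum_{k<K}2^{-k}\log N_{k+1}+2^{-K}$, and splitting at a fixed $K_0$ and letting $X\to\infty$ and then $K_0\to\infty$ gives density zero as you indicate. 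The trade-off is transparency versus economy: your route makes explicit that only the per-$h$ input from \cite{Ta-Te} is essential, while the paper simply invokes the already-uniform form of the Tao--Ter\"av\"ainen result.
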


\begin{proof}
This is~\cite[Corollary 1.13(ii)]{Ta-Te} with $g_1=g_2=\lio$.
\end{proof}

Lemma~\ref{le:two-point} quickly implies the following lemma about the frequency of large values of the sum $\sum_{h\leq H}\lio(m+h)e(\alpha h^s)$. We use $A\ll B$ to denote that $|A|\leq CB$ for some absolute constant $C$.

\begin{Lemma}\label{le:chebyshev} There exists a set $\mathcal{M}\subset \mathbb{N}$ with $\delta(\mathcal{M})=1$ such that for any $\varepsilon>0$ and $H\geq 1$,
\begin{align*}
\limsup_{\substack{M\in \mathcal{M}\\M\to \infty}}~\sup_{\|a\|_{\infty}\leq 1}~\frac{1}{M}\left|\left\{m\in [1,M]\cap \mathbb{N}\colon \,\, \left|\frac{1}{H}\sum_{h\leq H}\lio(m+h)a(h)\right|\geq \varepsilon\right\} \right|\ll \frac{\varepsilon^{-2}}{H},
\end{align*}
where the supremum is taken over all sequences $a\colon  \N\to\C$ with $\|a\|_\infty=\sup_{n\in\N}|a(n)|\leq 1$.
\end{Lemma}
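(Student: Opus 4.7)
The plan is to apply Chebyshev's (second moment) inequality and then expand the square, using Lemma~\ref{le:two-point} to kill the off-diagonal. Let $\mathcal{M}$ be the set of logarithmic density $1$ provided by Lemma~\ref{le:two-point}. Fix $\varepsilon>0$, $H\geq 1$, and any $a\colon\N\to\C$ with $\|a\|_\infty\leq 1$. Writing $S_a(m):=\frac1H\sum_{h\leq H}\lio(m+h)a(h)$, Chebyshev gives
$$
\frac{1}{M}\bigl|\{m\leq M\colon |S_a(m)|\geq \varepsilon\}\bigr|\;\leq\;\varepsilon^{-2}\,\mathbb{E}_{m\leq M}|S_a(m)|^2.
$$

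Next, I would expand the square and split into diagonal and off-diagonal parts:
$$
\mathbb{E}_{m\leq M}|S_a(m)|^2=\frac{1}{H^2}\sum_{h\leq H}|a(h)|^2\;+\;\frac{1}{H^2}\sum_{\substack{h_1,h_2\leq H\\ h_1\neq h_2}}a(h_1)\overline{a(h_2)}\,\mathbb{E}_{m\leq M}\lio(m+h_1)\lio(m+h_2),
$$
using $\lio(m+h)^2=1$ on the diagonal. The diagonal contribution is uniformly at most $1/H$ since $\|a\|_\infty\leq 1$. For the off-diagonal piece, I bound in absolute value (discarding the $a$'s) by
$$
\frac{1}{H^2}\sum_{\substack{h_1,h_2\leq H\\ h_1\neq h_2}}\bigl|\mathbb{E}_{m\leq M}\lio(m+h_1)\lio(m+h_2)\bigr|,
$$
a quantity that is independent of $a$.

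A trivial shift reduces $\mathbb{E}_{m\leq M}\lio(m+h_1)\lio(m+h_2)$ to $\mathbb{E}_{m\leq M}\lio(m)\lio(m+|h_1-h_2|)$ up to a boundary error of size $O(H/M)$. By Lemma~\ref{le:two-point}, each such term tends to $0$ as $M\to\infty$ along $\mathcal{M}$, and since for fixed $H$ the above double sum is a finite sum of at most $H^2$ such terms, the off-diagonal contribution tends to $0$ along $\mathcal{M}$ for each fixed $H$. Taking the supremum over $a$ first (which is harmless because the diagonal bound $1/H$ and the off-diagonal bound above are both independent of $a$) and then $\limsup_{M\in\mathcal M,\,M\to\infty}$, we obtain
$$
\limsup_{\substack{M\in\mathcal{M}\\ M\to\infty}}\sup_{\|a\|_\infty\leq 1}\mathbb{E}_{m\leq M}|S_a(m)|^2\;\leq\;\frac{1}{H},
$$
and combining with Chebyshev yields the desired bound $\ll \varepsilon^{-2}/H$.

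The only delicate point is the order of $\sup_a$ and $\limsup_M$, but this is dissolved by the observation that after the absolute-value bound the off-diagonal term no longer depends on $a$; everything else is a routine second-moment computation.
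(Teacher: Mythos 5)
Your proof is correct and follows essentially the same route as the paper: Chebyshev, expand the square, discard the $a$'s via absolute values (making the off-diagonal bound uniform in $a$), shift to reduce to $\mathbb{E}_{m\leq M}\lio(m)\lio(m+h)$ up to $O(H/M)$, and invoke Lemma~\ref{le:two-point} for each fixed nonzero shift. Your explicit remark on why $\sup_a$ and $\limsup_M$ can be interchanged is a point the paper leaves implicit, but it is the same observation.
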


\begin{proof} Let $\mathcal{M}$ be as in Lemma~\ref{le:two-point}.
Let $S_{a,M}$ be the set whose cardinality we are interested in. By Chebyshev's inequality and the fact that the number of $h_1,h_2\leq H$  with $h_2-h_1=h$ is $\max\{H-|h|,0\}$, we obtain
\begin{align*}
\frac{|S_{a, M}|}{M}&\leq (\varepsilon H)^{-2}\mathbb{E}_{m\leq M}\left|\sum_{h\leq H}\lio(m+h)a(h)\right|^2\\
&=(\varepsilon H)^{-2}\sum_{h_1,h_2\leq H}\mathbb{E}_{m\leq M}\lio(m+h_1)\lio(m+h_2)a(h_1)\overline{a(h_2)}\\
&\leq (\varepsilon H)^{-2}\sum_{h_1, h_2\leq H}\biggl|\mathbb{E}_{m\leq M}\lio(m)\lio(m+h_2-h_1)\biggr|+\frac{\varepsilon^{-2}H}{M}\\
&= (\varepsilon H)^{-2}\sum_{|h|\leq H}(H-|h|)\biggl|\mathbb{E}_{m\leq M}\lio(m)\lio(m+h)\biggr|+\frac{\varepsilon^{-2}H}{M}.
\end{align*}
The contribution of the terms $h\neq 0$ is, by Lemma~\ref{le:two-point}, $\ll H^{-100}$ (say) as soon as $M\in \mathcal{M}$ is large enough in terms of $H$. The contribution of the term $h=0$ in turn is $\ll \varepsilon^{-2}/H$. Hence, we conclude that
$$\limsup_{\substack{M\in \mathcal{M}\\M\to \infty}}~\sup_{\|a\|_\infty\leq 1}~|S_{\alpha, M}|/M\ll \varepsilon^{-2}/H,$$
as desired.
\end{proof}

\subsection{Reduction from zero measure to covering numbers}

In this subsection, we show that $C$ being closed and of measure zero implies a condition that is easier to work with in our proof of Theorem~\ref{p:jt1}. In fact, we prove this in a slightly more general form for sets that are allowed to have positive measure\footnote{We thank the referee for pointing out this generalization.}. 
For a set $C\subset \mathbb{R}$, let $N_r(C)$ be the least number of closed intervals of length $r$ whose union covers $C$.

\begin{Lemma}\label{le:closed}
Let $C\subset [0,1]$ be a closed set. Then
\begin{align*}
\limsup_{r\to 0}rN_r(C)\leq \Leb(C).
\end{align*}
\end{Lemma}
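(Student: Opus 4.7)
\medskip

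\noindent\textbf{Proof plan for Lemma~\ref{le:closed}.}

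The plan is to combine the outer regularity of Lebesgue measure with the compactness of $C$ to reduce the covering problem from the possibly complicated set $C$ to a finite disjoint union of open intervals whose total length barely exceeds $\Leb(C)$. Once this reduction is made, covering each such open interval by intervals of length $r$ is elementary.

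First I would fix $\varepsilon>0$ and invoke outer regularity: there exists an open set $U\subset\R$ with $C\subset U$ and $\Leb(U)<\Leb(C)+\varepsilon$. Writing $U$ as a countable disjoint union of open intervals $U=\bigsqcup_{i\geq 1}I_i$, the compactness of $C$ (a closed subset of $[0,1]$) guarantees that finitely many of these open intervals already cover $C$, say $I_1,\dots,I_n$. In particular $\sum_{i=1}^{n}|I_i|\leq \Leb(U)<\Leb(C)+\varepsilon$.

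Next I would cover each $I_i$ by closed intervals of length $r$. Clearly one can do this using at most $\lceil |I_i|/r\rceil$ such intervals, so
\[
N_r(C)\;\leq\;\sum_{i=1}^{n}\Big\lceil\frac{|I_i|}{r}\Big\rceil\;\leq\;\sum_{i=1}^{n}\Big(\frac{|I_i|}{r}+1\Big)\;\leq\;\frac{\Leb(C)+\varepsilon}{r}+n.
\]
Multiplying by $r$ and letting $r\to 0$ yields $\limsup_{r\to 0}rN_r(C)\leq \Leb(C)+\varepsilon$, since the number $n$ of open components depends on $\varepsilon$ (and on $U$) but not on $r$. Letting $\varepsilon\to 0$ completes the proof.

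I do not anticipate a real obstacle here: the entire argument is a standard compactness plus outer regularity reduction, and the only point worth underlining is that $n$ must be fixed before sending $r\to 0$, which is precisely what compactness of $C$ buys us. The inequality is sharp when $C$ is itself a finite union of intervals, so no loss is incurred at any step.
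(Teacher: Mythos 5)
Your proposal is correct and follows essentially the same route as the paper: outer regularity to find an open $U\supset C$ with $\Leb(U)<\Leb(C)+\varepsilon$, compactness of $C$ to reduce to finitely many intervals, and then a direct count of length-$r$ intervals needed to cover them, with the finite count $n$ fixed before sending $r\to 0$.
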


\begin{proof} This is somewhat similar to~\cite[Lemma 6.6]{Ta0}.
Let $\varepsilon>0$. By the definition of the Lebesgue measure, there is an open set $U$ such that $C\subset U$ and $\Leb(U)\leq \Leb(C)+\varepsilon$. By compactness, we can assume that $U$ is a union of finitely many intervals $I_1,\ldots, I_K$ for some natural number $K$. Removing any overlapping parts of the intervals $I_j$ (and possibly adding some singleton intervals), we may also assume that the $I_j$ are disjoint.  Note that each $I_j$ satisfies $\lim_{r\to 0}rN_r(I_j)=\Leb(I_j)$. Hence,
\begin{align*}
\limsup_{r\to 0}rN_r(C)\leq \sum_{j=1}^{K}\limsup_{r\to 0}rN_r(I_j)=\sum_{j=1}^{K}\Leb(I_j)\leq \Leb(C)+\varepsilon.    
\end{align*}
Letting $\varepsilon\to 0$, the claim follows.
\end{proof}




\subsection{Proof of Theorem~\ref{p:jt1b}}

We will in fact prove the following more general theorem from which Theorem~\ref{p:jt1} is an immediate consequence.

\begin{Th}\label{p:jt1b} There exists a set $\mathcal{M}\subset \mathbb{N}$ of logarithmic density $1$ such that the following holds.
Let $C\subset\T$ be any closed set. Then we have
\begin{align*}
\lim_{H\to \infty}\limsup_{\substack{M\in \mathcal{M}\\M\to \infty}}\mathbb{E}_{m\leq M}\sup_{\alpha\in C}\Big|\EE_{m\leq h< m+H}\lio(h)e(h\alpha)\Big|\ll \Leb(C)^{1/4}.
\end{align*}
\end{Th}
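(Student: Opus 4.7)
The plan is to combine the second moment bound of Lemma~\ref{le:chebyshev} with a net argument on $C$, using Lemma~\ref{le:closed} to control the size of the net in terms of $\Leb(C)$. First I would simplify the inner sum: pulling out a factor of $e(m\alpha)$ gives
$$\bigl|\EE_{m\leq h<m+H}\lio(h)e(h\alpha)\bigr|=\bigl|F_{m,H}(\alpha)\bigr|,\quad F_{m,H}(\alpha):=\EE_{k<H}\lio(m+k)e(k\alpha),$$
reducing the problem to bounding $\EE_{m\leq M}\sup_{\alpha\in C}|F_{m,H}(\alpha)|$. Fix parameters $\vep,\delta>0$, to be optimized at the end. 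By Lemma~\ref{le:closed}, for $H$ large enough depending on $\vep$, we can cover $C$ by $N=N_{\vep/H}(C)$ intervals of length $\vep/H$, centered at points $\alpha_1,\ldots,\alpha_N\in C$, with $N\leq (H/\vep)(\Leb(C)+\eta(H))$ for some $\eta(H)\to 0$ as $H\to\infty$. Since $|e(k\alpha)-e(k\alpha_j)|\leq 2\pi k|\alpha-\alpha_j|\leq 2\pi\vep$ whenever $k<H$ and $|\alpha-\alpha_j|\leq \vep/H$, we obtain the uniform approximation $\sup_{\alpha\in C}|F_{m,H}(\alpha)|\leq \max_{j}|F_{m,H}(\alpha_j)|+2\pi\vep$.

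Next I would apply Lemma~\ref{le:chebyshev} to the bounded sequence $a(h)=e(h\alpha_j)$ for each individual $j$. This yields, uniformly in $j\leq N$ and for $M\in\cm$ sufficiently large (depending on $H,\delta$), the density bound
$$\frac{1}{M}\bigl|\{m\leq M\colon |F_{m,H}(\alpha_j)|>\delta\}\bigr|\ll \frac{1}{\delta^2 H}.$$
A union bound over the $N$ phases combined with the trivial bound $|F_{m,H}(\alpha_j)|\leq 1$ then gives
$$\EE_{m\leq M}\max_{j}|F_{m,H}(\alpha_j)|\leq \delta+\frac{1}{M}\bigl|\{m\leq M\colon \max_j|F_{m,H}(\alpha_j)|>\delta\}\bigr|\ll \delta+\frac{N}{\delta^2 H}\ll \delta+\frac{\Leb(C)+\eta(H)}{\vep\delta^2}.$$

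Finally I would optimize. Combining the bounds, taking $\limsup_{M\in\cm,\,M\to\infty}$ and then $\lim_{H\to\infty}$ (so that the $\eta(H)$ contribution vanishes) yields
$$\lim_{H\to\infty}\limsup_{\substack{M\in\cm\\M\to\infty}}\EE_{m\leq M}\sup_{\alpha\in C}|F_{m,H}(\alpha)|\ll \vep+\delta+\frac{\Leb(C)}{\vep\delta^2}$$
for every $\vep,\delta>0$. Minimizing the right-hand side by the choice $\delta\asymp (\Leb(C)/\vep)^{1/3}$ gives a bound $\ll \vep+(\Leb(C)/\vep)^{1/3}$, and then balancing via $\vep\asymp \Leb(C)^{1/4}$ produces the claimed $O(\Leb(C)^{1/4})$. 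I do not anticipate any serious obstacle here: the heavy lifting is done by Lemmas~\ref{le:chebyshev} and~\ref{le:closed}, and the only subtlety is keeping the parameters $\vep,\delta$ independent of $H$ and $M$ so that the nested limits can be taken in the correct order.
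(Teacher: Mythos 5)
Your proof is correct and follows essentially the same strategy as the paper: cover $C$ by $O(\Leb(C)\,H/\vep)$ intervals of length $\vep/H$ via Lemma~\ref{le:closed}, approximate the supremum by the maximum over the net centers with error $O(\vep)$, and control the exceptional set via the second-moment bound of Lemma~\ref{le:chebyshev} together with a union bound. The only cosmetic difference is that you carry two free parameters $\vep,\delta$ and optimize at the end, whereas the paper sets both (in your notation) equal to $\Leb(C)^{1/4}$ from the outset; the optimization indeed returns $\vep\asymp\delta\asymp\Leb(C)^{1/4}$, so the two are the same argument.
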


\begin{proof} Let $\mathcal{M}\subset \mathbb{N}$ with $\delta(\mathcal{M})=1$ be as in Lemma~\ref{le:chebyshev}. 
Write $\varepsilon=\Leb(C)\in [0,1]$. Then by Lemma~\ref{le:closed} for every $H$ large enough in terms of $\varepsilon$ there exist some  $J=J_H\leq 2\varepsilon^{3/4} H$ and $\alpha_1,\ldots, \alpha_J\in \mathbb{R}$ (depending on $H$) such that
\begin{align*}
C\subset \bigcup_{j\leq J}\left[\alpha_j,\alpha_j+\frac{\varepsilon^{1/4}}{H}\right].
\end{align*}
Using $e(\beta)=1+O(|\beta|)$, it follows that
\begin{align*}
\sup_{\alpha\in C}\left|\sum_{h\leq H}\lio(m+h)e(\alpha h)\right|\leq \max_{j\leq J} \left|\sum_{h\leq H}\lio(m+h)e(\alpha_jh)\right|+O(\varepsilon^{1/4} H).
\end{align*}
Dividing both sides by $H$ and considering the values of
\[
\sup_{\alpha \in C} \left|\sum_{h\leq H}\lio(m+h)e(\alpha h)\right|
\]
smaller than $K_0\vep^{1/4} H$ (and greater than this number), for $K_0$ a large absolute constant and for $\varepsilon>0$ small enough, we obtain
\begin{align*}
\limsup_{\substack{M\in \mathcal{M}\\M\to \infty}}\mathbb{E}_{m\leq M} & \sup_{\alpha\in C} \Big|\frac{1}{H}\sum_{h\leq H}\lio(m+h)e(\alpha h)\Big|\\
\leq& K_0\varepsilon^{1/4}+\limsup_{\substack{M\in \mathcal{M}\\M\to \infty}}\frac{1}{M}\left|\left\{m\leq M\colon  \,\, \max_{j\leq J}\left|\sum_{h\leq H}\lio(m+h)e(\alpha_jh)\right|\geq \varepsilon^{1/4} H\right\}\right|\\
\leq& K_0\varepsilon^{1/4}+\sum_{j\leq J} \limsup_{\substack{M\in \mathcal{M}\\M\to \infty}}\frac{1}{M}\left|\left\{m\leq M\colon  \,\, \left|\sum_{h\leq H}\lio(m+h)e(\alpha_jh)\right|\geq \varepsilon^{1/4} H\right\}\right|.
\end{align*}
By Lemma~\ref{le:chebyshev}, this is
\begin{align*}
\leq K_0\varepsilon^{1/4}+O(\varepsilon^{-1/2}J/H)\ll \varepsilon^{1/4},
\end{align*}
recalling that $J\leq 2 \varepsilon^{3/4} H$. This completes the proof.    
\end{proof}

\section{Proof of Theorem~\ref{thm:packingdimension}}

\begin{proof}[Proof of Theorem~\ref{thm:packingdimension}]
Let $\mathcal{M}\subset \mathbb{N}$ with $\delta(\mathcal{M})=1$ be as guaranteed by
Lemma~\ref{le:chebyshev}.
We must show that for any $\varepsilon>0$ and for any $C\subset [0,1]$ of upper box-counting dimension $<1/t$, we have
\begin{align}\label{eq1b}
\limsup_{H\to \infty}\limsup_{\substack{M\in \mathcal{M}\\M\to \infty}}\mathbb{E}_{m\leq M}\sup_{\alpha\in C}\bigg|\frac{1}{H}\sum_{h\leq H}\lio(m+h)e(\alpha h^t)\bigg|\leq \varepsilon.
\end{align}

Since $C$ has upper box-counting dimension $<1/t$, for any large enough $H$ there is some $J\leq \varepsilon^{4}H$ and some $\alpha_1,\ldots, \alpha_J\in \mathbb{R}$ such that we have
\begin{align*}
C\subset \bigcup_{j\leq J}\left[\alpha_j,\alpha_j+\frac{\varepsilon^2}{H^{t}}\right].
\end{align*}
Note that since $e(\beta)=1+O(|\beta|)$ we have
\begin{align*}
\sup_{\alpha\in C}\left|\sum_{h\leq H}\lio(m+h)e(\alpha h^t)\right|\leq \max_{j\leq J} \left|\sum_{h\leq H}\lio(m+h)e(\alpha_jh^t)\right|+O(\varepsilon^2 H).
\end{align*}

If we let
\begin{align*}
S_{\alpha}=\left\{m\in\mathbb{N}\colon  \,\, \left|\frac{1}{H}\sum_{h\leq H}\lio(m+h)e(\alpha h^t)\right|\geq \frac{\varepsilon}{2}\right\}
\end{align*}
then, for $\varepsilon>0$ small enough and $H$ sufficiently large, by the union bound, we have
\begin{align*}
\limsup_{\substack{M\in \mathcal{M}\\M\to \infty}}\mathbb{E}_{m\leq M}\sup_{\alpha\in C} & \left|\frac{1}{H}\sum_{h\leq H}\lio(m+h)e(\alpha h^t)\right|\\
\leq& \limsup_{\substack{M\in \mathcal{M}\\M\to \infty}}\mathbb{E}_{m\leq M}\max_{j\leq J}\left|\frac{1}{H}\sum_{h\leq H}\lio(m+h)e(\alpha_j h^t)\right|\,+\,O(\varepsilon^2)\\
\leq& \frac{\varepsilon}{2}+\limsup_{\substack{M\in \mathcal{M}\\M\to \infty}}
\mathbb{E}_{m\leq M} \max_{j\leq J} \raz_{S_{\alpha_j}}(m)\,+\,O(\varepsilon^2 H)
\\
\leq& \frac{\varepsilon}{2}+\sum_{j\leq J}\biggl(\limsup_{\substack{M\in \mathcal{M}\\M\to \infty}}
\mathbb{E}_{m\leq M}  \raz_{S_{\alpha_j}}(m)\biggr).
\end{align*}
Applying Lemma~\ref{le:chebyshev} with $a(h)=e(\alpha_j h^t)$, this is
\begin{align*}
\leq \frac{\varepsilon}{2}+O\left(\varepsilon^{-2}\frac{J}{H}\right)\leq \varepsilon  \end{align*}
for $\varepsilon>0$ small enough, recalling that $J\leq \varepsilon^{4} H$.
\end{proof}

\section{Optimality of results}\label{sec:implication}

In this section, we prove two theorems showing that our results on the logarithmic local $1$-Fourier uniformity problem cannot be extended much without settling the full conjecture. One of them is Theorem~\ref{th:implication}, stated in the introduction. The other one is the following implication.

\begin{Th}\label{th:assum} Suppose that there is a measurable set $C\subset \T$ of positive Lebesgue measure such that for all Dirichlet characters $\chi$ we have
\begin{align}\label{eq:assum}
\limsup_{H\to \infty}\limsup_{M\to \infty}\mathbb{E}_{m\leq M}^{\log} \sup_{\alpha\in C}|\mathbb{E}_{h\leq H}\lio(m+h)\chi(m+h)e(h\alpha)|=0.   
\end{align}
Then the logarithmic local $1$-Fourier uniformity conjecture holds. 
\end{Th}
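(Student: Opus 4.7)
Let $A_H(m,\alpha,\chi):=\EE_{h\leq H}\lio(m+h)\chi(m+h)e(h\alpha)$, so the hypothesis says $\limsup_{H\to\infty}\limsup_{M\to\infty}\EE^{\log}_{m\leq M}\sup_{\alpha\in C}|A_H(m,\alpha,\chi)|=0$ for every Dirichlet character $\chi$, and the target is the same quantity with $\chi=1$ and $\sup_{\alpha\in C}$ replaced by $\sup_{\beta\in\T}$. The plan is to exploit the complete multiplicativity of $\lio$ to dilate $C$ by primes, with the Dirichlet-character twists in the hypothesis absorbing the sieve errors that appear at each dilation step.

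Splitting the range of $h$ by whether $p\mid m+h$ and substituting $m+h=pk$ in the divisible part (so that $\lio(pk)=-\lio(k)$ and $\chi(pk)=\chi(p)\chi(k)$) yields the dilation identity: for any Dirichlet character $\chi$ and any prime $p$ coprime to the modulus of $\chi$,
\[
A_H(m,\beta,\chi) = A_H(m,\beta,\chi\chi_{0,p}) - \tfrac{\chi(p)}{p}\,e(\phi_{p,m,\beta})\,A_{H/p}(\lceil m/p\rceil,\, p\beta,\, \chi) + O(1/H),
\]
where $\chi_{0,p}$ denotes the principal Dirichlet character mod $p$. Iterating along the first $k$ primes $p_1<\cdots<p_k$ decomposes
\[
A_H(m,\beta,1) = A_H(m,\beta,\chi_{0,P_k}) - \sum_{i=1}^{k}\tfrac{1}{p_i}e(\phi_i)\,A_{H/p_i}(m_i,\, p_i\beta,\, \chi_{0,P_{i-1}}) + O(k/H),
\]
with $P_j:=p_1\cdots p_j$. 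The main term obeys the sieve bound $|A_H(m,\beta,\chi_{0,P_k})|\leq\prod_{i\leq k}(1-1/p_i)+o_{H\to\infty}(1)$, which tends to $0$ as $k\to\infty$ by Mertens' theorem. Each correction term is controlled in $\EE^{\log}_m$ by the hypothesis applied with Dirichlet character $\chi_{0,P_{i-1}}$ and $\alpha=p_i\beta$, \emph{provided} $p_i\beta\in C$.

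To match the sup over $\alpha\in C$ in the hypothesis with the sup over $\beta\in\T$ in the conclusion, I would split into two cases. For irrational $\beta$, Vinogradov's theorem on the equidistribution of $\{p\beta\bmod 1\}_{p\text{ prime}}$ gives that the density among primes of those $p$ with $p\beta\in C$ equals $\Leb(C)>0$, so primes $p_i(\beta)$ with $p_i\beta\in C$ are abundant and can be chosen with $\prod_i(1-1/p_i)$ as small as desired; the iteration then yields the pointwise bound. For rational $\beta=a/q$, one instead uses the Gauss-sum expansion $e(na/q)=\varphi(q)^{-1}\sum_{\chi\bmod q}G(a,\chi)\overline{\chi(n)}$ valid for $\gcd(n,q)=1$, which reduces $A_H(m,a/q,1)$ to a linear combination of sums $A_H(m,0,\overline\chi)$; each such sum at $\alpha=0$ is then controlled by Matom\"aki--Radziwi\l{}\l{}'s theorem for the bounded multiplicative function $\lio\overline\chi$, with the contribution from $h$ with $\gcd(m+h,q)>1$ handled by another round of dilation.

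The main obstacle is uniformity in $\beta$: the primes $p_i(\beta)$ depend on $\beta$, so the iteration does not directly produce a bound for $\sup_{\beta\in\T}|A_H(m,\beta,1)|$. The resolution I envisage is to choose a universal set of primes $\{p\leq P(\delta)\}$ such that $\Leb(\bigcup_{p\leq P}C_p)\geq 1-\delta$, where $C_p:=\{\beta:p\beta\in C\}$ (this follows from ergodicity of multiplication on $\T$ combined with Vinogradov), apply the iteration uniformly over the ``good'' $\beta$'s in $\bigcup_{p\leq P}C_p$, and treat the residual bad set of measure $\leq\delta$ via the rational case together with a quantitative Vinogradov estimate near rationals of small denominator.
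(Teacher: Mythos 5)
Your proposal takes a genuinely different route from the paper's proof, and it has two real gaps. You try to exploit the complete multiplicativity of $\lio$ to \emph{dilate} $C$ by primes, whereas the paper instead uses the Dirichlet--character hypothesis to \emph{translate} $C$ by rationals: observing that $\raz_{m+h\equiv 1\pmod r}$ is a linear combination of $\chi(m+h)$ over $\chi\pmod r$ (and that the restriction on $m$ can be absorbed into the logarithmic average), the hypothesis~\eqref{eq:assum} directly yields $\limsup_H\limsup_M\EE^{\log}_{m\leq M}\sup_{\alpha\in C}|\EE_{h\leq H}\lio(m+h)e(h(\alpha+\beta))|=0$ for every rational $\beta$. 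Then Lebesgue's density theorem gives some $q$ with $\Leb\bigl([0,1]\setminus\bigcup_{b<q}(C+b/q)\bigr)\leq\varepsilon$, and the leftover set of measure $\leq\varepsilon$ is handled in one stroke by the quantitative Theorem~\ref{p:jt1b}, giving a total bound $\ll\varepsilon^{1/4}$. This is the route that actually turns ``positive measure'' into ``full torus''.

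The dilation route you propose is essentially the machinery the paper deploys for Theorem~\ref{th:implication} (where $C$ contains an \emph{interval}, and Elliott's inequality plays the role of your dilation identity), but it does not transfer to general positive-measure $C$ for two concrete reasons. First, your uniformity fix is insufficient: arranging $\Leb\bigl(\bigcup_{p\leq P}C_p\bigr)\geq 1-\delta$ only guarantees \emph{one} prime $p\leq P$ with $p\beta\in C$ for good $\beta$, which produces the sieve factor $1-1/p$, not something small. Making the main term $A_H(m,\beta,\chi_{0,P_k})$ genuinely small requires each good $\beta$ to have \emph{many} usable primes with diverging harmonic sum, which would need a concentration estimate for $\beta\mapsto\sum_{p\leq P}p^{-1}\raz_C(p\beta)$ that the proposal does not supply; moreover, since the usable primes depend on $\beta$, you face a combinatorial blowup (over subsets of $\{p\leq P\}$) when commuting $\sup_\beta$ with the iteration. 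Second, your treatment of the residual bad set is wrong in spirit: for $C$ an interval, Lemma~\ref{le:discrepancy} indeed localizes bad $\beta$ near small-denominator rationals, but for a general measurable $C$ of positive measure the set of $\beta$ with few usable primes up to $P$ is simply a set of small measure with no such arithmetic structure, so ``the rational case plus Vinogradov near rationals'' does not cover it; what you actually need is the small-measure bound of Theorem~\ref{p:jt1b}. Finally, note that your use of the Dirichlet-twist hypothesis only via principal characters wastes its main strength — the non-principal characters are exactly what enable the rational shifts that make the Lebesgue-density argument go through.
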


Morally speaking, the assumption~\eqref{eq:assum} is not much stronger than the case $\chi=1$ (for example, the proof of Corollary~\ref{p:uniflog1} carries through with a character twist), although we cannot prove a rigorous implication from the case $\chi=1$ to the general case.

\subsection{Proof of Theorem~\ref{th:assum}}

In this subsection, we prove Theorem~\ref{th:assum}. 

\begin{proof}[Proof of Theorem~\ref{th:assum}]
We first claim that under the assumption of the theorem we have
\begin{align}\label{eq:lincom}
\limsup_{H\to \infty}\limsup_{M\to \infty}\mathbb{E}_{m\leq M}^{\log}\sup_{\alpha\in C}|\mathbb{E}_{h\leq H}\lio(m+h)e(h\alpha)1_{h\equiv a\pmod r}|=0    
\end{align}
for any integers $a,r\geq 1$. Indeed,  we have
\begin{align*}
&\limsup_{H\to \infty}\limsup_{M\to \infty}\mathbb{E}_{m\leq M}^{\log}\sup_{\alpha\in C}|\mathbb{E}_{h\leq H}\lio(m+h)e(h\alpha)1_{h\equiv a\pmod r}|\\
&= \limsup_{H\to \infty}\limsup_{M\to \infty}\mathbb{E}_{m\leq M}^{\log}r1_{m\equiv -a+1\pmod r}
\sup_{\alpha\in C}|\mathbb{E}_{h\leq H}\lio(m+h)e(h\alpha)1_{h\equiv a\pmod r}|\\
&=\limsup_{H\to \infty}\limsup_{M\to \infty}\mathbb{E}_{m\leq M}^{\log}r1_{m\equiv -a+1\pmod r}
\sup_{\alpha\in C}|\mathbb{E}_{h\leq H}\lio(m+h)e(h\alpha)1_{m+h\equiv 1\pmod r}|
\\
&=0,
\end{align*}
since $1_{m+h\equiv 1\pmod r}$ is a finite linear combination of Dirichlet characters $\pmod r$ evaluated at $m+h$. Taking linear combinations of~\eqref{eq:lincom}, we now conclude that
\begin{align}\label{eq:rat}
\limsup_{H\to \infty}\limsup_{M\to \infty}\mathbb{E}_{m\leq M}^{\log}\sup_{\alpha\in C}|\mathbb{E}_{h\leq H}\lio(m+h)e(h(\alpha+\beta))|=0    
\end{align}
for every rational number $\beta$. 

Let $\varepsilon\in (0,1)$. Since $C$ is measurable and of positive measure, by Lebesgue's density theorem there exist integers $0\leq a\leq q-1$ such that for the interval $I=[a/q,(a+1)/q]$ we have $\Leb(C\cap I)\geq (1-\varepsilon) \Leb(I)$. Therefore we have
\begin{align}\label{eq:union}
\Leb\Bigg([0,1]\setminus \bigcup_{b=0}^{q-1}(C+\frac{b}{q})\Bigg)\leq \varepsilon.      
\end{align}
We now  estimate
\begin{align*}
&\limsup_{H\to \infty}\limsup_{M\to \infty}\mathbb{E}_{m\leq M}^{\log}\sup_{\alpha\in [0,1]}|\mathbb{E}_{h\leq H}\lio(m+h)e(h\alpha)|\\
&\leq \limsup_{H\to \infty}\limsup_{M\to \infty}\mathbb{E}_{m\leq M}^{\log}\sup_{\alpha\in \bigcup_{b=0}^{q-1} (C+b/q)}|\mathbb{E}_{h\leq H}\lio(m+h)e(h\alpha)|\\
&\quad +\limsup_{H\to \infty}\limsup_{M\to \infty}\mathbb{E}_{m\leq M}^{\log}\sup_{\alpha\in [0,1]\setminus \bigcup_{b=0}^{q-1} (C+b/q)}|\mathbb{E}_{h\leq H}\lio(m+h)e(h\alpha)|\\
&\leq \sum_{b=0}^{q-1}\limsup_{H\to \infty}\limsup_{M\to \infty}\mathbb{E}_{m\leq M}^{\log}\sup_{\alpha\in C}|\mathbb{E}_{h\leq H}\lio(m+h)e(h(\alpha+\frac{b}{q}))|\\
&\quad +\limsup_{H\to \infty}\limsup_{M\to \infty}\mathbb{E}_{m\leq M}^{\log}\sup_{\alpha\in [0,1]\setminus \bigcup_{b=1}^{q-1} (C+b/q)}|\mathbb{E}_{h\leq H}\lio(m+h)e(h\alpha)|.
\end{align*}
Each of the summands in the sum over $b$ is $0$ by~\eqref{eq:rat}. Moreover, the expression on the last line is by Theorem~\ref{p:jt1b} and~\eqref{eq:union},
\begin{align*}
\ll \Leb\Bigg([0,1]\setminus \bigcup_{b=1}^{q-1} (C+b/q)\Bigg)^{1/4}\leq \varepsilon^{1/4}.    
\end{align*}
Letting $\varepsilon\to 0$, the claim follows. 
\end{proof}

\subsection{Proof of Theorem~\ref{th:implication}}

In this subsection, we prove Theorem~\ref{th:implication}.

We begin with the following lemma about the discrepancy of the sequence $p\alpha\pmod 1$, where $p$ runs over primes. As usual, we define the discrepancy of a sequence $(x_k)_{k=1}^{K}\subset [0,1]$ by
\begin{align*}
\sup_{\substack{I\subset [0,1]\\I\textnormal{ interval}}}\left|\frac{1}{K}|\{k\leq K\colon x_k\in I\}|-\Leb(I)\right|.
\end{align*}

\begin{Lemma}\label{le:discrepancy} Let $\varepsilon\in (0,1)$ and $P\geq \varepsilon^{-10}$. Let $\alpha \in \mathbb{R}$. Then the discrepancy of the sequence $(p\alpha\pmod 1)_{p\leq P}$ is $\leq \varepsilon$ unless there exists an integer $1\leq \ell\ll \varepsilon^{-10}$ such that $\|\ell \alpha\|\ll \varepsilon^{-10}/P$.
\end{Lemma}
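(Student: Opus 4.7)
The plan is to combine the Erd\H{o}s--Tur\'an inequality with a Vinogradov-type bound on exponential sums over primes, following the classical route to quantitative Weyl equidistribution. By the Erd\H{o}s--Tur\'an inequality applied with cutoff $L = \lceil C_{0}/\varepsilon \rceil$ for a sufficiently large absolute constant $C_{0}$, if the discrepancy of $(p\alpha)_{p \leq P}$ exceeds $\varepsilon$ then there must exist some $\ell_{0} \leq L$ with
\[
\Bigl|\sum_{p \leq P} e(\ell_{0} p \alpha)\Bigr| \gg \frac{\varepsilon\, \pi(P)}{\log L} \gg \frac{\varepsilon^{2} P}{\log P}.
\]

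Next, I would apply Dirichlet's approximation theorem to $\beta := \ell_{0} \alpha$ with modulus $Q := \lfloor c \varepsilon^{10} P \rfloor$, which is at least $1$ thanks to the hypothesis $P \geq \varepsilon^{-10}$. This produces coprime integers $a, q$ with $1 \leq q \leq Q$ and $|\beta - a/q| \leq 1/(qQ)$. The Vinogradov--Vaughan bound for exponential sums over primes then gives
\[
\Bigl|\sum_{p \leq P} e(p\beta)\Bigr| \ll (\log P)^{C_{1}} \Bigl(\frac{P}{\sqrt{q}} + P^{4/5} + \sqrt{Pq}\Bigr).
\]
Since $P \geq \varepsilon^{-10}$, the term $P^{4/5}$ is $O(\varepsilon^{2} P)$, and for $q \leq Q = O(\varepsilon^{10} P)$ the term $\sqrt{Pq}$ is $O(\varepsilon^{5} P)$; both contributions fall strictly below the lower bound above. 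Hence the term $P/\sqrt{q}$ must dominate, forcing $q \ll \varepsilon^{-c}$ for some absolute constant $c$. Setting $\ell := q\ell_{0}$, we obtain $1 \leq \ell \leq qL \ll \varepsilon^{-c-1}$, and
\[
\|\ell\alpha\| = \|q\beta\| \leq q\, |\beta - a/q| \leq 1/Q \ll \varepsilon^{-10}/P,
\]
which is the desired conclusion after adjusting exponents.

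The hard part is the $(\log P)^{C_{1}}$ loss appearing in the Vinogradov--Vaughan bound, which is not bounded by $\varepsilon^{-O(1)}$ when $P$ is allowed to be arbitrarily large compared to $\varepsilon^{-10}$. To recover the polynomial bound $\varepsilon^{-10}$ stated in the lemma, one either invokes a log-free variant of Vinogradov's theorem (obtained via the Heath-Brown identity combined with log-free zero-density estimates for Dirichlet $L$-functions) or works with a slightly larger polynomial exponent that absorbs the logarithmic factors uniformly in the range $P \geq \varepsilon^{-10}$. The essential content of the lemma is the classical dichotomy between equidistribution of the primes dilated by $\alpha$ modulo $1$ and strong Diophantine approximation of $\alpha$ itself.
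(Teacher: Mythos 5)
Your outline — Erd\H{o}s--Tur\'an, then Dirichlet approximation, then a Vinogradov-type prime exponential sum bound — is exactly how the paper begins its proof, and the resulting tension you identify is precisely the real obstacle: the $(\log P)^{C_1}$ loss in the Vinogradov--Vaughan estimate cannot be controlled by $\varepsilon^{-O(1)}$ uniformly over $P\geq\varepsilon^{-10}$, since $P$ may be arbitrarily large relative to $\varepsilon$. However, neither of the remedies you propose actually closes the gap. ``Absorbing the logarithmic factors into a larger polynomial exponent'' fails for the reason you yourself give, and a ``log-free variant of Vinogradov's theorem'' for $\sum_{p\leq P}e(p\beta)$ on minor arcs is not a standard black-box that one can invoke; your sketch via Heath--Brown identity plus log-free zero-density estimates is not developed to the point where it is clear it would deliver what is needed. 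As it stands, the final step of your argument deduces $q\ll\varepsilon^{-c}$ from $P/\sqrt{q}\cdot(\log P)^{C_1}\gg\varepsilon^2 P/\log P$, which only yields $q\ll\varepsilon^{-4}(\log P)^{O(1)}$ and hence does not give $\ell=q\ell_0\ll\varepsilon^{-O(1)}$.

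The paper closes this gap with a case split on the size of $\varepsilon$ relative to $\log P$, choosing the Dirichlet modulus $Q=\varepsilon^5P/(\log P)^{10}$ so that the split works cleanly. When $\varepsilon\leq(\log P)^{-2}$, one has $(\log P)^{10}\leq\varepsilon^{-5}$, and the Dirichlet plus Vinogradov argument directly gives $\ell\ll\varepsilon^{-10}$ and $\|\ell\alpha\|\leq 1/Q\leq\varepsilon^{-10}/P$. When $\varepsilon>(\log P)^{-2}$, the same argument shows $\ell\ll\varepsilon^{-5}(\log P)^{10}\ll(\log P)^{20}$ is merely polylogarithmic in $P$; the paper then writes $\alpha=a/\ell+M/P$, decomposes the prime sum into short intervals and residue classes $\pmod\ell$, and applies the Siegel--Walfisz theorem (effectively log-free in the small-modulus regime) together with an evaluation of the Ramanujan sum to obtain the genuinely log-free estimate $\ll\varepsilon^{-2}/(\varphi(\ell)|M|)+\varepsilon^2$, whence $\ell,M\ll\varepsilon^{-10}$. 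This second-case refinement, which exploits the smallness of $\ell$ to replace the general Vinogradov bound with Siegel--Walfisz, is the concrete idea missing from your proposal.
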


\begin{proof}
By the Erd\H{o}s--Tur\'an inequality, if the discrepancy of $(p\alpha\pmod 1)_{p\leq P}$ is $> \varepsilon$, then for some integer $1\leq k\ll \varepsilon^{-2}$ we have 
\begin{align}\label{eq:ET}
\left|\mathbb{E}_{p\leq P}e(kp\alpha)\right|\gg \varepsilon^2.
\end{align}
Let $Q=\varepsilon^5P/(\log P)^{10}$. Then, 
Dirichlet's approximation theorem tells us that for some integers $1\leq \ell\leq Q$ and $a$ we have $|\alpha-a/\ell|\leq 1/(\ell Q)\leq 1/\ell^2$. Using a standard estimate for exponential sums of the primes~\cite[Theorem 13.6]{Iwaniec-Kowalski}\footnote{This is stated with the von Mangoldt weight, but a similar estimate holds for the unweighted prime sum.}, this implies that $\ell\ll \varepsilon^{-5}(\log P)^{10}$. But then
$$\Big|\alpha-\frac{a}{\ell}\Big|\leq \frac{\varepsilon^{-5}(\log P)^{10}}{P}.$$
This gives the desired claim if $\varepsilon\leq (\log P)^{-2}$. Suppose then that $\varepsilon>(\log P)^{-2}$.  Then 
we have $\alpha=a/\ell+M/P$ for some $(\log P)^{10}\leq |M|\ll (\log P)^{20}$. But now, splitting into short intervals and arithmetic progressions, we have
\begin{align*}
&\mathbb{E}_{p\leq P}e\left(k\left(\frac{a}{\ell}+\frac{M}{P}\right)p\right)\\
&=\sum_{\substack{1\leq j\leq \ell\\(j,\ell)=1}}e\left(\frac{akj}{\ell}\right)\frac{1}{P}\int_{1}^{P}\mathbb{E}_{t< p\leq t+\varepsilon^{2}M^{-1}P}1_{p\equiv j\pmod{\ell}}e\left(\frac{M}{P}t\right) d t+O(\varepsilon^2)\\
&\ll \frac{1}{\varphi(\ell)}\cdot \frac{\varepsilon^{-2}}{M}+O(\varepsilon^2),
\end{align*}
where for the last line we used the Siegel--Walfisz theorem, an evaluation of the Ramanujan sum, and the bounds $\varepsilon>(\log P)^{-2}$, $|M|\ll (\log P)^{20}$. 
Comparing with~\eqref{eq:ET}, we conclude that $\ell\ll \varepsilon^{-10}$ and $M\ll \varepsilon^{-10}$, so the claim follows. 
\end{proof}

\begin{proof}[Proof of Theorem~\ref{th:implication}]
Since $C$ has non-empty interior, there exist real numbers $0<a<b<1$ such that $[a,b]\subset C$. Hence by assumption we have
\begin{align}\label{eq:assumption}
\limsup_{H\to \infty}\limsup_{X\to \infty}\mathbb{E}_{x\leq X}^{\log}\sup_{\alpha \in [a,b]}|\mathbb{E}_{x<n\leq x+H}\lio(n)e(\alpha n)|=0.
\end{align}

Our task now is to show that for any function $\alpha\colon \mathbb{R}\to [0,1]$ we have
\begin{align}\label{eq:alphaclaim}
\limsup_{H\to \infty}\limsup_{X\to \infty}\mathbb{E}_{x\leq X}^{\log}|\mathbb{E}_{x<n\leq x+H}\lio(n)e(\alpha(x)n)|=0.
\end{align}
Let $\varepsilon>0$ be small, and let $P$ be large enough in terms of $\varepsilon$. We first claim that
\begin{align}\label{eq:Elliott}\begin{split}
&\limsup_{H\to \infty}\limsup_{X\to \infty}\mathbb{E}_{x\leq X}^{\log}\mathbb{E}_{p\leq P}^{\log}|\mathbb{E}_{x<n\leq x+H}\lio(n)e(\alpha(x)n)\\
&\quad +\mathbb{E}_{x/p<m\leq (x+H)/p}\lio(m)e(p\alpha(x)m)|\leq \varepsilon.
\end{split}
\end{align}
Indeed, by the complete multiplicativity of $\lio$ and Elliott's inequality~\cite[Proposition 2.5]{Ma-Ra-Ta} (with $f(n)=\lio(n)e(\alpha(x)n)$ and $\delta=(\log \log P)^{-1/10}$), for any $x\geq H\geq P\geq 3$ we have
\begin{align*}
-\mathbb{E}_{x/p< m\leq (x+H)/p}\lio(m)  e(\alpha(x)pm)
=\mathbb{E}_{x< n\leq x+H}\lio(n)e(\alpha(x)n)+O((\log \log P)^{-1/10})
\end{align*}
for all primes $p\leq P$ outside an exceptional set of $p$ with logarithmic sum $\ll (\log \log P)^{1/5}$. Now, averaging over $p\leq P$, we get
\begin{align*}
\mathbb{E}_{p\leq P}^{\log}|-\mathbb{E}_{x/p< m\leq (x+H)/p}\lio(m)e(\alpha(x)pm)-  \mathbb{E}_{x<n\leq x+H}  \lio(n) e(\alpha(x)n)|
\ll (\log \log P)^{-1/10}.
\end{align*}
Further taking logarithmic averages over $x\leq X$ and taking limsups the claim~\eqref{eq:Elliott} follows (since $P$ is large enough in terms of $\varepsilon$).

Now we have~\eqref{eq:Elliott}. Restricting the $p$ average in that estimate, we obtain
\begin{align}\label{eq:Elliott2}\begin{split}
\limsup_{H\to \infty}\limsup_{X\to \infty}\mathbb{E}_{x\leq X}^{\log}\mathbb{E}_{p\leq P}^{\log}1_{p\alpha(x)\in [a,b]\pmod 1} & \Big|\mathbb{E}_{x<n\leq x+H}\lio(n)e(\alpha(x)n)\\
&\quad +\mathbb{E}_{x/p<m\leq (x+H)/p}\lio(m)e(p\alpha(x)m)\Big|\leq \varepsilon.
\end{split}
\end{align}
By the dilation invariance of logarithmic averages and~\eqref{eq:assumption}, we can bound
\begin{align*}
&\limsup_{H\to \infty}\limsup_{X\to \infty}\mathbb{E}_{x\leq X}^{\log}\mathbb{E}_{p\leq P}^{\log}1_{p\alpha(x)\in [a,b]\pmod 1}|\mathbb{E}_{x/p<m\leq (x+H)/p}\lio(m)e(p\alpha(x)m)|\\
&=\limsup_{H\to \infty}\limsup_{X\to \infty}
\mathbb{E}_{y\leq X}^{\log}\mathbb{E}_{p\leq P}^{\log}1_{p\alpha(py)\in [a,b]\pmod 1}|\mathbb{E}_{y<m\leq y+H/p}\lio(m)e(p\alpha(py)m)|\\
&\leq \mathbb{E}_{p\leq P}^{\log}\limsup_{H\to \infty}\limsup_{X\to \infty}
\mathbb{E}_{y\leq X}^{\log}1_{p\alpha(py)\in [a,b]\pmod 1}|\mathbb{E}_{y<m\leq y+H/p}\lio(m)e(p\alpha(py)m)|
\\
&=0.
\end{align*}
Hence, by the triangle inequality,~\eqref{eq:Elliott2} implies
\begin{align}\label{eq:alphabound}
\limsup_{H\to \infty}\limsup_{X\to \infty}\mathbb{E}_{x\leq X}^{\log}\mathbb{E}_{p\leq P}^{\log}1_{p\alpha(x)\in [a,b]\pmod 1}|\mathbb{E}_{x<n\leq x+H}\lio(n)e(\alpha(x)n)|\leq \varepsilon.
\end{align}

 Introduce the sets 
\begin{align*}
\mathcal{X}_1&=\Big\{x\in [1,X]\colon \sum_{p\leq P}\frac{1_{p\alpha(x)\in [a,b]\pmod 1}}{p}\geq \varepsilon^{1/2} \log \log P+1\Big\},\\
\mathcal{X}_2&=[1,X]\setminus \mathcal{X}_1.
\end{align*}
Then it suffices to show that for $j\in \{1,2\}$ we have
\begin{align}\label{eq:jclaim}
\limsup_{H\to \infty}\limsup_{X\to \infty}\mathbb{E}_{x\leq X}^{\log}1_{\mathcal{X}_j}(x)|\mathbb{E}_{x<n\leq x+H}\lio(n)e(\alpha(x)n)|\leq \varepsilon^{1/2},
\end{align}
as  letting $\varepsilon\to 0$ the desired claim~\eqref{eq:alphaclaim} follows. 

Using~\eqref{eq:alphabound}, we have 
\begin{align}\label{eq:claims}\begin{split}
&\limsup_{H\to \infty}\limsup_{X\to \infty}\mathbb{E}_{x\leq X}^{\log}1_{\mathcal{X}_1}(x)|\mathbb{E}_{x<n\leq x+H}\lio(n)e(\alpha(x)n)|\\
&\leq \varepsilon^{-1/2}\limsup_{H\to \infty}\limsup_{X\to \infty}\mathbb{E}_{x\leq X}^{\log}1_{\mathcal{X}_1}(x)\mathbb{E}_{p\leq P}^{\log}1_{p\alpha(x)\in [a,b]\pmod 1}|\mathbb{E}_{x<n\leq x+H}\lio(n)e(\alpha(x)n)|\\
&\leq \varepsilon^{1/2}.
\end{split}
\end{align}
Hence, what remains to be shown is that
\begin{align}\label{eq:last}
\limsup_{H\to \infty}\limsup_{X\to \infty}\mathbb{E}_{x\leq X}^{\log}1_{\mathcal{X}_2}(x)|\mathbb{E}_{x<n\leq x+H}\lio(n)e(\alpha(x)n)|\leq \varepsilon^{1/2}.
\end{align}

Note that for $x\in \mathcal{X}_2$ there exists $P'\in [\log P, P/2]$ such that $p\alpha(x)\in [a,b]\pmod 1$ holds for $< \frac{b-a}{2}\frac{P'}{\log P'}$ primes $p\in [P',2P']$. By Lemma~\ref{le:discrepancy}, we conclude that there exists an absolute constant $C_0\geq 1$ such that, for each $x\in \mathcal{X}_2$, there is an integer $1\leq \ell\leq C_0\varepsilon^{-10}$ for which $\|\ell \alpha(x)\|\leq C_0 \varepsilon^{-10}/(\log P)$. Since $P$ is large enough in terms of $\varepsilon$, by splitting the sums of length $H$ into sums of length $(\log P)^{1/2}$ and applying the triangle inequality, we see that 
\begin{align*}
\limsup_{H\to \infty}\limsup_{X\to \infty} & \mathbb{E}_{x\leq X}^{\log}1_{\mathcal{X}_2}(x)|\mathbb{E}_{x<n\leq x+H} \lio(n)e(\alpha(x)n)|\\
&\leq \limsup_{X\to \infty}\mathbb{E}_{x\leq X}^{\log}1_{\mathcal{X}_2}(x)|\mathbb{E}_{x<n\leq x+(\log P)^{1/2}}\lio(n)e(\alpha(x)n)|+O(\varepsilon^2)\\
&\leq \sum_{1\leq k\leq \ell \leq C_0\varepsilon^{-10}}\limsup_{X\to \infty}\mathbb{E}_{x\leq X}^{\log}\Big|\mathbb{E}_{x<n\leq x+(\log P)^{1/2}}\lio(n)e\Big(\frac{kn}{\ell}\Big)\Big|+O(\varepsilon^2).
\end{align*}
But by the Matom\"aki--Radziwi\l{}\l{}--Tao estimate~\cite[Theorem 1.3]{MRT} for short exponential sums of the Liouville function, and the assumption that $P$ is large enough in terms of $\varepsilon$, this is $\leq \varepsilon$ if  $\varepsilon>0$ is small enough. This gives~\eqref{eq:last}, completing the proof.
\end{proof}

\section{An alternative proof of Corollary~\ref{p:uniflog1}}
Let us recall some basic notions of topological dynamics and ergodic theory.
Let $T$ be a homeomorphism of a compact metric space $X$. By $M(X)$ we denote the space of all (Borel) probability measures on $X$. $M(X)$ endowed with the weak-$\ast$ topology is compact and the set $M(X,T)$ of $T$-invariant Borel probability measures on $X$ is a non-empty and closed subset of it.

Any member $\mu\in M(X,T)$ yields a measure-theoretic system $(X,\mu,T)$. Moreover, $T$ induces a unitary operator $U_T(f)=Tf\coloneqq f\circ T$ on $L^2(X,\mu)$. Then, the Herglotz theorem implies that each $f$ determines a unique (Borel) positive  finite measure $\sigma_f$ on $\bs^1$ whose Fourier transform is given by
$$
\widehat{\sigma}_f(n)\coloneqq \int_{\bs^1}z^n\,d\sigma_f(z)=\int_X T^nf\cdot\ov{f}\,d\mu\quad \text{for all}\quad n\in\Z.$$
Among spectral measures there are maximal ones (with respect to the absolute continuity relation). Those maximal elements are called measures of maximal spectral type which are all mutually absolutely continuous with respect to one another.
Recall that $(X,\mu,T)$ is rigid along $(q_n)$  if $T^{q_n}f\to f$ in $L^2(X,\mu)$ for each $f\in L^2(X,\mu)$. As $\widehat{\sigma}_f(q_n)\to1$ for each $f\in L^2(X,\mu)$ with $\|f\|=1$, by the Riemann--Lebesgue lemma, it follows that rigid systems have singular maximal spectral type.

If $(X',\mu',T')$ is another measure-theoretic system, then by a joining of it with $(X,\mu,T)$ we mean an element of $\rho\in M(X'\times X,T'\times T)$ such that its projections on $X'$ and $X$ are $\mu'$ and $\mu$, respectively. Clearly, $(X'\times X,\rho,T'\times T)$ is a measure-theoretic dynamical system.

\subsection{Lemmas}
\begin{Lemma}\label{l:zid}
Assume that $(X,\mu,T)$ and $(Y,\nu, \textnormal{Id})$ are two dynamical systems. Let $\rho$ be a joining of $T$ and $\textnormal{Id}$. Then the maximal spectral types of $T$ and of $T\times \textnormal{Id}$ are the same.\end{Lemma}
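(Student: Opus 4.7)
The plan is to verify the two absolute-continuity inclusions between $\sigma^{\max}_{T,\mu}$ and $\sigma^{\max}_{T\times\textnormal{Id},\rho}$. The first, $\sigma^{\max}_{T,\mu}\ll\sigma^{\max}_{T\times\textnormal{Id},\rho}$, is immediate: the map $V\colon L^2(X,\mu)\to L^2(X\times Y,\rho)$ given by $Vf(x,y)=f(x)$ is an isometry (because $\rho$ has marginal $\mu$ on $X$) that intertwines $U_T$ with $U_{T\times\textnormal{Id}}$. Thus $V(L^2(X,\mu))$ is a $U_{T\times\textnormal{Id}}$-invariant subspace realizing the full spectral type of $U_T$.

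For the reverse inclusion, I would disintegrate the joining as $\rho=\int_Y\rho_y\,d\nu(y)$ over the projection onto $Y$. Since $T\times\textnormal{Id}$ preserves $\rho$ and acts trivially on the second coordinate, each $\rho_y$ is $T$-invariant for $\nu$-a.e.\ $y$, and the marginal condition gives $\mu=\int_Y\rho_y\,d\nu(y)$. This disintegration yields an isomorphism $L^2(X\times Y,\rho)\cong\int_Y^{\oplus}L^2(X,\rho_y)\,d\nu(y)$, under which $U_{T\times\textnormal{Id}}$ becomes the direct integral of the Koopman operators of $T$ on each $L^2(X,\rho_y)$. For any $F\in L^2(\rho)$, writing $F_y(x)\coloneqq F(x,y)$, a direct computation using disintegration gives
\[
\widehat{\sigma}_F(n)=\int_Y\langle U_T^n F_y,F_y\rangle_{L^2(\rho_y)}\,d\nu(y),
\]
so the spectral measure of $F$ equals $\int_Y\sigma_{F_y}^{(\rho_y)}\,d\nu(y)$ and consequently $\sigma^{\max}_{T\times\textnormal{Id},\rho}$ is equivalent to $\int_Y\sigma^{\max}_{T,\rho_y}\,d\nu(y)$.

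To match this with $\sigma^{\max}_{T,\mu}$, the plan is to invoke the ergodic decomposition. Writing $\rho_y=\int_{\Omega}\mu_\omega\,d\tau_y(\omega)$ for the ergodic decomposition of $\rho_y$, the standard direct-integral formula for spectral theory yields $\sigma^{\max}_{T,\rho_y}\sim\int_{\Omega}\sigma^{\max}_{T,\mu_\omega}\,d\tau_y(\omega)$. Uniqueness of the ergodic decomposition together with $\mu=\int_Y\rho_y\,d\nu(y)$ forces the ergodic decomposition of $\mu$ to take the form $\mu=\int_{\Omega}\mu_\omega\,d\tau(\omega)$ with $\tau=\int_Y\tau_y\,d\nu(y)$. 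Combining these identities yields
\[
\sigma^{\max}_{T,\mu}\sim\int_{\Omega}\sigma^{\max}_{T,\mu_\omega}\,d\tau(\omega)=\int_Y\!\int_{\Omega}\sigma^{\max}_{T,\mu_\omega}\,d\tau_y(\omega)\,d\nu(y)\sim\int_Y\sigma^{\max}_{T,\rho_y}\,d\nu(y)\sim\sigma^{\max}_{T\times\textnormal{Id},\rho},
\]
establishing the reverse inclusion and hence the lemma.

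The main technical points to be handled carefully will be the joint measurability of the map $y\mapsto\tau_y$ (ensuring the iterated integral $\tau=\int_Y\tau_y\,d\nu(y)$ is well-defined as a probability on the space of ergodic measures) and the direct-integral description of maximal spectral types; both are standard in the Polish setting under consideration. Note that one cannot simply argue via $\rho_y\ll\mu$, since this is false in general (as the diagonal joining $\rho_y=\delta_y$ with $T=\textnormal{Id}$ shows), so routing through ergodic decompositions is essential.
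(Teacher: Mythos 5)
Your proof is correct, but it is considerably heavier than the paper's. The paper observes that for any $F=f\otimes g$ with $g$ unimodular, the computation
\[
\int F\circ(T\times \mathrm{Id})^n\,\overline{F}\,d\rho
=\int f(T^nx)\overline{f(x)}\,|g(y)|^2\,d\rho(x,y)
=\int f(T^nx)\overline{f(x)}\,d\mu(x)
\]
shows $\sigma_F^{(\rho)}=\sigma_f^{(\mu)}$; since such $F$ linearly span a dense subspace of $L^2(\rho)$ (every bounded $g$ is a linear combination of unimodular functions, and $L^2(\mu)\otimes L^\infty(\nu)$ is dense), both absolute-continuity inclusions drop out at once with no appeal to disintegration or ergodic decomposition. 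Your argument instead disintegrates $\rho$ over $Y$, passes to a direct-integral picture of $U_{T\times\mathrm{Id}}$, and then further invokes the ergodic decomposition of each fiber measure and of $\mu$, together with a measurable-selection argument for $y\mapsto\tau_y$. This is all sound (and your warning that one cannot shortcut via $\rho_y\ll\mu$, illustrated by the diagonal joining for $T=\mathrm{Id}$, is a genuinely useful observation), but the machinery --- measurable fields of Hilbert spaces, direct-integral descriptions of maximal spectral type, uniqueness and measurability of the ergodic decomposition --- is all avoidable here. The one thing your route buys is that it makes the fiberwise spectral structure explicit, which could be useful in a more refined statement; for the lemma as stated, the paper's tensor-product trick is the proof of choice.
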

\begin{proof} Consider $F=f\ot g$ with $|g|=1$. We have
$$
\int F\circ (T\times \textnormal{Id})^n \ov{F}\,d\rho=
\int f(T^nx)\ov{f(x)}\cdot|g(y)|^2\,d\rho(x,y)=\int f(T^nx)\ov{f(x)}\,d\mu(x),$$
so the spectral measure of $F$ is the same as that of $f$.\end{proof}

\begin{Lemma}\label{l:supinside2}
Let $G$ be a compact abelian group and $C$ a closed subset of it. Let $T\colon  C\times G\to C\times G$ be given by $T(x,g)=(x,g+x)$. Then $T$ is a homeomorphism of $C\times G$ and for each $\rho\in M(C\times G,T)$ the maximal spectral type  of the unitary operator $U_T$ acting on $L^2(C\times G,\rho)$ is equal to\footnote{By $\chi_\ast(\sigma)$ we denote the image of $\sigma$ via the map $\chi$.}
$$
\sigma_T=\sum_{\chi\in \widehat{G}} a_\chi \chi_\ast(\sigma),$$
where $a_\chi>0$, $\sum_{\chi\in\widehat{G}}a_\chi<+\infty$ and $\sigma=\pi_\ast(\rho)$ with $\pi(x,g)=x$.
\end{Lemma}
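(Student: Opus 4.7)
The plan is to exhibit a convenient countable total family in $L^2(C\times G,\rho)$ and compute its spectral measures directly. First, $T$ is a homeomorphism of $C\times G$ since addition on $G$ is continuous and $T^{-1}(x,g)=(x,g-x)$. As $C\times G$ is compact metric, $G$ is metrizable and $\widehat{G}$ is countable, so I will fix a countable dense subset $\mathcal{D}\subset C(C)$ containing the constant function $1$. By the Stone--Weierstrass theorem, the algebra generated by $\{f(x):f\in C(C)\}$ and $\{\chi(g):\chi\in\widehat{G}\}$ is dense in $C(C\times G)$, so
\[
\mathcal{F}:=\bigl\{F_{f,\chi}(x,g):=f(x)\chi(g)\ :\ f\in\mathcal{D},\ \chi\in\widehat{G}\bigr\}
\]
is a countable total family in $L^2(C\times G,\rho)$.

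Next I will compute $\sigma_{F_{f,\chi}}$ directly. Using $T^n(x,g)=(x,g+nx)$ and $\chi(g+nx)=\chi(g)\chi(x)^n$, together with $|\chi(g)|=1$ and $\pi_\ast(\rho)=\sigma$, one gets
\[
\widehat{\sigma}_{F_{f,\chi}}(n)=\int F_{f,\chi}\circ T^n\cdot\overline{F_{f,\chi}}\,d\rho=\int_C|f(x)|^2\chi(x)^n\,d\sigma(x)=\widehat{\chi_\ast(|f|^2\sigma)}(n),
\]
so that $\sigma_{F_{f,\chi}}=\chi_\ast(|f|^2\sigma)\ll\chi_\ast(\sigma)$, with equality when $f\equiv 1$: $\sigma_{F_{1,\chi}}=\chi_\ast(\sigma)$ exactly.

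Finally, I will invoke the standard fact from spectral theory that for any countable total family $\{F_n\}$ in $L^2$ and any weights $a_n>0$ with $\sum_n a_n\|F_n\|_2^2<+\infty$, the maximal spectral type of $U_T$ is equivalent to $\sum_n a_n\sigma_{F_n}$. One direction is immediate since $\sigma_F\ll\sigma_{U_T}$ for every $F$; for the other, any Borel $E\subset\mathbb{S}^1$ with $\sigma_{U_T}(E)>0$ gives a nonzero spectral projection $P_E$, and totality of $\{F_n\}$ forces $\sigma_{F_n}(E)=\|P_E F_n\|^2>0$ for at least one $n$. Applying this to $\mathcal{F}$ and combining with the previous paragraph, each summand satisfies $\sigma_{F_{f,\chi}}\ll\chi_\ast(\sigma)$ while $\chi_\ast(\sigma)$ itself appears as a summand (via $F_{1,\chi}\in\mathcal{F}$), so the maximal spectral type of $U_T$ is equivalent to a measure of the form $\sum_{\chi\in\widehat{G}}a_\chi\chi_\ast(\sigma)$ with $a_\chi>0$ and $\sum_\chi a_\chi<+\infty$, as claimed.

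The only points requiring mild care are the countability of $\widehat{G}$ (handled by metrizability of $C\times G$) and the invocation of the spectral theorem via total rather than cyclic families; otherwise the entire argument reduces to the one-line computation of $\widehat{\sigma}_{F_{f,\chi}}(n)$ above, so no real obstacle is expected.
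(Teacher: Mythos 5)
Your proof is correct and takes essentially the same approach as the paper: the key step in both is the computation $\widehat{\sigma}_{f\otimes\chi}(n)=\int|f|^2\chi^n\,d\sigma$, giving $\sigma_{f\otimes\chi}=\chi_\ast(|f|^2\sigma)\ll\chi_\ast(\sigma)=\sigma_{1\otimes\chi}$, after which the form of the maximal spectral type follows from standard spectral theory applied to the countable total family $\{f\otimes\chi\}$. You merely spell out the routine totality/spectral-projection details that the paper compresses into "and the result follows."
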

\begin{proof} Let $F(x,g)=f(x)\chi(g)$ with $\chi\in \widehat{G}$. Then
$$
\int F(T^n(x,g))\ov{F(x,g)}\,d\rho(x,g)=\int \chi(nx)|f(x)|^2\,d\rho(x,g)
$$
$$
=\int (\chi(x))^n|f(x)|^2\,d\rho(x,g)=\int (\chi(x))^n|f(x)|^2\,d\sigma(x)=
\int z^n\, d\chi_\ast(|f(x)|^2\sigma),$$
so
$$
\sigma_F=\chi_\ast(|f(x)|^2\sigma)\ll \chi_\ast(\sigma)=\sigma_{1\otimes \chi}$$
and the result follows.
\end{proof}

\begin{Lemma} \label{l:supinside3}
Let $(X,T)$ be a topological system in which for all $\nu\in M(X,T)$ the corresponding measure-theoretic dynamical system $(X,\nu, T)$ has singular spectral type.
Then $(X,T)$ satisfies the  logarithmic strong LOMO property.
\end{Lemma}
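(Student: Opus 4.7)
The plan is to derive the logarithmic strong LOMO property from Tao's logarithmic two-point Chowla theorem~\cite{Ta} via its dynamical interpretation for singular-spectrum systems, as formulated in Corollary~3.25 of~\cite{Fe-Ku-Le}. First, I would apply the logarithmic version of Lemma~\ref{l:ciagi} with $B=C(X)$ and $z_n\in C(X)$ defined by $z_n(x)=f(T^nx)\lio(n)$. Noting that the $C(X)$-norm of $\sum_{b_k\leq n<b_{k+1}} z_n/n$ coincides with the supremum over $x_k\in X$ of the block sum appearing in the logarithmic strong LOMO expression (achieved by optimizing each $x_k$ block-by-block), the logarithmic strong LOMO property for $(X,T)$ is equivalent to the local-average statement
\[
\lim_{H\to\infty}\limsup_{M\to\infty}\EE^{\log}_{m\leq M}\sup_{x\in X}\Bigl|\EE_{h\leq H}f(T^{m+h}x)\lio(m+h)\Bigr|=0
\]
for every $f\in C(X)$.

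Next, for each $m$ and $H$, I would pick $x_{m,H}\in X$ that nearly achieves the supremum and set $y_m=T^m x_{m,H}$, reducing the task to proving that for any sequence $(y_m)\subset X$
\[
\lim_{H\to\infty}\limsup_{M\to\infty}\EE^{\log}_{m\leq M}\Bigl|\EE_{h\leq H}f(T^hy_m)\lio(m+h)\Bigr|=0.
\]
I would then absorb the moving orbit into a single topological system: let $(\tilde X,\tilde T)$ be the shift system on $X^{\Z}$ restricted to the canonically embedded copy $\iota(X)$ of $X$, where $\iota(y)=(T^ky)_{k\in\Z}$; since $\iota$ intertwines $T$ with $\tilde T$, the system $(\iota(X),\tilde T)$ is conjugate to $(X,T)$ and hence has singular maximal spectral type for every invariant measure. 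The key observation is that any logarithmic Furstenberg limit of the empirical distributions $\tfrac{1}{\log M}\sum_{m\leq M}\tfrac{1}{m}\delta_{\iota(y_m)}$, coupled with an identity factor encoding the arbitrariness of $(y_m)$, still has singular spectrum by Lemma~\ref{l:zid}.

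Finally, I would invoke the dynamical form of Tao's theorem (Corollary~3.25 of~\cite{Fe-Ku-Le}) for this singular-spectrum auxiliary system, applied to the continuous observable $F(\tilde z)=f(\tilde z_0)$, to conclude the requisite logarithmic Liouville orthogonality; unwinding definitions then yields the desired vanishing. The main obstacle is the second step: verifying that the auxiliary system genuinely inherits singular spectrum despite the arbitrariness of $(y_m)$. Lemma~\ref{l:zid} is the crucial structural input here, guaranteeing that coupling a $T$-system with an identity factor does not alter the maximal spectral type, so that the cited corollary remains applicable to the enlarged system.
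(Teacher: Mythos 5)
Your proposal shares the right spectral ideas with the paper's proof (Lemma~\ref{l:zid} and the dynamical incarnation of two-point Chowla via the fact that $\pi_0$ has Lebesgue spectral measure on any logarithmic Furstenberg system of $\lio$), but the central construction has a genuine gap. First, the auxiliary system you build — the shift on $X^\Z$ restricted to $\iota(X)$ — is \emph{conjugate} to $(X,T)$, so it does not ``absorb the moving orbit'': encoding $X$ inside a shift does not turn the sequence $(\iota(y_m))_{m\geq1}$ into a $\tilde{T}$-orbit. Consequently, the logarithmic empirical measures $\frac{1}{\log M}\sum_{m\leq M}\frac{1}{m}\delta_{\iota(y_m)}$ do \emph{not} have $\tilde T$-invariant weak-$\ast$ limits for an arbitrary sequence $(y_m)$ (take, say, $y_m\equiv y_0$: the limit is a Dirac mass, typically not invariant). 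The spectral disjointness argument, and in particular Corollary~3.25 of~\cite{Fe-Ku-Le}, requires an invariant measure to apply, so this step fails as stated. Moreover, the object you need to study is not the marginal distribution of $(y_m)$ alone but a \emph{joint} empirical measure coupling the $X$-coordinate to the shifted Liouville sequence; without $X_{\lio}$ in the auxiliary system, there is nothing for the observable $\pi_0$ to live on, and Corollary~3.25 in the form you quote (plain logarithmic Liouville orthogonality for $(X,T)$ along honest orbits) is strictly weaker than the strong LOMO statement being proved.

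The paper avoids all of this by \emph{not} pre-processing through Lemma~\ref{l:ciagi} (that lemma is applied only at the end, to derive the Fourier uniformity corollary). Instead it works directly with the LOMO block structure: it forms the empirical measures
\[
\frac{1}{\log b_K}\sum_{k<K}\sum_{b_k\leq n<b_{k+1}}\frac{1}{n}\,\delta_{(T\times\mathrm{Id}\times S)^n(x_k,a_k,\lio)}
\]
on $X\times Y\times X_{\lio}$, with $Y$ a three-point identity system used to align the block sums in a convex cone of angle $<\pi$ and thereby remove absolute values (via Lemma~18 of~\cite{Ab-Ku-Le-Ru}). Precisely because the set $\{b_k\}$ of block boundaries has zero logarithmic density, any weak-$\ast$ limit $\rho$ of these measures is $T\times\mathrm{Id}\times S$-invariant despite the arbitrariness of $(x_k)$, and is a joining of some $\nu\in M(X,T)$, some identity system, and a logarithmic Furstenberg system of $\lio$. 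Only then do Lemma~\ref{l:zid} (singularity of $\sigma_{\tilde f}$) and two-point Chowla (Lebesgue type of $\sigma_{\pi_0}$) give orthogonality in $L^2(\rho)$. To salvage your plan you would need to replace your empirical measures by this block-averaged joint construction; as written, the claim that your empirical measures have a singular-spectrum Furstenberg limit is unsupported.
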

\begin{proof}
Let $(x_k)\subset X$ and let $(b_k)\subset\N$ satisfy
$\delta(\{b_k\colon \,k\geq1\})=0$.
We want to show that
\beq\label{olala}
\frac1{\log b_K}\sum_{k<K}\Big|\sum_{b_k\leq n<b_{k+1}}\frac1nf(T^nx_k)\lio(n)\Big|\to0.\eeq
Consider the space $X\times Y$, with $Y=\{e^{2\pi ij/3}\colon \, j=0,1,2\}$ (on $Y$ we consider the action of identity). Let $((x_k,a_k))_{k\geq1}\subset X\times Y$  with $a_k$ to be specified shortly. Set $\tilde{f}(x,\eta)=f(x)\eta$ for $x\in X$ and $\eta\in Y$. Then
\begin{align*}
&\frac1{\log b_K}\sum_{k<K}\sum_{b_k\leq n<b_{k+1}}\frac{1}{n}\tilde{f}(T^n x_k,a_k)\lio(n)\\
=&\frac1{\log b_K}\sum_{k<K}a_k\sum_{b_k\leq n<b_{k+1}}\frac{1}{n}f(T^nx_k)\lio(n)
\end{align*}
and we can select $(a_k)\subset Y$ so that the values $a_k\sum_{b_k\leq n<b_{k+1}}\frac1nf(T^nx_k)$ lie in a fixed convex cone (in $\C$) of angle $<\pi$.
Let $S$ denote the left-shift on the symbolic shift-space $\{-1,1\}^\Z$ and let $X_{\lio}$ denote the orbit closure of $\lio$ under $S$ (where we view $\lio$ as an element of $\{-1,1\}^\Z$ by extending it to $\Z$ in an arbitrary way using $\pm1$).
In view of~\cite[Lemma~18]{Ab-Ku-Le-Ru},~\eqref{olala} is now equivalent to
\begin{align}\label{olala1}\begin{split}&\lim_{K\to\infty}\frac1{\log b_K}\sum_{k<K}a_k\sum_{b_k\leq n<b_{k+1}}\frac1nf(T^nx_k)\pi_0(S^n\lio)=0\\
=&\lim_{K\to\infty}\frac1{\log b_K}\sum_{k<K}\Big|\sum_{b_k\leq n<b_{k+1}}\frac1nf(T^nx_k)\pi_0(S^n\lio)\Big|.
\end{split}\end{align}

To compute the limit of the left-hand side above, consider the sequence
$$
\left(\frac1{\log b_K}\sum_{k<K}\sum_{b_k\leq n<b_{k+1}}\frac1n\delta_{(T\times \textnormal{Id}\times S)^n(x_k,a_k,\lio)}\right)_{K\geq1}\subset M(X\times Y\times X_{\lio}).$$
By passing to a subsequence if necessary, we can assume that this sequence converges to a measure $\rho$ which, by the zero logarithmic density of $(b_k)$, must be $T\times \textnormal{Id}\times S$-invariant. So, it is a joining of $\nu\in M(X,T)$, $\nu'\in M(Y, \textnormal{Id})$ and a Furstenberg system  $\kappa$ of $\lio$ \cite{Fr-Ho}; the latter is true because $\kappa\in M(X_{\lio},S)$, where $\kappa$ satisfies
$$
\kappa=\lim_{K\to\infty}\frac1{\log b_K}\sum_{k<K}\sum_{b_k\leq n<b_{k+1}}\frac1n\delta_{S^n\lio}=\lim_{K\to\infty}\frac1{\log b_K}\sum_{ n<b_{K}}\frac1n\delta_{S^n\lio}.$$

Hence, the limit of the left-hand side in~\eqref{olala1} is $\int \tilde{f}\ot\pi_0\,d\rho$. Because of Lemma~\ref{le:two-point}, the spectral measure $\sigma_{\pi_0}$ for $\pi_0$ understood as an element of $L^2(\rho)$ (this spectral measure is precisely the same as the spectral measure of $\pi_0$ when viewed as an element of $L^2(X_{\lio},\kappa)$ since $\rho$ is a joining) is the Lebesgue measure on the circle, see~\cite{Fe-Ku-Le}. On the other hand, by Lemma~\ref{l:zid} and our assumption that any measure in $M(X,T)$ yields a dynamical system of singular spectral type, the spectral measure $\sigma_{\tilde{f}}$  of $\tilde{f}\in L^2(\rho)$ is singular. Therefore, $\tilde{f}$ and $\pi_0$ are orthogonal and hence~\eqref{olala1} holds.
\end{proof}

\subsection{Proof of Corollary~\ref{p:uniflog1}}
We apply the above to $T(x,y)=(x,x+y)$ on $C\times \T$. In view of Lemma~\ref{l:supinside2}, for each invariant measure for $T$ the maximal spectral type of the measure-theoretic system corresponding to the measure is singular (as $C$ has Lebesgue measure zero, each measure on it must be singular with respect to the Lebesgue measure). It follows from Lemma~\ref{l:supinside3} that $(C\times \T,T)$ satisfies the logarithmic strong LOMO property, which we apply to $f(x,y)=e^{2\pi iy}$. Finally, use Lemma~\ref{l:ciagi}.\bez

\section{Proofs of  Theorem~\ref{p:uniflog2A} and Corollary~\ref{p:uniflog2}}

Theorem~\ref{p:uniflog2A} is an immediate consequence of the following lemma:
\begin{Lemma} \label{l:momocount}  Let $C=\{g_k\colon \,k\geq1\}\subset G$, where the  nil-rotations $L_{g_k}(g\Gamma)=g_kg\Gamma$ are ergodic. Then, the homeomorphism $T\colon  C\times G/\Gamma\to C\times G/\Gamma$, $T(g_k,g'\Gamma)=(g_k,g_kg'\Gamma)$ satisfies the strong LOMO property.
\end{Lemma}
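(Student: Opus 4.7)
The plan is to verify that $(X,T)\coloneqq(C\times G/\Gamma,T)$ lies in the class $\mathscr{A}_1$ of zero-entropy topological systems with countably many ergodic invariant measures, and then to use a joining argument in the spirit of the alternative proof of Corollary~\ref{p:uniflog1}, combined with the Frantzikinakis--Host theorem, to derive the (logarithmic) strong LOMO property.

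The first step is to classify the $T$-invariant probability measures on $X$. Each fiber $\{g\}\times G/\Gamma$ is $T$-invariant, and on it $T$ acts as the nil-rotation $L_g$, which is ergodic by hypothesis and hence uniquely ergodic by the classical theorem of Parry--Furstenberg on ergodic nil-rotations. Consequently every $T$-invariant measure is of the form $\mu\otimes m_{G/\Gamma}$ for some $\mu\in M(C)$, where $m_{G/\Gamma}$ is the Haar measure on $G/\Gamma$; since $C=\{g_k\colon k\geq 1\}$ is countable, the ergodic decomposition of any such $\nu$ is a countable convex combination of ergodic measures $\delta_{g_k}\otimes m_{G/\Gamma}$. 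In addition, $T$ has zero topological entropy, since it is a skew extension of $(C,\mathrm{Id})$ with nilsystem fibers (which are distal and hence of zero entropy). Thus $(X,T)\in\mathscr{A}_1$.

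With $(X,T)\in\mathscr{A}_1$ established, I would run the unimodular-weight joining argument to reduce the strong LOMO statement to a joining orthogonality. Given $(b_k)\subset\N$ with $\delta(\{b_k\})=0$, $(x_k)\subset X$, and $f\in C(X)$, introduce auxiliary unimodular weights $a_k\in Y\coloneqq\{1,e^{2\pi i/3},e^{4\pi i/3}\}$ to pass from the absolute-value expression to a signed sum; form the empirical measures on $X\times Y\times X_{\lio}$ and extract a subsequential weak-$\ast$ limit $\rho$, which is a joining of some $\nu\in M(X,T)$, the identity system on $Y$, and a Furstenberg system $\kappa$ of $\lio$. The required vanishing $\int\tilde f\otimes\pi_0\,d\rho=0$ is then obtained by combining the countable ergodic decomposition of $\nu$ from the previous paragraph with the Frantzikinakis--Host theorem, which supplies the necessary pointwise logarithmic Liouville orthogonality on each ergodic component $(G/\Gamma,m_{G/\Gamma},L_{g_k})$. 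Finally, Lemma~\ref{l:ciagi} translates the resulting logarithmic strong LOMO property back into the supremum formulation of Theorem~\ref{p:uniflog2A}.

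The main obstacle will be transferring the pointwise Liouville orthogonality provided by Frantzikinakis--Host (a statement about fixed base points in each ergodic nilsystem) into the joining orthogonality for $\rho$ above, where the base points $x_k$ vary with $k$. Concretely, one must control how the limit measure $\rho$ disintegrates over the (countable) ergodic decomposition of $\nu$, and show that on each ergodic fiber the joining with $\kappa$ inherits the Liouville orthogonality in a form strong enough to integrate back up. Note that the direct spectral route via Lemma~\ref{l:supinside3} (used in the alternative proof of Corollary~\ref{p:uniflog1}) is not available here, because for $t\geq 2$ ergodic nil-rotations need not have singular maximal spectral type, so the deeper structural input of Frantzikinakis--Host is genuinely required. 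A secondary technicality is footnote~1's restriction on the class of nilpotent Lie groups $G$, which the reductions developed in Appendix~\ref{a:nowy} should handle.
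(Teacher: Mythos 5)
Your first paragraph matches the paper: you establish that each fiber carries the unique invariant measure $m_{G/\Gamma}$ (by unique ergodicity of ergodic nil-rotations, Green/Leibman), so $T$ has countably many ergodic invariant measures, and you note zero entropy. You also correctly observe that the spectral route of Lemma~\ref{l:supinside3} is unavailable here, since for $t\geq 2$ ergodic nil-rotations generally have a Lebesgue spectral component. All of this is right.

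But from there you diverge from the paper and, more importantly, leave a genuine gap. You attempt to rebuild the strong LOMO conclusion from scratch via the empirical-measure joining argument (forming limit measures on $X\times Y\times X_{\lio}$ and seeking $\int \tilde f\otimes\pi_0\,d\rho=0$), and you explicitly acknowledge that you do not know how to ``transfer the pointwise Liouville orthogonality provided by Frantzikinakis--Host\dots into the joining orthogonality for $\rho$\dots where the base points $x_k$ vary with $k$.'' That unresolved transfer is precisely the content of the black box the paper invokes: \cite[Corollary 1.2]{Go-Le-Ru}, which states that the Frantzikinakis--Host theorem implies every zero-entropy system with countably many ergodic measures satisfies the \emph{logarithmic strong LOMO property} (not merely pointwise logarithmic Sarnak). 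Once you have countable ergodic measures and zero entropy, the paper is finished in one sentence by citing this corollary; your proposal stalls exactly at the step that corollary was designed to handle. If you want to avoid citing \cite{Go-Le-Ru}, you must actually carry out the orbital-model reduction of \cite{Ab-Ku-Le-Ru} showing that the invariant measures of the orbital system built from the $(b_k,x_k)$ data are again joinings of countably-many-ergodic-measure systems with a Furstenberg system of $\lio$, and then apply Frantzikinakis--Host ergodic-component by ergodic-component --- you have described this in outline but not executed it, and it is nontrivial. (Your remark about Appendix~\ref{a:nowy} is tangential: that reduction is needed for Theorem~\ref{p:uniflog2A}'s use of \cite[Theorem 2.20]{Ra}, not for the present lemma.)
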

\begin{proof} Because of our assumptions on the set $C$, the homeomorphism $T$ has only countably many ergodic measures. Indeed, it follows from~\cite{Gree,Le} that a nil-rotation $L_{g_k}$ is ergodic if and only if it is uniquely ergodic. Hence, for each $g_k\in C$, there is exactly one measure invariant on the fiber over $g_k$. Hence, by the work of Frantzikinakis and Host~\cite[Theorem 1.1]{Fr-Ho}, it satisfies the logarithmic Sarnak conjecture. In fact, as noticed in~\cite[Corollary 1.2]{Go-Le-Ru}, the theorem of Frantzikinakis and Host implies that all zero entropy systems with a countable set of ergodic measures satisfy the logarithmic strong LOMO property. It follows that $T$ satisfies the logarithmic strong LOMO property.\end{proof}

\begin{proof}[Proof of Corollary~\ref{p:uniflog2}]
Let $C=\{\alpha_k\colon  k\in\N\}\subset \T$ be closed with all $\alpha_k$ irrational.
Consider the following groups of $(d+1)\times(d+1)$ upper triangular matrices:
\[
G=\begin{pmatrix}
1	&\Z	&\Z	&\cdots 	&\Z&\Z	&\R\\
0	&1	&\Z	&\cdots	&\Z&\Z	&\R\\
0	&0	&1	&\cdots	&\Z&\Z	&\R\\
\vdots	&\vdots	&\vdots &\ddots	&\ddots	&\ddots	&\vdots\\
0	&0	&0	&\cdots		&1		&\Z		&\R\\
0	&0	&0	&\cdots		&0		&1		&\R\\
0	&0	&0	&\cdots		&0		&0		&1\\
\end{pmatrix}
,~~~~~~~
\Gamma=
\begin{pmatrix}
1	&\Z	&\Z	&\cdots 	&\Z&\Z	&\Z\\
0	&1	&\Z	&\cdots	&\Z&\Z	&\Z\\
0	&0	&1	&\cdots	&\Z&\Z	&\Z\\
\vdots	&\vdots	&\vdots &\ddots	&\ddots	&\ddots	&\vdots\\
0	&0	&0	&\cdots		&1		&\Z		&\Z\\
0	&0	&0	&\cdots		&0		&1		&\Z\\
0	&0	&0	&\cdots		&0		&0		&1\\
\end{pmatrix}.
\]

Note that $G$ is a $d$-step nilpotent Lie group generated by the connected component of the identity and a finitely generated torsion-free subgroup, and $\Gamma$ is a discrete and cocompact subgroup of $G$.
Through the diffeomorphic map
\[
\varphi\colon(x_1,\ldots,x_{d-1},x_d)\mapsto
\begin{pmatrix}
1&0&\cdots&0&x_d\\
0&1&\cdots&0&x_{d-1}\\
\vdots&\vdots&\ddots&\vdots&\vdots\\
0&0&\cdots&1&x_1\\
0&0&0&0&1
\end{pmatrix}\Gamma
\]
we can identify the nilmanifold $G/\Gamma$ with the torus $\T^d$.
Also, define
\[
g_k
=
\begin{pmatrix}
1&1&0&\cdots&0&0\\
0&1&1&\cdots&0&0\\
\vdots&\vdots&\ddots&\ddots&\vdots&\vdots\\
0&0&\cdots&1&1&0\\
0&0&\cdots&0&1&\alpha_1\\
0&0&0&0&0&1
\end{pmatrix}
\]
and note that the nil-rotation induced by $g_k$ on $G/\Gamma$ is ergodic and congruent, via $\varphi$, to the affine linear transformation
$T_k(x_1,x_2,\ldots, x_d)=(x_1+\alpha_k,x_2+x_1,\ldots, x_d+x_{d-1})$ on $\T^d$.

By applying Theorem~\ref{p:uniflog2A} to the nilmanifold $G/\Gamma$ and the countable closed set of ergodic nil-rotations $\{g_k\colon k\in\N\}\subset G$, and invoking the isomorphism $\varphi$, we get for any continuous function $f\colon \T^d\to\C$ that
\[
\lim_{H\to\infty}\limsup_{M\to\infty}~\E_{m\leq M}^{\log}~ \sup_{k\in\N}~\sup_{x\in\T^d}~\biggl|\E_{h\leq H}\lio(m+h)f(T_k^{m+h}x) \biggr|=0.
\]
Iterating the transformation $T_k$ yields
\begin{equation}
\label{eqn_cor_poly_cntbl_lead_coef_1}
T_k^n(x_1,x_2,\ldots, x_d)=
\biggl(
n\alpha_k+x_1,~\ldots,~~\binom{n}{d} \alpha_k +\sum_{i=1}^d \binom{n}{d-i}x_i
\biggr).
\end{equation}
So, by selecting $x=(x_1,\ldots,x_d)\in\T^d$ appropriately, we can achieve in the last coordinate of $T_k^n(x_1,x_2,\ldots, x_d)$ any polynomial of degree $d$ whose leading coefficient is $\alpha_k$. The conclusion of Corollary~\ref{p:uniflog2} now follows from~\eqref{eqn_cor_poly_cntbl_lead_coef_1} applied to the function $f(x_1,\ldots,x_d)=e(x_d)$.
\end{proof}

\subsection{\texorpdfstring{What happens if $C$ contains a rational number?}{What happens if C contains a rational number?}} \label{ss:zeronalezy}

We will now show that if~\eqref{unif7c} holds for some $t\geq 2$ and some set $C$ containing a rational number, then~\eqref{unif7c} holds with $t-1$ in place of $t$ with the full set $C=\T$.
So, while
\beq
\label{eqn_packdim_2}\lim_{H\to\infty}\limsup_{M\to\infty}\EE_{m\leq M}^{\log}\sup_{\alpha\in C}\big|\EE_{h\leq H}\lio(m+h)e(\alpha h^t)\big|=0\eeq
holds for $t=1$ and all closed sets $C\subset \T$ with $\Leb(C)=0$ by Theorem~\ref{thm:packingdimension}, we do not expect that our methods can prove~\eqref{eqn_packdim_2} for all closed sets $C\subset \T$ with $\Leb(C)=0$ in the case $t\geq 2$.

Let $a\in \mathbb{Z}$, $q\in \mathbb{N}$ be such that~\eqref{unif7c} holds with $t$ for the set $C=\left\{\frac{a}{q}\right\}$. Then, since the function $n\mapsto e\left(-\frac{a}{q}n^{t}\right)$ is $q$-periodic, we have a Fourier expansion
$$
1=\sum_{b=1}^{q}c_be\left(\frac{a}{q}n^{t}+\frac{bn}{q}\right)
$$
for some complex numbers $c_b$. Multiplying both sides by $e(Q(n))$, where $Q$ is any polynomial of degree $\leq t-1$, we see from the triangle inequality that~\eqref{unif7c} holds with $t-1$ in place of $t$ for the full set $C=\T$.

\section{Proof of Theorem~\ref{p:unif1}}
\subsection{Some Cantor sets and rigidity}

We are interested in $C\subset\T$ which are closed and for which there exists a sequence $(q_n)$ such that, for any $\alpha\in C$, we have
\beq\label{ri1}\lim_{n\to \infty}\|q_n\alpha\|=0.\eeq

\begin{Remark} In general, consider any strictly increasing sequence $k_n$ and let
$$C=\bigcap_{n\geq1}\{\alpha\in\T\colon \,\|2^{k_n}\alpha\|\leq 1/\ell_n\}.$$
Then the set $C$ is closed and it satisfies~\eqref{ri1} if $\ell_n\to\infty$. Some information about Hausdorff dimension of such sets can be found in~\cite{Li-Ra}.
In Appendix~\ref{App:B}, using  rather standard tools, we will present constructions of Cantor sets satisfying~\eqref{ri1} and having full Hausdorff dimension.
\end{Remark}

\begin{Lemma} \label{l:ri2} If $C$ satisfies~\eqref{ri1} then for  all invariant measures $\nu$ of the homeomorphism $T(x,y)=(x,x+y)$ acting on $C\times\T$  the sequence $(q_n)$ is a rigidity time for $(C\times\T,\nu,T)$.\end{Lemma}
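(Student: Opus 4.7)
\medskip

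\textbf{Proof proposal for Lemma~\ref{l:ri2}.} The plan is to verify the rigidity condition $U_T^{q_n}F \to F$ in $L^2(C\times\T,\nu)$ on a dense subalgebra of continuous functions and then extend by standard approximation. Observe first that the skew product $T(x,y)=(x,x+y)$ iterates as $T^{q_n}(x,y)=(x,y+q_nx)$, so its action on a function does not move the first coordinate at all — this reduces rigidity to a purely fiberwise question driven by the assumption \eqref{ri1}.

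First I would specialize to test functions of the product form $F(x,y)=f(x)\chi_k(y)$, where $f\in C(C)$ and $\chi_k(y)=e^{2\pi iky}$ is a character of $\T$. For such $F$,
\[
U_T^{q_n}F(x,y)-F(x,y)=f(x)e^{2\pi iky}\bigl(e^{2\pi ikq_nx}-1\bigr),
\]
and hence
\[
\bigl\|U_T^{q_n}F-F\bigr\|_{L^2(\nu)}^{2}
=\int_{C\times\T}|f(x)|^{2}\bigl|e^{2\pi ikq_nx}-1\bigr|^{2}\,d\nu(x,y).
\]
By hypothesis, $\|q_nx\|\to0$ for every $x\in C$, so $|e^{2\pi ikq_nx}-1|\to 0$ pointwise on $C$ for each fixed $k\in\Z$; the integrand is uniformly bounded by $4\|f\|_\infty^{2}$ and the dominated convergence theorem yields $\|U_T^{q_n}F-F\|_{L^2(\nu)}\to0$.

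Next I would argue that the linear span $\mathcal{D}$ of such product functions is dense in $L^2(C\times\T,\nu)$. By Stone--Weierstrass, $\mathcal{D}$ is uniformly dense in $C(C\times\T)$ (it is a conjugation-closed algebra containing constants and separating points: two points with distinct first coordinates are separated by some $f\in C(C)$, and two points with distinct second coordinates are separated by some character $\chi_k$). Since $C(C\times\T)$ is dense in $L^2(\nu)$ and uniform approximation implies $L^2$-approximation (as $\nu$ is a probability measure), $\mathcal{D}$ is $L^2$-dense.

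Finally, a routine $3\varepsilon$-argument using the fact that $U_T^{q_n}$ is an isometry upgrades rigidity from $\mathcal{D}$ to all of $L^2(\nu)$: for $F\in L^2(\nu)$ and $F_\varepsilon\in\mathcal{D}$ with $\|F-F_\varepsilon\|_{L^2}<\varepsilon$,
\[
\|U_T^{q_n}F-F\|_{L^2}\le 2\|F-F_\varepsilon\|_{L^2}+\|U_T^{q_n}F_\varepsilon-F_\varepsilon\|_{L^2}<3\varepsilon
\]
for all $n$ sufficiently large. There is no real obstacle here; the only minor point to handle with care is the pointwise-to-$L^2$ passage on the fiber coordinate, which is essentially immediate because the first coordinate is fixed by $T$ and $|e^{2\pi ikq_nx}-1|$ depends only on $x\in C$.
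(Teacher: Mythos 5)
Your proof is correct and is the same argument the paper gives, only with the routine details spelled out: the paper simply notes that $T^{q_n}(\alpha,y)=(\alpha,y+q_n\alpha)\to(\alpha,y)$ pointwise on each fiber and declares rigidity to follow, while you make explicit the dominated-convergence step for products $f(x)\chi_k(y)$, the Stone--Weierstrass density, and the $3\varepsilon$ upgrade.
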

\begin{proof} For each $\alpha\in C$, on $\{\alpha\}\times\T$, the homeomorphism $T$ acts as the rotation by $\alpha$ and the observation follows by~\eqref{ri1}   ($T^{q_n}(\alpha,y)=(\alpha,y+q_n\alpha)\to (\alpha,y)$ pointwise).\end{proof}

\subsection{Rigidity and a proof of Theorem~\ref{p:unif1}}
\begin{Lemma}\label{l:ri3} Let us fix $(q_n)$ with bounded prime volume.
If $(X,T)$ is a topological system such that {all} invariant measures yield rigidity, with $(q_n)$ being a rigidity time, then $(X,T)$ satisfies the strong LOMO property.\footnote{Since we are talking about rigidity along a fixed sequence, the assumption ``all'' can be replaced with ``all ergodic''.}\end{Lemma}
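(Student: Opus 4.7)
The plan is to follow the same dynamical template used in the alternative proof of Corollary~\ref{p:uniflog1}, replacing the role of ``singular spectral type'' by the rigidity hypothesis and using the main theorem of~\cite{Ka-Le-Ra} in place of Tao's logarithmic two-point Chowla theorem. I would argue by contradiction: fix witnesses $(b_k)\subset\N$ with $d(\{b_k\colon k\geq1\})=0$, a sequence $(x_k)\subset X$, and $f\in C(X)$ for which the sum in~\eqref{smomo1} does not vanish. Exactly as in the passage from~\eqref{olala} to~\eqref{olala1}, introduce $Y=\{e^{2\pi ij/3}\colon j=0,1,2\}$, extend $f$ to $\tilde f(x,\eta)=f(x)\eta$ on $X\times Y$, and choose phases $a_k\in Y$ so that the partial sums $a_k\sum_{b_k\leq n<b_{k+1}}f(T^nx_k)\lio(n)$ lie in a fixed convex cone of angle less than $\pi$. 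The failure of strong LOMO then translates to
$$
\limsup_{K\to\infty}\left|\frac1{b_K}\sum_{k<K}a_k\sum_{b_k\leq n<b_{k+1}}f(T^nx_k)\,\pi_0(S^n\lio)\right|>0,
$$
where $S$ denotes the left shift on the orbit closure $X_\lio\subset\{-1,1\}^\Z$.

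Next, I would form the empirical measures
$$
\rho_K=\frac1{b_K}\sum_{k<K}\sum_{b_k\leq n<b_{k+1}}\delta_{(T\times\mathrm{Id}\times S)^n(x_k,a_k,\lio)}\in M(X\times Y\times X_\lio),
$$
pass to a weak-$\ast$ subsequential limit $\rho$, and use the density-zero hypothesis on $(b_k)$ to conclude that $\rho$ is $T\times\mathrm{Id}\times S$-invariant. By construction $\rho$ is a joining of some $\nu\in M(X,T)$, of a measure $\nu'\in M(Y,\mathrm{Id})$, and of a Ces\`aro Furstenberg system $(X_\lio,\kappa,S)$ of $\lio$; the target limit equals $\int\tilde f\otimes\pi_0\,d\rho$, so the problem is reduced to showing that this integral vanishes.

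To do so I would analyse the spectral measure of $\tilde f$ viewed as an element of $L^2(\rho)$. By hypothesis $(X,\nu,T)$ is rigid along $(q_n)$, and the same argument as in Lemma~\ref{l:zid} (joining the rigid factor with the identity system on $Y$ and then embedding into $\rho$) shows that the spectral measure $\sigma_{\tilde f}$ is concentrated on the rigidity set $R=\{z\in\bs^1\colon z^{q_n}\to1\}$. On the Liouville side, the crucial input from~\cite{Ka-Le-Ra} is that, under the bounded prime volume assumption~\eqref{bpv}, in \emph{any} Furstenberg system of $\lio$ the spectral measure $\sigma_{\pi_0}$ assigns zero mass to $R$. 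Transferring this spectral disjointness through the joining $\rho$ as in the proof of Lemma~\ref{l:supinside3} forces $\tilde f\perp\pi_0$ in $L^2(\rho)$, giving $\int\tilde f\otimes\pi_0\,d\rho=0$ and the desired contradiction.

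The main obstacle is the fourth step above: one must extract from~\cite{Ka-Le-Ra} the precise formulation that the coordinate $\pi_0$ in every Ces\`aro Furstenberg system of $\lio$ is spectrally orthogonal to the full rigidity set $R$ associated with any $(q_n)$ of bounded prime volume (and not merely for one distinguished such system), and then confirm that the conclusion is stable under the joining construction used here. Once that ingredient is in hand, the remaining steps are a routine adaptation of the alternative proof of Corollary~\ref{p:uniflog1}; the bounded prime volume hypothesis~\eqref{bpv} enters only through~\cite{Ka-Le-Ra}, while rigidity along $(q_n)$ of all invariant measures is used exactly to guarantee $\sigma_{\tilde f}(R^c)=0$.
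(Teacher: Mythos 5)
Your overall template is modeled on the alternative proof of Corollary~\ref{p:uniflog1} (Lemma~\ref{l:supinside3}): extend by the three-point space, form empirical measures on $X\times Y\times X_{\lio}$, pass to a subsequential limit $\rho$ which is a joining of $\nu\in M(X,T)$, a measure on $Y$, and a Ces\`aro Furstenberg system of $\lio$, and then prove $\int\tilde f\otimes\pi_0\,d\rho=0$ by spectral disjointness. The step showing $\sigma_{\tilde f}$ is concentrated on the rigidity set $R=\{z\in\bs^1\colon z^{q_n}\to1\}$ is fine: it follows from the rigidity hypothesis exactly as in Lemma~\ref{l:zid}. But the other half of the disjointness---the claim that $\sigma_{\pi_0}$ assigns zero mass to $R$ in \emph{every Ces\`aro Furstenberg system} of $\lio$---is a genuine gap, and you flag it yourself as the main obstacle. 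This is not an incidental gap that a careful citation would plug: the analogue in Lemma~\ref{l:supinside3} rests on Tao's logarithmic two-point Chowla theorem, which gives Lebesgue spectral type of $\pi_0$ only for \emph{logarithmic} Furstenberg systems; the Ces\`aro two-point Chowla is only available along a density-one set of scales (Lemma~\ref{le:two-point}), and nothing constrains your subsequence $(b_K)$ to stay in that set. So in the Ces\`aro regime you cannot even assert the full Lebesgue spectrum of $\pi_0$, let alone that it vanishes on $R$, and it is not clear that~\cite{Ka-Le-Ra} supplies the required spectral formulation.

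The paper's actual proof sidesteps exactly this difficulty by taking a different route. Instead of joining with a Furstenberg system and arguing spectrally, it first reduces to a \emph{single} topological dynamical system via the orbital-model construction of~\cite{Ab-Ku-Le-Ru}: one works on $Y=(X\times\mathbb{A})^{\N}$ with the left shift $S$, assembles the special point $(\underline y,\underline a)$ from $(b_k)$ and $(x_k)$, and proves (using the density-zero hypothesis and the structure of the orbit closure) that every measure for which this point is quasi-generic is $(q_n)$-rigid. This turns the moving-orbit average into a single quasi-generic-point statement, to which~\cite[Theorem~2.1]{Ka-Le-Ra} applies directly as a black-box M\"obius/Liouville orthogonality theorem for rigid systems with bounded prime volume. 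No statement about spectral measures of Ces\`aro Furstenberg systems of $\lio$ is required. To repair your argument, you would either have to establish the spectral vanishing $\sigma_{\pi_0}(R)=0$ in all Ces\`aro Furstenberg systems (essentially reproving the hard part of~\cite{Ka-Le-Ra} in a new form) or, more cleanly, adopt the paper's orbital-model reduction so that you invoke Theorem~2.1 of~\cite{Ka-Le-Ra} in the shape it is actually stated.
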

\begin{proof} We need to prove that in the orbital models (extended by the three-point space $\mathbb{\A}=\{e(j/3)\colon \,j=0,1,2\}$, cf.\ the proof of~\cite[Corollary~9]{Ab-Ku-Le-Ru}) obtained by $(b_k)$ and $(x_k)$, the points are quasi-generic only for $(q_n)$-rigid measures and then we use~\cite[Theorem~2.1]{Ka-Le-Ra}.

So let $Y=(X\times\mathbb{A})^{\N}$ and let $S$ be the left shift. Let
$$
\underline{y}=(y_n),\;y_n=T^{n-b_k}x_k\text{ for }b_k\leq n<b_{k+1}$$
and $\underline{a}=(a_n)$ with $a_n=a_k$ for $b_k\leq n<b_{k+1}$. Clearly the set $Z\coloneqq \{\underline{v}\in Y\colon \,(v_1,a_1)=(Tv_0,a_0)\}$ is closed. Hence, because of the properties of $(\underline{y},\underline{a})$,
$$
\Big(\frac1{N_r}\sum_{n<N_r}\delta_{S^n(\underline{y},\underline{a})}\Big)(Z)\to1.$$
Basic properties of weak-$\ast$-topology then yield that if $\rho$ is the limit of these empiric measures then $\rho$ is $S$-invariant and
$$
\rho(\{((x,a),(Tx,a),(T^2x,a),\ldots)\colon \, x\in X,a\in\mathbb{A}\})=1.$$
Let us see now what is the projection $\rho_1$ of $\rho$ on the first coordinate $X\times \mathbb{A}$: namely, it is the limit of   (assuming that $b_K<N_r<b_{K+1}$)
$$
\frac1{N_r}\Big(\sum_{j<K}\sum_{b_j\leq n<b_{j+1}}\delta_{(T^nx_j,a_j)}+\sum_{b_K\leq n<N_r}\delta_{(T^nx_K,a_K)}\Big),$$
so we obtain a measure which is $T\times \textnormal{Id}$-invariant. It is hence a joining of a measure which is $T$-invariant and of a measure on $\mathbb{A}$. Since these two measures are $(q_n)$-rigid, $\rho$ is $(q_n)$-rigid. Now, by the above, $\rho$ is just the image of $\rho_1$ by the embedding
$$
(x,a)\mapsto ((x,a),S(x,a),S^2(x,a),\ldots),$$
so also $\rho$ is $(q_n)$-rigid.  For the remaining points in the closure of the orbit of $(\underline{y},\underline{a})$, we apply the same argument as in~\cite{Ab-Le-Ru} or~\cite{Ab-Ku-Le-Ru}.\footnote{If $n_j\to\infty$ and $\underline{v}=\lim_{j\to\infty} S^{n_j}(\underline{y},\underline{a})$, then
 for some $x_1,x_2\in X$ and $a_1,a_2\in\mathbb{A}$, we have
$\underline{v}=((x_1,a_1),(Tx_1,a_1),\ldots,(T^\ell x_1,a_1),(x_2,a_2),(Tx_2,a_2),\ldots)$ for some $\ell\geq0$.}
\end{proof}

\begin{proof}[Proof of Theorem~\ref{p:unif1}]
The result follows from Lemmas~\ref{l:ri2} and~\ref{l:ri3}.
\end{proof}

\appendix

\section{\texorpdfstring{Proof of Lemma~\ref{l:ciagi}}{Proof of Lemma 1.7}}\label{App:A}

We first show that~\eqref{eq:normed1} implies~\eqref{eq:normed2}. Let $(b_k)$ be an increasing sequence with $d(\{b_k\colon \,k\geq 1\})=0$. Then we have $\lim_{K\to \infty} K/b_K=0$. Let $H\geq 1$ be an integer. We have
\begin{align*}
\left\|\sum_{b_k\leq n<b_{k+1}}z_n-\frac{1}{H}\sum_{b_k\leq m<b_{k+1}}\sum_{h\leq H}z_{m+h}\right\|\ll H.
\end{align*}
Hence,
\begin{align*}
\limsup_{K\to \infty}\frac{1}{b_K}\sum_{k<K}\left\|\sum_{b_k\leq n<b_{k+1}}z_n\right\|&\leq \limsup_{K\to \infty}\frac{1}{Hb_K}\sum_{k<K}\sum_{b_k\leq m< b_{k+1}}\left\|\sum_{h\leq H}z_{m+h}\right\|\\
&=\limsup_{K\to \infty}\frac{1}{Hb_K}\sum_{m<b_{K}}\left\|\sum_{h\leq H}z_{m+h}\right\|.
\end{align*}
Letting $H\to \infty$ shows that~\eqref{eq:normed1} implies~\eqref{eq:normed2}.

We now show that~\eqref{eq:normed2} implies~\eqref{eq:normed1}. Observe that if~\eqref{eq:normed1} fails, then there is some increasing function $H\colon  \mathbb{N}\to \mathbb{N}$ with $H(m)\leq \log m+1$ (say) and some increasing sequence $(M_i)$ satisfying $M_{i+1}> M_i^2$ such that
\begin{align}\label{eq:bk3}
\limsup_{i\to\infty}\mathbb{E}_{m\leq M_i}\frac{1}{H(M_i)}\left\|\sum_{m\leq k\leq m+H(M_i)}z_{k}\right\|>0.
\end{align}

By the pigeonhole principle, for each $i\geq 1$ there exists $a(i)\in [1,2H(M_i)]\cap \mathbb{N}$ such that the left-hand side of~\eqref{eq:bk3} is
\begin{align}\label{eq:bk4}
\ll  \limsup_{i\to\infty}H(M_i)\mathbb{E}_{m\leq M_i}\raz_{m\equiv a(i)\pmod{2H(M_i)}}\frac{1}{H(M_i)}\left\|\sum_{m\leq k\leq m+H(M_i)}z_{k}\right\|.
\end{align}
By passing to a subsequence if necessary, we may assume that $\lim_{i\to \infty}\frac{a(i)}{H(M_i)}\coloneqq a_0\in [0,2]$ exists. Now we see that~\eqref{eq:bk4} is
\begin{align}\label{eq:bk5}\begin{split}
&\limsup_{i\to\infty}\frac{1}{M_i}\sum_{\ell\leq \frac{M_i}{2H(M_i)}}\left\|\sum_{(2\ell+a_0)H(M_i)\leq k\leq (2\ell+1+a_0)H(M_i)}z_{k}\right\|\\
=&\limsup_{i\to\infty}\frac{1}{M_i}\sum_{(\frac{M_i}{2H(M_i)})^{9/10}\leq \ell\leq \frac{M_i}{2H(M_i)}}\left\|\sum_{(2\ell+a_0)H(M_i)\leq k\leq (2\ell+1+a_0)H(M_i)}z_{k}\right\|.
\end{split}
\end{align}

Now, let $M_{\ell}^{*}$ denote the least element of the sequence $(M_i)$ that is $\geq \ell$. Then $M_{\ell}^{*}=M_i$ for all $\ell\in [\big(\frac{M_i}{2H(M_i))}\big)^{9/10}, M_i]$ (recalling that $M_{i}>M_{i-1}^2$). Define a strictly increasing sequence $(b_k)$ by $b_{2k}=\lfloor (2k+a_0)H(M_k^{*})\rfloor$, $b_{2k+1}=\lfloor (2k+1+a_0)H(M_k^{*})\rfloor$. Then $b_{k+1}-b_k\to \infty$ as $k\to \infty$, so $d(\{b_k\colon \,\, k\geq 1\})=0.$ Also, we have $b_{\lfloor M_i/(2H(M_i))\rfloor}\asymp M_i$. Hence,~\eqref{eq:normed2} with this sequence $(b_k)$ contradicts~\eqref{eq:bk3}.

The case of logarithmic averages is proved completely analogously.

\section{Assumptions on nilpotent Lie groups}\label{a:nowy}
The aim of this section is to prove that a wide range of nilpotent Lie groups can be realized as a factor of a subgroup of a connected, simply connected nilpotent Lie group. The precise statement is as follows.

\begin{Prop}\label{p:embeddingnilLie}
Let $G$ be a nilpotent Lie group, $\Gamma$ a discrete cocompact subgroup of $G$, and assume that $G$ is spanned by the connected component of the identity element and finitely many other group elements. Then there exists a connected and simply connected Lie group $\widehat{G}$ with the same nilpotency step as $G$, a closed Lie subgroup $\widetilde{G}$ of $\widehat{G}$ and a surjective Lie group homomorphism $\widetilde{\pi}\colon \widetilde{G}\to G$ such that $\widehat{\Gamma}=\widetilde{\pi}^{-1}(\Gamma)$ is a cocompact lattice in $\widehat{G}$. In particular, the nilmanifold $G/\Gamma$ is isomorphic to the nilmanifold $\widetilde{G}/\widehat{\Gamma}$ which embeds as a subnilmanifold into the nilmanifold $\widehat{G}/\widehat{\Gamma}$.
\end{Prop}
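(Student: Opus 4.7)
The plan is to carry out the construction in two independent stages: first replace $G$ by a nilpotent Lie group $\widetilde G$ whose identity component is the universal cover of $G^\circ$ and whose component group is a torsion-free finitely generated nilpotent group surjecting onto $G/G^\circ$; then embed $\widetilde G$ as a closed Lie subgroup of a connected, simply connected nilpotent Lie group $\widehat G$, at which point the preimage of $\Gamma$ is automatically a cocompact lattice of $\widehat G$.

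\paragraph{Stage 1: build $\widetilde G$.} Let $\pi_0:\widetilde{G^\circ}\to G^\circ$ be the universal cover of the identity component; this is a connected, simply connected nilpotent Lie group of the same step as $G$, with finitely generated discrete central kernel. The component group $H=G/G^\circ$ is finitely generated nilpotent by hypothesis (of step $\leq$ that of $G$); choose a finitely generated torsion-free nilpotent group $\widehat H$ of the same step together with a surjection $\widehat H\twoheadrightarrow H$ (for instance, the free $s$-step nilpotent group on a generating set of $H$). Pulling back the extension $1\to G^\circ\to G\to H\to 1$ along $\widehat H\to H$ yields an extension $1\to G^\circ\to G'\to\widehat H\to 1$; lifting the conjugation action of $\widehat H$ on $G^\circ$ uniquely to $\widetilde{G^\circ}$ (by simple connectedness) and lifting the defining 2-cocycle along $\pi_0$ then refines this into an extension $1\to\widetilde{G^\circ}\to\widetilde G\to\widehat H\to 1$ equipped with a surjective Lie group homomorphism $\widetilde\pi:\widetilde G\to G$ extending $\pi_0$ and having kernel $\ker\pi_0$. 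Since an extension of a torsion-free group by a torsion-free group is torsion-free, $\widetilde G$ is torsion-free, and it is nilpotent of the same step as $G$.

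\paragraph{Stage 2: embed $\widetilde G$ into a connected simply connected group.} As a finitely generated torsion-free nilpotent group, $\widehat H$ sits as a cocompact lattice in its Malcev completion $\mathcal M(\widehat H)$, a connected, simply connected nilpotent Lie group of the same step. The extension $1\to\widetilde{G^\circ}\to\widetilde G\to\widehat H\to 1$ is classified by an action $\alpha:\widehat H\to\mathrm{Aut}(\widetilde{G^\circ})$ and a 2-cocycle $c:\widehat H\times\widehat H\to\widetilde{G^\circ}$. By Malcev rigidity applied to the linear Lie group $\mathrm{Aut}(\widetilde{G^\circ})$, $\alpha$ extends uniquely to a smooth action $\widehat\alpha$ of $\mathcal M(\widehat H)$, and the cocycle $c$ similarly extends uniquely to a smooth 2-cocycle $\widehat c$ on $\mathcal M(\widehat H)$ valued in $\widetilde{G^\circ}$. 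The resulting extension
\[
1\to\widetilde{G^\circ}\to\widehat G\to\mathcal M(\widehat H)\to 1
\]
is a connected, simply connected nilpotent Lie group of the correct step, and $\widetilde G$ embeds as the closed Lie subgroup of $\widehat G$ projecting onto the cocompact lattice $\widehat H\subset\mathcal M(\widehat H)$.

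\paragraph{Lattice property and main obstacle.} Set $\widehat\Gamma:=\widetilde\pi^{-1}(\Gamma)$. Discreteness is immediate from discreteness of $\Gamma$ and of $\ker\widetilde\pi$. Cocompactness follows from the fibration $\widehat G/\widehat\Gamma\to\widehat G/\widetilde G\cong\mathcal M(\widehat H)/\widehat H$, which is a compact nilmanifold, with fiber $\widetilde G/\widehat\Gamma\cong G/\Gamma$ (compact by hypothesis); the isomorphism $G/\Gamma\cong\widetilde G/\widehat\Gamma\hookrightarrow\widehat G/\widehat\Gamma$ follows immediately. The main technical obstacle is Stage 2, namely showing that both the action $\alpha$ and the 2-cocycle $c$ classifying $\widetilde G$ over $\widehat H$ can be simultaneously and compatibly extended to smooth data over $\mathcal M(\widehat H)$. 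This uses Malcev rigidity for the homomorphism $\alpha$, uniqueness of the smooth extension of $c$ up to coboundary in the cohomology of the nilpotent Lie group $\mathcal M(\widehat H)$ with coefficients in the smooth module $\widetilde{G^\circ}$, and the fact that both pieces of data already come from the smooth extension $\widetilde G$; the remaining verifications are routine covering theory and bookkeeping of the cocompact lattice.
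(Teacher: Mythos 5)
Your overall strategy is the right one (the same as the paper's): first replace $G$ by a simply connected cover $\widetilde G$ whose component group is a finitely generated torsion-free nilpotent group, then use Raghunathan/Malcev theory to embed $\widetilde G$ into a connected, simply connected $\widehat G$ so that the preimage of $\Gamma$ stays a cocompact lattice. However, your Stage~1 has a genuine gap. You model $\widetilde G$ as an extension of $\widehat H$ by $\widetilde{G^\circ}$ obtained by ``lifting the defining 2-cocycle along $\pi_0$''. This is not automatic: for a non-abelian kernel the extension is governed by Schreier/Eilenberg--MacLane data, and lifting the relevant $Z(G^\circ)$-valued cocycle through the central extension $1\to\ker\pi_0\to\widetilde{G^\circ}\to G^\circ\to 1$ has an obstruction living in $H^3(\widehat H;\ker\pi_0)$; you do not explain why it vanishes, nor why the resulting set-theoretic extension is a Lie group. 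Relatedly, your assertion that $\ker\widetilde\pi=\ker\pi_0$ is simply false: after pulling back along $\widehat H\twoheadrightarrow H$ the kernel must also contain $\ker(\widehat H\to H)$, which is nontrivial in general. The paper avoids all of this by not trying to preserve the extension class at all: it forms the \emph{split} extension $\widetilde G=\widetilde{G^\circ}\rtimes\widetilde H$, where $\widetilde H$ is the free $s$-step nilpotent group on $q$ generators covering the group generated by the lifted conjugation automorphisms $\widetilde\phi_1,\dots,\widetilde\phi_q$, and defines $\widetilde\pi(g,\varphi)=\widetilde\pi_0(g)\prod\tau_{j_i}$; the discrete kernel of $\widetilde\pi$ then absorbs both $\pi_1(G^\circ)$ and the failure of the $\tau_j$'s to generate a group meeting $G^\circ$ trivially. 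This is cleaner and sidesteps the cocycle issue entirely.

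Your Stage~2 is also underjustified. To extend $\alpha:\widehat H\to\mathrm{Aut}(\widetilde{G^\circ})$ by Malcev rigidity you must first argue that $\alpha$ lands in a \emph{unipotent} (hence simply connected nilpotent) subgroup of $\mathrm{Aut}(\widetilde{G^\circ})$ --- this does hold because the adjoint action in a nilpotent Lie group is unipotent, but it needs to be said. Extending a non-abelian $2$-cocycle to $\mathcal M(\widehat H)$ is a further, separate argument that you do not actually carry out. The paper avoids reproving this embedding from scratch by citing Raghunathan's Theorem 2.20 directly, which applies exactly because the component group of $\widetilde G$ is finitely generated and torsion-free (and the latter requires the separate fact, Lemma~\ref{lem:free_nilp_tor_free} in the paper, that free nilpotent groups are torsion-free --- something you also glossed over). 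So: same architecture, but your execution of Stage 1 relies on an unjustified and partly incorrect cocycle-lifting step, and your Stage~2 silently reproves a known theorem while omitting the two key verifications that make it work.
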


In what follows let $G^\circ$ denote the connected component of the identity element of a nilpotent Lie group $G$.
If $G^\circ$ is simply connected and $G/G^\circ$ is a finitely generated and torsion-free group then the conclusion of Proposition~\ref{p:embeddingnilLie} follows directly from \cite[Theorem 2.20]{Ra}. 
In the case when $G/G^\circ$ is a finitely generated abelian group, Proposition~\ref{p:embeddingnilLie} was proved in \citep[Lemma 7, p.~156]{Ho-Kr}. 
The main ingredient in our proof of Proposition~\ref{p:embeddingnilLie} is a generalization of \citep[Lemma 7, p.~156]{Ho-Kr} from the abelian case to the nilpotent case given in the next lemma. The notion of a free nilpotent group is defined in Section~\ref{sec_B1}.

\begin{Lemma}\label{l:HostKra} Let $G$ be an $s$-step nilpotent Lie group and assume that $G$ is spanned by $G^\circ$ and $q$ elements $\tau_1,\ldots,\tau_q$. Then there exist a simply connected $s$-step nilpotent Lie group $\widetilde{G}$ and a surjective Lie group homomorphism $\widetilde\pi:\widetilde{G}\to G$ whose kernel ${\rm ker}(\widetilde\pi)$ is discrete. Moreover, there exist $\widetilde\tau_1,\ldots,\widetilde\tau_q\in \widetilde{G}$ such that $\widetilde{\pi}(\widetilde{\tau}_i)=\tau_i$ for $i=1,\ldots,q$, $\widetilde{G}$ is spanned by $\widetilde{G}^\circ$ and $\widetilde\tau_1,\ldots,\widetilde\tau_q$, and the group $\langle \widetilde\tau_1,\ldots,\widetilde\tau_q \rangle$ is a free $s$-step nilpotent group and isomorphic to $\widetilde{G}/\widetilde{G}^\circ$.
In particular, $\widetilde{G}/\widetilde{G}^\circ$ is a finitely generated and torsion-free group.
\end{Lemma}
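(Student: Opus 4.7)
The plan is to construct $\widetilde{G}$ in two complementary moves: first replace $G/G^\circ$ by the free $s$-step nilpotent group on $q$ generators, then replace $G^\circ$ by its simply connected cover, and finally amalgamate these two constructions into a single Lie group extension.

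Concretely, since $G/G^\circ$ is a finitely generated $s$-step nilpotent group generated by the images $\overline{\tau}_1,\ldots,\overline{\tau}_q$, the universal property of the free $s$-step nilpotent group $F$ on $q$ generators $\widetilde{\tau}_1,\ldots,\widetilde{\tau}_q$ supplies a unique surjective homomorphism $\phi\colon F\to G/G^\circ$ with $\phi(\widetilde{\tau}_i)=\overline{\tau}_i$. I would then form the pullback $\widehat{G}:=F\times_{G/G^\circ}G$, which fits into a short exact sequence $1\to G^\circ\to\widehat{G}\to F\to 1$ and carries a natural Lie group structure with identity component isomorphic to $G^\circ$; since $\widehat{G}$ embeds injectively into the $s$-step nilpotent group $F\times G$, it is itself $s$-step nilpotent.

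Next, let $p\colon\widetilde{G}^\circ\to G^\circ$ be the universal cover, a connected and simply connected $s$-step nilpotent Lie group whose kernel $\ker p$ is a discrete central subgroup. By simple connectedness, every Lie automorphism of $G^\circ$ lifts uniquely to one of $\widetilde{G}^\circ$, so in particular the conjugation action of $\widehat{G}$ on $G^\circ$ lifts to an action on $\widetilde{G}^\circ$. I would assemble these data into a Lie group extension
\[
1\to\widetilde{G}^\circ\to\widetilde{G}\to F\to 1
\]
whose $2$-cocycle is obtained by lifting, through $p$, the $2$-cocycle describing $\widehat{G}$ as an extension of $F$ by $G^\circ$. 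The composite $\widetilde{\pi}\colon\widetilde{G}\to\widehat{G}\to G$, using $p$ on the identity component and the identity on $F$-cosets, is then a surjective Lie group homomorphism whose kernel equals $\ker p\subset\widetilde{G}^\circ$ and hence is discrete. Choosing distinguished lifts $\widetilde{\tau}_i\in\widetilde{G}$ of the generators of $F$, we obtain that $\widetilde{G}$ is spanned by $\widetilde{G}^\circ$ together with $\widetilde{\tau}_1,\ldots,\widetilde{\tau}_q$, that $\widetilde{G}/\widetilde{G}^\circ\cong F$, and that $\langle\widetilde{\tau}_1,\ldots,\widetilde{\tau}_q\rangle$ is the free $s$-step nilpotent group on the $\widetilde{\tau}_i$. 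Nilpotency of step $s$ for $\widetilde{G}$ follows as in the previous paragraph.

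The main obstacle is the construction of the Lie group extension displayed above, more precisely the need to lift the $\widehat{G}$-cocycle through $p$: a priori this has a cohomological obstruction with coefficients in the discrete group $\ker p$. I expect this to be resolved by imitating the explicit construction in \cite[Lemma~7, p.~156]{Ho-Kr}, which handles the case when $G/G^\circ$ is abelian: one picks a set-theoretic section $F\to\widehat{G}$ and defines the multiplication on $\widetilde{G}=F\times\widetilde{G}^\circ$ by transport of structure, using simple connectedness of $\widetilde{G}^\circ$ to make the lifted product globally well-defined. The essential new ingredient beyond the abelian case is the freeness of $F$, which ensures the resulting group carries no identifications beyond those already implicit in $G$ and hence remains $s$-step nilpotent; the remaining verifications are bookkeeping on iterated commutators of length up to $s$.
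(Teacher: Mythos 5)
Your proposal shares the key ingredients of the paper's proof --- the universal cover $p\colon\widetilde{G}^\circ\to G^\circ$, the free $s$-step nilpotent group $F$ on $q$ generators, and the observation that the conjugation action on $G^\circ$ lifts uniquely to $\widetilde{G}^\circ$ --- but the way you assemble them leaves one gap you flag without closing and a second one you do not notice.

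\emph{The cocycle-lifting step.} You propose to obtain $\widetilde{G}$ by lifting the $2$-cocycle of $1\to G^\circ\to\widehat{G}\to F\to 1$ through $p$, and you correctly observe that a priori there is a cohomological obstruction valued in $\ker p$. That obstruction is genuine --- for a free \emph{nilpotent} group $F$ (as opposed to a free group) there is no general vanishing of $H^3(F,\cdot)$, so ``freeness of $F$'' is not by itself a reason it disappears --- and the reference to imitating~\cite[Lemma~7]{Ho-Kr} does not supply the missing argument. The clean way out, which is precisely what the paper does, is to notice that the extension \emph{splits}: because $G$ is $s$-step nilpotent, the assignment of the $q$ free generators of $F$ to $\tau_1,\ldots,\tau_q$ extends, by the universal property of $F$, to a group homomorphism $F\to G$ lifting $\phi\colon F\to G/G^\circ$. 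With a splitting in hand there is nothing to lift cocycle-wise; one just forms the semidirect product $\widetilde{G}=\widetilde{G}^\circ\rtimes F$ using the lifted conjugation action that you already constructed. This is exactly the paper's construction: it lifts the automorphisms $\phi_j$ of $G^\circ$ (conjugation by $\tau_j$) to automorphisms $\widetilde{\phi}_j$ of $\widetilde{G}^\circ$, takes the free $s$-step nilpotent group $\widetilde{H}$ on generators $\varphi_j\mapsto\widetilde{\phi}_j$, and sets $\widetilde{G}=\widetilde{G}^\circ\rtimes\widetilde{H}$; the homomorphism $\widetilde{\pi}$ is then defined via the very splitting $\varphi_j\mapsto\tau_j$. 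In your notation $\widetilde{H}\cong F$ and the actions agree, so this is the same group --- but you should state the splitting explicitly rather than attempt a cocycle lift.

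\emph{Nilpotency of $\widetilde{G}$.} You write that ``nilpotency of step $s$ for $\widetilde{G}$ follows as in the previous paragraph,'' i.e.\ by embedding into the $s$-step nilpotent group $F\times G$. This works for $\widehat{G}$ (which is a fiber product and genuinely injects into $F\times G$), but not for $\widetilde{G}$: the map $\widetilde{G}\to F\times G$ built from $\widetilde{\pi}$ and the quotient to $F$ has kernel $\ker\widetilde{\pi}$, so it is not injective. A semidirect product of nilpotent groups need not be nilpotent, so $s$-step nilpotency of $\widetilde{G}=\widetilde{G}^\circ\rtimes F$ requires a real argument. The paper's Claim~II supplies one: $\widetilde{\pi}(\widetilde{G}_{s+1})\subset G_{s+1}=\{e\}$ forces $\widetilde{G}_{s+1}\subset\ker\widetilde{\pi}$, hence discrete; a separate argument (invoking Lemmas~2 and~5 of~\cite{Ho-Kr} and the $s$-step nilpotency of $\widetilde{H}$ itself) shows $\widetilde{G}_{s+1}$ is pathwise connected; being discrete and connected it is trivial. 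You need some version of this argument, and it is not a matter of bookkeeping on short commutators.
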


\begin{proof}[Proof of Proposition~\ref{p:embeddingnilLie} assuming Lemma~\ref{l:HostKra}]
Let $G$  and $\Gamma$ be as in the statement of Proposition~\ref{p:embeddingnilLie}.
In view of Lemma~\ref{l:HostKra}, there exists a simply connected Lie group $\widetilde{G}$ of the same nilpotency step as $G$ such that $\widetilde{G}/\widetilde{G}^\circ$ is a finitely generated torsion-free group, and a surjective Lie group homomorphism $\widetilde\pi:\widetilde{G}\to G$ whose kernel ${\rm ker}(\widetilde\pi)$ is discrete.
Define $\widetilde{\Gamma}=\widetilde\pi^{-1}(\Gamma)$ and note that $\widetilde{\Gamma}$ is a discrete and cocompact subgroup of $\widetilde{G}$ and the nilmanifolds $\widetilde{G}/\widetilde{\Gamma}$ and $G/\Gamma$ are isomorphic.
We can now apply \cite[Theorem 2.20]{Ra} and embed $\widetilde{G}$ into a connected, simply connected nilpotent Lie group $\hat{G}$ of the same nilpotency step and such that the induced embedding $\widehat{\Gamma}$ of $\widetilde{\Gamma}$ into $\hat{G}$ remains a discrete and cocompact subgroup of $\hat{G}$. 
\end{proof}

\subsection{Free nilpotent cover}
\label{sec_B1}
Given a group $H$ let $H_n$ denote the $n$th term of the {\em lower central series} of $H$, that is $H_1=H$ and $H_{n+1}=[H_n,H]$,\footnote{Given two subsets $L,M$ of $H$ we denote by $[L,M]$ the subgroup of $H$ generated by all commutators $[l,m]=lml^{-1}m^{-1}$ with $l\in L, m\in M$. $[L,M]$ is  a normal subgroup of $H$ whenever $L,M$ are normal.} $n\in\N$. By definition, The group $H$ is {\em nilpotent (of step $\le n$)} if $H_{n+1}=\{e\}$ for some $n$. It is easy to check that $H_n$ is the subgroup of $H$ generated by all commutators of the form
$$
[\ldots[[g_1,g_2],g_3],\ldots,g_n],
$$
where $g_1,\ldots,g_n\in H$. Note that, for every group $H$, the factor $H/H_{n+1}$ is a nilpotent group (of step $\le n$).

For every $n\in\N$ there exists a surjective homomorphism $H/H_{n+1}\rightarrow H/H_n$ with the kernel isomorphic to $H_n/H_{n+1}$. In other words, there is a short exact sequence
\begin{equation}\label{eq:exseq}
\{e\}\rightarrow H_n/H_{n+1}\rightarrow H/H_{n+1}\rightarrow H/H_n\rightarrow\{e\}.
\end{equation}

If $F$ is a free group in $q$ generators then $F/F_{n+1}$ is called a \emph{free $n$-step nilpotent group in $q$ generators}.

\begin{Lemma}\label{c:SK1}
For every finitely generated $n$-step nilpotent group $H$ there exists a free finitely generated $n$-step nilpotent group $\widetilde{H}$ and a surjective group homomorphism $\widetilde{H}\rightarrow H$.
\end{Lemma}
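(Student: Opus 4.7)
The plan is to use the universal property of the free group together with the fact that any group homomorphism respects the lower central series.

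First I would pick any finite generating set $h_1,\ldots,h_q$ of $H$, and let $F$ denote the free group on $q$ generators $x_1,\ldots,x_q$. By the universal property of free groups, there is a unique group homomorphism $\varphi\colon F\to H$ sending $x_i\mapsto h_i$ for each $i$, and $\varphi$ is surjective since its image contains a generating set of $H$.

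Next, I would observe that any group homomorphism $\varphi\colon F\to H$ satisfies $\varphi(F_k)\subseteq H_k$ for every $k\geq 1$; this is proved by induction on $k$, using that $\varphi$ sends commutators to commutators so that $\varphi([F_{k-1},F])\subseteq [H_{k-1},H]$. Applying this with $k=n+1$ and using the assumption that $H$ is $n$-step nilpotent, we get $\varphi(F_{n+1})\subseteq H_{n+1}=\{e\}$, so $F_{n+1}\subseteq \ker(\varphi)$. Consequently $\varphi$ descends to a well-defined surjective homomorphism
\[
\widetilde\varphi\colon F/F_{n+1}\longrightarrow H.
\]
Setting $\widetilde H=F/F_{n+1}$, which by the definition given in Section~\ref{sec_B1} is precisely the free $n$-step nilpotent group on $q$ generators, completes the proof. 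There is no substantive obstacle here: the statement is an immediate consequence of the universal property of free groups combined with the elementary fact that homomorphisms respect the lower central series.
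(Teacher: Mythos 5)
Your proof is correct and takes essentially the same route as the paper: map a free group $F$ onto $H$ via the universal property, note that the kernel contains $F_{n+1}$ because $H$ is $n$-step nilpotent, and descend to $F/F_{n+1}$. You simply spell out the step that homomorphisms respect the lower central series, which the paper leaves implicit.
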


\begin{proof}
Assume that  $g_1\ldots g_r$ generate $H$ and $H$ is nilpotent of step $n$. Let $F$ be the free group with $r$ free generators $f_1,\ldots, f_r$. Then the mapping $f_i\mapsto g_i$, $i=1,\ldots, r$ induces a surjective group homomorphism from $F/F_{n+1}$ to $H$. The group $\widetilde{H}=F/F_{n+1}$ is a free finitely generated nilpotent group of step $n$, finishing the proof.
\end{proof}

\begin{Remark}
The groups $F/F_{n+1}$ are free objects in the variety of the nilpotent groups of degree $\le n$, see \cite[Chap. VI]{KarMier} and \cite{Tao}. 
Recall also the well-known fact that $F/F_2$ - the abelianization of a free group $F$ - is free abelian.
\end{Remark}

\begin{Lemma}\label{lem:free_nilp_tor_free}
Let $F$ be a free group in $q$ generators. Then $F/F_n$ is torsion-free for every $n\in\N$.
\end{Lemma}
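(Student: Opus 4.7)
The plan is to argue by induction on $n$, exploiting the fact that the successive quotients in the lower central series are central. The base case $n=1$ is trivial, since $F/F_1$ is the trivial group. (If one prefers, take the base case to be $n=2$, where $F/F_2$ is the abelianization of $F$, which is free abelian of rank $q$ and hence torsion-free.)

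For the inductive step, I would suppose $F/F_n$ is torsion-free and aim to transfer this to $F/F_{n+1}$ via the exact sequence \eqref{eq:exseq}. The crucial feature of that sequence is that it is a \emph{central} extension: since $F_{n+1}=[F_n,F]$, every element of $F_n/F_{n+1}$ commutes with every element of $F/F_{n+1}$, so $F_n/F_{n+1}$ sits in the center of $F/F_{n+1}$. Now let $g\in F/F_{n+1}$ satisfy $g^k=e$ for some $k\in\N$, and write $\bar g$ for its image in $F/F_n$. Then $\bar g^k=e$, and by the inductive hypothesis $\bar g=e$, so $g$ lies in the kernel $F_n/F_{n+1}$. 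Consequently, the inductive step reduces to proving that $F_n/F_{n+1}$ is torsion-free.

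For this remaining step, I would cite the classical theorem, going back to Magnus, Witt, and Hall, that the abelian group $F_n/F_{n+1}$ is in fact \emph{free} abelian of finite rank given by Witt's formula. The standard route is through the Magnus embedding of $F$ into the group of units of the ring of formal power series $\Z\langle\langle X_1,\ldots,X_q\rangle\rangle$ in non-commuting indeterminates via $f_i\mapsto 1+X_i$; under this embedding the successive quotients $F_n/F_{n+1}$ are identified with the homogeneous degree-$n$ components of the free Lie ring over $\Z$ on $q$ generators, which are free abelian.

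The anticipated main obstacle is this last input: it is the only non-elementary ingredient, and reproving it here would take us far afield. However, it is entirely classical and can be quoted from standard references such as Magnus--Karrass--Solitar's \emph{Combinatorial Group Theory} or P.~Hall's \emph{The Theory of Groups}, so no new argument is required and the induction closes without further difficulty.
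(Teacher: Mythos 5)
Your proof is correct and follows essentially the same route as the paper: induction via the short exact sequences $\{e\}\to F_n/F_{n+1}\to F/F_{n+1}\to F/F_n\to\{e\}$, combined with the classical Magnus--Karrass--Solitar fact that $F_n/F_{n+1}$ is free abelian (hence torsion-free), and the elementary observation that a short exact sequence with torsion-free ends has a torsion-free middle term --- which is exactly the footnoted lemma in the paper. One small remark: you present centrality of the extension as ``the crucial feature,'' but your argument never actually invokes it, since the step ``$g^k=e \Rightarrow \bar g^k=e \Rightarrow \bar g=e \Rightarrow g\in F_n/F_{n+1} \Rightarrow g=e$'' is valid for any short exact sequence of groups, central or not.
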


\begin{proof}
 It follows from \cite[Theorem 5.12]{MagKarSol} that  $F_{n}/F_{n+1}$ is a free abelian fintely generated group for every $n\in\N$, therefore also torsion-free (see also \cite{Oml}).
If both the end terms of a short exact sequence of groups are torsion-free, then the middle term is also torsion-free\footnote{Assume that $B$ is a normal subgroup of $A$ and both $B$ and $A/B$ are torsion-free. Let $a\in A$ be an element of finite rank, say, $a^r=e$. Then the coset of $a$ is of finite rank in $A/B$, so, since $A/B$ is torsion-free, $a\in B$.  But $B$ is torsion-free, so $a=e$. }. We apply this observation to the sequences (\ref{eq:exseq}) with $H=F$:
\begin{equation}
\begin{array}{c}
\{e\}\rightarrow F/F_{2}\rightarrow F/F_{2}\rightarrow F/F_1=\{e\}\rightarrow\{e\},\\
\{e\}\rightarrow F_2/F_{3}\rightarrow F/F_{3}\rightarrow F/F_2\rightarrow\{e\},\\
\{e\}\rightarrow F_3/F_{4}\rightarrow F/F_{4}\rightarrow F/F_3\rightarrow\{e\},\\
\ldots
\end{array}
\end{equation}
and we derive the lemma  by induction on $n$.
\end{proof}

\subsection{Proof of Lemma~\ref{l:HostKra}}

\begin{proof}[Proof of Lemma~\ref{l:HostKra}]
Recall that that $G$ is an $s$-step nilpotent Lie group generated by its connected component\footnote{Note that a connected Lie group is automatically path connected.} $G^\circ$ and $\langle \tau_1,\dots,\tau_q\rangle$  (both subgroups being obviously nilpotent of step $\leq s$). 

Denote by $\widetilde{G}^\circ$ the universal cover of $G^\circ$ with the homomorphism $\widetilde{\pi}_0:\widetilde{G}^\circ\to G^\circ$. Let $\phi_j\in{\rm Aut}(G^\circ)$ be given by $\phi_j(g)=\tau_jg\tau_j^{-1}$, $j=1,\ldots, q$. By the universal property of the universal cover, each such $\phi_j$ lifts uniquely to an automorphism $\widetilde\phi_j$ of $\widetilde{G}^\circ$.
Let $H$ be the group generated by $\widetilde\phi_j$, $j=1,\ldots, q$ and, using Lemma~\ref{c:SK1}, let $\widetilde{H}$ denote the free nilpotent cover of $H$. Let $\rho\colon \widetilde{H}\to H$ be the induced factor map and let $\varphi_1,\ldots,\varphi_q$ denote the generators of $\widetilde{H}$ satisfying $\rho(\varphi_j)=\widetilde\phi_j$.

Note that $\rho(\varphi)$ is an automorphism of $\widetilde{G}^\circ$ for every $\varphi\in\widetilde{H}$. So we can define the semi-direct product $\widetilde{G}:=\widetilde{G}^\circ\rtimes \widetilde{H}$, where
$$
(g,\varphi)\cdot(g',\varphi')=(g\cdot \rho(\varphi)(g'),\varphi\cdot \varphi'),\qquad\forall (g,\varphi),(g',\varphi')\in \widetilde{G}^\circ\times \widetilde{H}.
$$
Observe that:\\
(a) $\widetilde{G}^\circ\times\{e_{\widetilde{H}}\}$ is a normal subgroup of $\widetilde G$;\\
(b) as a topological space  $\widetilde{G}=\widetilde{G}^\circ\times \widetilde{H}$ (in particular, $\widetilde{G}^\circ\times\{e_{\widetilde{H}}\}$ is an open subgroup);\\
(c) If $\widetilde\tau_j=(e_{\widetilde{G}^\circ},\varphi_j)$ then $\langle\widetilde\tau_1,\ldots,\widetilde\tau_q\rangle $ is isomorphic to $\widetilde{H}$.

It follows from (a) and (b) that $\widetilde{G}^\circ\times\{e_{\widetilde{H}}\}$ is the connected component of $e_{\widetilde{G}}$ and it follows from (c) that $\widetilde{G}$ is spanned by $\widetilde{G}^\circ\times\{e_{\widetilde{H}}\}$ and $\widetilde\tau_1,\ldots, \widetilde\tau_q$.
Therefore, $\widetilde{G}/\widetilde{G}^\circ$ is isomorphic to $\widetilde{H}=\langle \widetilde\tau_1,\ldots,\widetilde\tau_q \rangle$, which is a free $s$-step nilpotent group. 
In particular, this group is torsion-free due to Lemma~\ref{lem:free_nilp_tor_free}.

Every element of $\varphi$ of $\widetilde{H}$ can be written as $\varphi=\prod_{k=1}^K\varphi_{j_i}$ with $j_i\in\{1,\ldots,q\}$, $i=1,\ldots,K$. Then $\widetilde\pi:\widetilde{G}\to G$,
\begin{equation}
\label{eqn_phitilde_def}
\widetilde\pi(g,\varphi)=\widetilde\pi_0(g)\prod_{i=1}^K\tau_{j_i}
\end{equation}
is a well defined homomorphism satisfying $\widetilde{\pi}(\widetilde{\tau}_i)=\tau_i$ for $i=1,\ldots,q$. 
We have the following:\\
 {\bf Claim I.}  ${\rm ker}(\widetilde\pi)$ is discrete.

 Indeed, let $(g,\varphi)\in {\rm ker}(\widetilde\pi)$, $\varphi=\prod_{k=1}^K\varphi_{j_i}$. Then, by \eqref{eqn_phitilde_def}, $\widetilde\pi_0(g)$ belongs to a countable subgroup generated by $\tau_{j_i}$. But $\widetilde\pi_0$ is countable to~1, hence, $g$ belongs to a countable subset of $\widetilde{G}^\circ$. Since ${\rm ker}(\widetilde\pi_0)$ is also closed, it must be discrete.\\
 {\bf Claim II.} $\widetilde{G}$ is $s$-step nilpotent.

 Indeed, since $\widetilde\pi(\widetilde{G}_{s+1})\subset G_{s+1}=\{e_G\}$ as $\widetilde{\pi}$ is a homomorphism, $\widetilde{G}_{s+1}\subset {\rm ker}(\widetilde\pi)$ must also be discrete. On the other hand this commutator is connected (see below), so $\widetilde{G}_{s+1}$ is trivial and therefore
 $\widetilde{G}$
  is an
  $s$-step nilpotent group.

To complete the proof of the proposition we need to show that $\widetilde{G}_{s+1}$ is connected. In our situation, $\widetilde{G}=\widetilde{G}^\circ\rtimes\widetilde{H}$, where $\widetilde{G}^\circ$ is (normal) path connected, and $\widetilde{H}$ is at most $s$-step nilpotent.

We take $t\geq s+1$. Then for each fixed $(q_1,\ldots q_t)\in \{e_{\widetilde{G}^\circ}\}\times\widetilde{H}^{t}$ we consider the map $\beta_{q_1,\ldots,q_t}:(\widetilde G^\circ\times \{e_{\widetilde{H}}\})^t\to \widetilde G$ given
$$
\beta_{q_1,\ldots,q_t}(a_1,\ldots, a_t)=[a_1q_1[a_2q_2[...[a_{t-1}q_{t-1},a_tq_t]]]]].$$
Then  $\beta_{q_1,\ldots,q_t}$ is continuous for each choice of $(q_1,\ldots, q_t)$. We use now Lemma~2 (p. 12) of \cite{Ho-Kr} to obtain that $\widetilde G_{s+1}$ is spanned by the union over all $t$-tuples $(q_1,\ldots,q_t)$, $t\geq s+1$, of the sets $\beta_{q_1,\ldots, q_t}((\widetilde G^\circ\times \{e_{\widetilde{H}}\})^t)$.
It follows that $\widetilde{G}_{s+1}$ is the group generated by a union of sets each of which is pathwise connected. However, each of these sets contains $e_{\widetilde{G}}$, by taking $a_i=e_{\widetilde{G}}$ and using the fact that $\widetilde{H}$ is $s$-step nilpotent  (so this commutator equals $\{e_{\widetilde{G}}\}$). By the first observation in the proof of Lemma~5 (p.\ 155) \cite{Ho-Kr} we conclude that $\widetilde{G}_{s+1}$ is pathwise connected.
\end{proof}

\section{Construction of full Hausdorff dimension Cantor sets with a certain Diophantine approximation property}\label{App:B}

In this appendix, we prove the following complement to Theorem~\ref{p:unif1}.

\begin{Prop}\label{prop_hausdorff}
There exists a closed set $C\subset [0,1]$ of full Hausdorff dimension such that, for some sequence $(q_n)$ of natural numbers, we have
$$\lim_{n\to \infty}\|q_n\alpha\|=0\text{ for  each }\alpha\in C$$
and
$$\sup_n\sum_{\substack{p\in \PP\\ p\mid q_n}}\frac1p<+\infty.$$
\end{Prop}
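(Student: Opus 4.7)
The plan is to realize $C$ as a closed Cantor-type set in $[0,1]$ defined by forcing certain binary digits to zero, chosen sparsely enough that $C$ retains full Hausdorff dimension but densely enough that $\|q_n\alpha\|\to 0$ along the sequence $q_n=2^{k_n}$.  Concretely, I would take $k_n=n^4$ and $\ell_n=n$, let $F=\bigcup_{n\ge 1}\{k_n+1,\ldots,k_n+\ell_n\}$, and define
\[
C=\bigcap_{n\ge 1}\bigl\{\alpha\in[0,1]:\|2^{k_n}\alpha\|\le 2^{-\ell_n}\bigr\}.
\]
Since $\alpha\mapsto\|q\alpha\|$ is continuous, each set in the intersection is closed, and hence so is $C$.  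By construction $\|q_n\alpha\|\le 2^{-\ell_n}=2^{-n}\to 0$ for every $\alpha\in C$, yielding~\eqref{sztywnosc}, while $\sum_{p\mid q_n}1/p=\tfrac{1}{2}$ for every $n$ gives~\eqref{bpv}.

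The remaining assertion is that $\dim_H C=1$, which I would prove by exhibiting a sufficiently large subset.  Let $C_0$ be the set of $\alpha\in[0,1]$ whose binary expansion satisfies $d_j(\alpha)=0$ for every $j\in F$ (the convention at the countably many dyadic rationals is immaterial).  For $\alpha\in C_0$, the fractional part of $2^{k_n}\alpha$ begins with $\ell_n$ zeros, hence is at most $2^{-\ell_n}$, so $C_0\subseteq C$ and it suffices to show $\dim_H C_0=1$.  I would do this via the mass distribution principle applied to the probability measure $\mu$ on $C_0$ obtained by placing independent fair Bernoulli coins at the positions in $\N\setminus F$ and deterministic zeros at the positions in $F$.

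Fix $s\in(0,1)$.  For $N\in[k_n+\ell_n,k_{n+1}]$ one has
\[
|F\cap[1,N]|\le\sum_{j\le n}\ell_j=\binom{n+1}{2}\le n^2\le N^{1/2},
\]
so $|F\cap[1,N]|/N\to 0$, and in particular $|F\cap[1,N]|\le(1-s)N$ for all sufficiently large $N$.  Given $\alpha\in C_0$ and $r>0$, pick $N$ with $2^{-(N+1)}\le r<2^{-N}$; then $B(\alpha,r)$ meets at most three dyadic intervals of length $2^{-N}$, and each such interval has $\mu$-mass either $0$ or $2^{-(N-|F\cap[1,N]|)}$.  Hence $\mu(B(\alpha,r))\le 3\cdot 2^{-N+|F\cap[1,N]|}\le 6\,r^s$ once $N$ is large enough, and the mass distribution principle gives $\dim_H C_0\ge s$.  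Letting $s\uparrow 1$ yields $\dim_H C_0=1$.

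The only ingredient requiring thought is the mass distribution bound, which is routine given the vanishing density of $F$; the main design choice is simply to take $k_n$ growing fast enough (polynomial already suffices) that $|F\cap[1,N]|/N\to 0$ while still keeping $\ell_n\to\infty$.  I do not anticipate any serious obstacle.
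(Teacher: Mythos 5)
Your proposal is correct, and the overall strategy is the same as the paper's: define $C$ via dyadic rigidity conditions $\|2^{k_n}\alpha\|\le\delta_n$ with $q_n=2^{k_n}$ (so bounded prime volume is trivial), and then show the resulting Cantor-type set has full Hausdorff dimension by choosing $k_n$ to grow fast enough. The difference is the tool used for the dimension lower bound. The paper invokes a packaged lemma of Falconer/Fr\k{a}czek--Lema\'nczyk type (Lemma~\ref{le_dimension}), which gives $\dim_H C\ge\liminf_n\frac{\log(m_1\cdots m_{n-1})}{-\log(m_n\vep_n)}$ for nested interval constructions; you instead pass to the digit-forcing subset $C_0\subseteq C$ (binary digits in $F=\bigcup_n\{k_n+1,\dots,k_n+\ell_n\}$ set to zero), equip it with the natural Bernoulli measure, and apply the mass distribution principle directly. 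Your route is somewhat more self-contained and concrete --- the estimate $\mu(B(\alpha,r))\ll r^s$ follows from the explicit density bound $|F\cap[1,N]|\le n^2\le N^{1/2}$ for $k_n=n^4$, $\ell_n=n$ --- whereas the paper's lemma is more flexible since it does not require the basic intervals to be exactly dyadic and quantifies the dimension in terms of the branching and gap parameters $m_n,\vep_n$. Both are standard, and your argument, including the covering-by-three-dyadic-intervals step and the conclusion $\dim_H C\ge\dim_H C_0=1$, is sound.
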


The proof is based on the following lemma. We follow~\cite{Fr-Le} based on~\cite{Fa} (see~\cite[Example~4.6]{Fa} and its proof), see also~\cite[Lemma~9]{Bu-Li-Ra}.

\begin{Lemma}\label{le_dimension}
 Let $(C_n)_{n\geq 1}\subset\mathbb{R}$ be a decreasing sequence of closed sets each of which is a finite union of pairwise disjoint closed intervals, called $n$-th level basic intervals. We assume that
each $n-1$-st level basic interval of $C_{n-1}$ includes at least $m_n\geq 2$ $n$-th level basic intervals of $C_n$. Also assume that the maximal length of $n$–th level basic intervals tends to zero when $n\to\infty$. Furthermore, assume that the gap between two consecutive $n$-th level basic intervals is at least $\vep_n$ (with $\vep_n>\vep_{n+1}>0$).

Then, the Hausdorff dimension ${\rm dim}_{\textnormal{H}}(C)$ of the intersection $C\coloneqq \bigcap_{n\geq1}C_n$ is at least $\liminf_{n\to\infty}\frac{\log(m_1\cdots m_{n-1})}{-\log(m_n\vep_n)}$.
\end{Lemma}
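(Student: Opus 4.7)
The plan is to follow the classical mass distribution approach for lower bounds on Hausdorff dimension. First, by passing to a subfamily I may assume without loss of generality that each $(n-1)$-st level basic interval contains \emph{exactly} $m_n$ many $n$-th level basic intervals, since shrinking $C$ can only decrease its Hausdorff dimension, so a lower bound for the smaller set suffices. I would then construct a Borel probability measure $\mu$ supported on $C$ by distributing mass uniformly: each of the $N_n = m_1 m_2 \cdots m_n$ many $n$-th level basic intervals is given mass $1/N_n$. These consistent level-$n$ measures have a weak-$\ast$ limit $\mu$ supported on $C$.

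By the mass distribution principle, $\dim_{\mathrm H}(C) \geq s$ whenever there is a constant $C_0$ such that $\mu(B(x,r)) \leq C_0 r^s$ for every $x$ and every sufficiently small $r>0$. It therefore suffices to verify this Frostman-type estimate for each fixed $s<\liminf_{n\to\infty} \frac{\log(m_1\cdots m_{n-1})}{-\log(m_n\varepsilon_n)}$. Given such an $s$ and a small $r>0$, I would select the unique $n$ with $\varepsilon_{n+1}\leq 2r<\varepsilon_n$; this $n$ exists because $\varepsilon_n$ is strictly decreasing and tends to $0$ (the latter follows since each $(n-1)$-st level interval contains two $n$-th level intervals separated by $\varepsilon_n$, yet has length at most the maximal $(n-1)$-st level length, which tends to $0$).

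Because consecutive $n$-th level intervals are separated by at least $\varepsilon_n>2r$, the ball $B(x,r)$ meets at most one $n$-th level interval; inside it, by the separation $\varepsilon_{n+1}$ of its $m_{n+1}$ children, $B(x,r)$ meets at most $\min(m_{n+1},\,2r/\varepsilon_{n+1}+1)$ of the $(n+1)$-th level intervals. Hence the key counting bound is $\mu(B(x,r))\leq \min(m_{n+1},\,2r/\varepsilon_{n+1}+1)/(m_1\cdots m_{n+1})$. I would then split into two regimes according to whether $2r\geq m_{n+1}\varepsilon_{n+1}$ or $2r<m_{n+1}\varepsilon_{n+1}$. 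In the first regime, the bound collapses to $1/(m_1\cdots m_n)$, and combined with $r^s\geq (m_{n+1}\varepsilon_{n+1}/2)^s$ and the inequality $(m_{n+1}\varepsilon_{n+1})^{-s}\leq m_1\cdots m_n$ (valid for large $n$ by the choice of $s$, after shifting the liminf index) one obtains $\mu(B(x,r))/r^s = O(1)$. In the second regime the refined count yields $\mu(B(x,r)) = O(r/\varepsilon_{n+1})/(m_1\cdots m_{n+1})$; substituting the upper bound $r<m_{n+1}\varepsilon_{n+1}/2$ reduces the required estimate to the same logarithmic inequality.

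The main obstacle is identifying that the relevant scale comparison is $2r$ versus $m_{n+1}\varepsilon_{n+1}$; this is precisely what forces the quantity $-\log(m_n\varepsilon_n)$ to appear in the denominator of the liminf, and it is what ties the ``counting bound'' $m_{n+1}$ to the ``length bound'' $2r/\varepsilon_{n+1}$ in the right way. Once this scale is singled out, both regimes are handled by the same algebraic manipulation, and the conclusion $\dim_{\mathrm H}(C)\geq s$ follows by letting $s$ approach the liminf.
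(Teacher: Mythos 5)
Your proof is correct and follows the same classical mass-distribution approach as the cited source (Falconer's Example 4.6 / Fr\c{a}czek--Lema\'nczyk Section 6): construct the natural uniformly-distributed measure $\mu$ on the Cantor set, pick $n$ by the scale condition $\varepsilon_{n+1}\leq 2r<\varepsilon_n$, count how many $(n+1)$-st level intervals a ball of radius $r$ can meet, and apply the mass distribution principle. The case split at $2r\gtrless m_{n+1}\varepsilon_{n+1}$ and the index shift in the $\liminf$ are handled correctly, and the reduction to exactly $m_n$ children is legitimate since deleting intervals preserves all hypotheses.
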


\begin{proof}
This is proved in~\cite[Section~6]{Fr-Le}.
\end{proof}

\begin{proof}[Proof of Proposition~\ref{prop_hausdorff}]
Fix a monotone sequence $0<\delta_n\to0$.
To construct sets $C$ of the desired form we will take a  sparse sequence  $(k_n)$ (how sparse this sequence is depends on $(\delta_n)$) and have at stage $n-1$ a closed set $C_{n-1}$ consisting of the union of small neighbourhoods $[\frac{j-\delta_{n-1}}{2^{k_{n-1}}},\frac{j+\delta_{n-1}}{2^{k_{n-1}}}]$ of $\frac{j}{2^{k_{n-1}}}$ for {\em some} values of $j$. So the distance between the $n-1$-st level basic intervals  is at least $\frac1{2^{k_{n-1}}}(1-2\delta_{n-1})$.
Now, if $k_n$ is large enough, we form the family of $n$-th level basic intervals and hence $C_n$ by first partitioning each $n-1$-st level basic interval $I$ into many intervals of the form $[\frac{r}{2^{k_n}},\frac{r+1}{2^{k_n}}]$ (two of these intervals may overlap $I$ only partially) and then around each point $\frac{r}{2^{k_n}}$ choosing a small interval $[\frac{r-\delta_{n}}{2^{k_{n}}},\frac{r+\delta_{n}}{2^{k_{n}}}]\subset I$.

Note that for $x\in C_n$, we have $\|2^{k_n}x\|\leq \delta_n$.
Now, each $n-1$-st level basic interval contains at least
$$
\frac{\delta_{n-1}\frac1{2^{k_{n-1}}}}{\frac1{2^{k_n}}}=\delta_{n-1} 2^{k_n-k_{n-1}}\eqqcolon m_n$$
$n$-th level basic intervals. Moreover, the distance between any consecutive $n$-th level basic intervals is
$$
\geq \vep_n\coloneqq \frac1{2^{k_n}}(1-2\delta_n).$$
It follows that
$$
-\log(\vep_nm_n)=k_{n-1}-\log(\delta_{n-1}(1-2\delta_n))$$
and
$$
\log(m_1\cdots m_{n-1})=(k_{n-1}-1)+\sum_{j=1}^{n-1}\log\delta_{j-1}.$$

Now, Lemma~\ref{le_dimension} gives
$$
{\rm dim}_{\textnormal{H}}(C)\geq \liminf_{n\to\infty}\frac{(k_{n-1}-1)+\sum_{j=1}^{n-1}\log\delta_{j-1}}
{k_{n-1}-\log(\delta_{n-1}(1-2\delta_n))}=1$$
if $k_n$ is growing fast enough compared to $1/\delta_n$. The claim follows, since the sequence $(2^{k_n})$ certainly has bounded prime volume.
\end{proof}

\vspace{2ex}

\noindent
{\bf Acknowledgments}:
The results of the paper have been obtained in a series of discussions of the authors in the framework of the Special Year on Dynamics, Additive Number Theory and Algebraic Geometry at the Institute for Advanced Study (Fall 2022). The research of the second author was partially supported by  Narodowe Centrum Nauki grant UMO-2019/33/B/ST1/00364. The research of the fourth author was supported by a von Neumann Fellowship  and funding from European Union's Horizon
Europe research and innovation programme under Marie Sk\l{}odowska-Curie grant agreement No 101058904. The third and fourth author were supported by NSF grant \texttt{DMS-1926686}. We thank the anonymous referees for helpful comments. 

{Our special thanks go to Stanis\l aw Kasjan for showing us the proof of Lemma~\ref{lem:free_nilp_tor_free} and to one of the referees for providing the proof of Theorem~\ref{th:assum}}.

\end{document}